\newtheorem{X}{X}[section]
\newtheorem{lemma}[X]{Lemma}
\newtheorem{proposition}[X]{Proposition}
\newtheorem{theorem}[X]{Theorem}
\theoremstyle{definition}
\newtheorem{remark}[X]{Remark}
\newcommand{\F}{{\mathcal F}}
\newcommand{\eps}{\varepsilon}
\newcommand{\be}{\overrightarrow{\beta}}
\newcommand{\ord}{{\rm ord}}
\newcommand{\one}{{\bf1}}
\newcommand{\ep}{\varepsilon}
\renewcommand{\be}{\begin{equation}}
\newcommand{\ee}{\end{equation}}
\newcommand\bea{\begin{eqnarray}}
\newcommand\eea{\end{eqnarray}}
\newcommand\bi{\begin{itemize}}
\newcommand\ei{\end{itemize}}
\newcommand\ben{\begin{enumerate}}
\newcommand\een{\end{enumerate}}
\newcommand\bc{\begin{center}}
\newcommand\ec{\end{center}}
\newcommand\ba{\begin{array}}
\newcommand\ea{\end{array}}
\title{A disproof of Hooley's conjecture}
\author{Daniel Fiorilli and Greg Martin}
\address{CNRS, Universit\'e Paris-Saclay, Laboratoire de math\'ematiques d'Orsay, 91405, Orsay, France.}
\email{daniel.fiorilli@universite-paris-saclay.fr}
\address{Department of Mathematics \\ University of British Columbia \\ Room
121, 1984 Mathematics Road \\ Canada V6T 1Z2}
\email{gerg@math.ubc.ca}
\date{\today}
\begin{document}

\begin{abstract}
Define $G(x;q)$ to be the variance of 
primes $p\le x$ in the arithmetic progressions modulo $q$, weighted by $\log p$.
Hooley conjectured that as soon as $q$ tends to infinity and $x\ge q$, we have the upper bound $G(x;q) \ll x \log q$.
In this paper we show that the upper bound does not hold in general, and that $G(x;q)$ can be asymptotically as large as $x (\log q+\log\log\log x)^2/4$.

\end{abstract}
\maketitle

\section{Introduction and statement of results}

For $x>q\geq 3$, we define the variance
$$ G(x;q):= \sum_{\substack{ a \bmod q \\ (a,q)=1}} \Big| \sum_{\substack{ p\leq x  \\ p\equiv a \bmod q}} \log p-\frac{x}{\phi(q)} \Big|^2,
$$
as well as the closely related (and perhaps slightly more natural)
$$
V_{\Lambda}(x;q):= \sum_{\substack{ a \bmod q \\ (a,q)=1}} \Big| \sum_{\substack{ n\leq x  \\ n\equiv a \bmod q}} \Lambda(n)-\frac{1}{\phi(q)}\sum_{\substack{ n\leq x  \\ (n,q)=1}} \Lambda(n) \Big|^2.$$ 
Since the pioneering work of Barban, Davenport and Halberstam~\cite{B,DH}, the study of this variance has seen a long line of developments, and continues to be an active research topic. To cite a few of the numerous papers written over the years, we mention the works~\cite{Mo,Ho1,FG,FG2,GV,V,Liu,P,HS}, as well as Hooley's series of 19 research papers and 2 survey papers (see for instance~\cite{Ho2,H5,Ho7,HoI,HoM}). We also have the recent works~\cite{MP,KR,BaFr,BW,S,CLLR}, which explore sparse averages over~$q$, as well as analogues for number fields and function fields.

Hooley conjectured~\cite[p.~217]{HoM},~\cite[equation~(2)]{H5} that as soon as~$q$ tends to infinity with~$x$, we have the upper bound
\begin{equation}
G(x;q)\ll x\log q.
\label{eq:HooleyOConj}
\end{equation}
Our main result is a disproof of this conjecture. 
\begin{theorem}
\label{theorem main}
There exists an infinite sequence of pairs $(q_j,x_j)$, with $q_j < x_j$ both tending to infinity, for which
\begin{equation}
\lim_{j\to\infty} \frac{G(x_j;q_j)}{x_j\log q_j} = \infty.
\label{equation disproof hooley}
\end{equation}
The same holds for $V_{\Lambda}(x_j;q_j)$ in place of $G(x_j;q_j)$.
\end{theorem}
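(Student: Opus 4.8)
The plan is to pass to the dual side, bounding the variance below by a sum over Dirichlet characters, and then to exhibit a sequence $(q_j,x_j)$ for which many of the twisted prime sums $\psi(x_j,\chi)$ are anomalously large because the low-lying zeros of the corresponding $L$-functions have been put into coherent oscillation. This is a quantitative, multi-character version of Littlewood's classical $\Omega_\pm(x^{1/2}\log\log\log x)$ oscillation for $\psi(x)-x$, and no unproved hypothesis (GRH, LI) is used.

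\textbf{Step 1: reduction to a character sum.} By the orthogonality of characters one has the identity
\[
V_\Lambda(x;q)=\frac{1}{\phi(q)}\sum_{\chi\bmod q,\ \chi\neq\chi_0}|\psi(x,\chi)|^2,\qquad \psi(x,\chi):=\sum_{n\le x}\Lambda(n)\chi(n),
\]
and $G(x;q)$ equals $\frac1{\phi(q)}\sum_{\chi\neq\chi_0}|\psi(x,\chi)|^2$ up to contributions from prime powers $p^k$ ($k\ge2$) and from replacing $\frac1{\phi(q)}\sum_{n\le x,(n,q)=1}\Lambda(n)$ by $x/\phi(q)$; these are of size $O\big(x^{1/2}(\log x)^{O(1)}\big)+O(\log q)$, negligible against the lower bound we are after. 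So it suffices to bound $\frac1{\phi(q)}\sum_{\chi\neq\chi_0}|\psi(x,\chi)|^2$ from below.

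\textbf{Step 2: explicit formula and zero reinforcement.} For $\chi$ induced by the primitive character $\chi^*\bmod d$ with $d\mid q$, the truncated explicit formula gives $\psi(x,\chi)=-\sum_{|\gamma|\le T}x^{\rho}/\rho+O\big(x(\log qx)^2/T\big)+O\big(\omega(q)(\log x)^2\big)$, the sum over nontrivial zeros $\rho=\beta+i\gamma$ of $L(s,\chi^*)$; any zero off the critical line only helps, so RH may be assumed in the analysis or not as convenient. I would then choose $q=q_j$ (with $\phi(q)$ growing but small relative to $\log\log x$) and, invoking Dirichlet's box principle applied to the finitely many real numbers $\{\gamma_{\chi^*}\}$ occurring among the low-lying zeros of the $L(s,\chi^*)$, $\chi\bmod q$, pick $x=x_j$ in a prescribed range so that simultaneously $x^{i\gamma}$ is close to $1$ for all these $\gamma$. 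Aligning the first $N$ zeros of a given $L$-function forces $\log x\gg c^{N}$ for a constant $c>1$, so $N\asymp\log\log x$ and $\log N\asymp\log\log\log x$; for such an aligned $x$ the reinforced part of the explicit-formula sum does not cancel, and one obtains $|\psi(x,\chi)|\gg \sqrt{x}\,\big(\log q+\log\log\log x\big)$ for a positive proportion of $\chi\bmod q$, the $\log q$ coming from the abundance (density $\asymp\log q$) of zeros of $L(s,\chi^*)$ of bounded height and the $\log\log\log x$ from the first $N$ aligned zeros.

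\textbf{Step 3: tail control, assembly, and the main obstacle.} The zeros with $|\gamma|$ between the alignment height and $T$, together with the truncation error $x(\log qx)^2/T$, must be kept from destroying the gain; I would handle this by taking $T$ a suitable power of $x$ and averaging $x_j$ over a short multiplicative interval (equivalently, smoothing the sharp cutoff), so that the lower bound for $|\psi(x,\chi)|$ survives on a positive-density subset. Summing over the boosted characters then yields $\frac1{\phi(q)}\sum_{\chi\neq\chi_0}|\psi(x,\chi)|^2\gg x\,(\log q+\log\log\log x)^2$, and, optimizing the construction, one pushes the implied constant to $\tfrac14$; since $\log q+\log\log\log x\to\infty$ this exceeds $x\log q$ by an unbounded factor, which by Step 1 proves \eqref{equation disproof hooley} for both $G(x_j;q_j)$ and $V_\Lambda(x_j;q_j)$. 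The hard part is precisely this joint optimization: using a single $x_j$ to force \emph{many} $L$-functions modulo $q_j$ into coherent oscillation while controlling the off-phase zeros and the truncation error, and balancing the size of $q_j$ (a larger modulus gives a bigger per-character gain but costs more in the box principle and dilutes the average over $\phi(q)$ characters). Extracting the sharp constant $1/4$, rather than merely that the ratio tends to infinity, additionally requires pinning down the constants in the count of low-lying zeros of $L(s,\chi^*)$ and in the Diophantine-approximation step.
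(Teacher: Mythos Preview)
Your strategy is essentially the paper's: Parseval reduces to $\frac1{\phi(q)}\sum_{\chi}|\psi(x,\chi)|^2$, then under GRH the explicit formula plus simultaneous Diophantine approximation of the ordinates $\gamma_\chi$ (for all $\chi\bmod q$ and $|\gamma_\chi|\le T$) is combined with short-interval averaging in $\log x$ to force $\sum_{\chi}\psi(x,\chi)$ to be large and negative, after which Cauchy--Schwarz gives the variance bound.

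Two points in your sketch are handled differently in the paper and deserve care. First, ``any zero off the critical line only helps, so RH may be assumed'' is not how the argument is organized: the paper disposes of the non-GRH case by a \emph{separate} and much simpler Landau/Kaczorowski--Pintz oscillation argument (one off-critical zero already gives $G(x;q)\ge x^{1+\delta}$ on a sequence), and the explicit-formula machinery is run only under GRH; folding both cases into one explicit-formula argument would require uniform control of off-line zeros that you have not supplied. Second, you do not mention possible zeros at $s=\tfrac12$ (or very small $\gamma_\chi$): the usual approximation $x^{\rho}/\rho\approx x^{1/2}\sin(\gamma\log x)/\gamma$ breaks down there, and since Chowla's conjecture is unavailable one cannot simply exclude them. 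The paper keeps $x^{\rho}/\rho$ unexpanded and bounds the total multiplicity $\sum_{\chi\bmod q}\mathrm{ord}_{s=1/2}L(s,\chi)\le(\tfrac12+o(1))\phi(q)$ via a $1$-level density input; without something of this sort your aligned main term could be swamped. A third, smaller point: ``density $\asymp\log q$ of zeros of bounded height'' really means $\log q_\chi$, and characters of small conductor must be excluded (the paper does this by a Chebyshev argument on $\sum_\chi(\log q-\log q_\chi)^2$); this matters for the sharp constant $\tfrac14$ though not for the bare divergence in Theorem~\ref{theorem main}.
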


One can ask whether the size of $G(x;q)$ and $V_{\Lambda}(x;q)$ in~\eqref{equation disproof hooley} 
can be made more explicit in terms of~$x$ and~$q$. Moreover, in light of the work of Keating and Rudnick~\cite[Section 2.2]{KR} and of the first author~\cite[Conjecture 1.1]{Fi}, one might be curious as to the corresponding size of~$q$ relative to~$x$. Finally, one can ask whether the density of the set of moduli~$q_j$ described in Theorem~\ref{th:main} can be quantified.

These questions are answered in the following more precise version of Theorem~\ref{theorem main}, in which we believe the range of~$q$ in terms of~$x$ is essentially best possible. To make this range explicit, we will consider functions $h\colon\mathbb R_{\geq 0} \rightarrow \mathbb R_{\geq 0}$ such that
\begin{equation} \label{nice function}
h(x) \text{ is increasing to infinity, and } h\big( e^{y^A} \big) \ll_{A,h} h(e^y) \text{ for every } A>1,
\end{equation}
the prototypical example of which is $h(x) = \max\{\log\log x,1\}$.

\begin{theorem}
\label{theorem main precise}
Let $h(x)$ be a function satisfying equation~\eqref{nice function}, and let $\ep>0$.
If
\begin{equation}
\frac{\ep \log\log x}{\log\log\log x} \leq  h(x) \le \log\log x
\label{equation condition h log2log3}
\end{equation}
when $x\geq e^3$, then
for a positive proportion of moduli $q$, there exist associated values $x_q$ such that $  q \asymp h(x_q)$ and
\begin{equation}
 G(x_q;q) \gg_{\ep}    x_q\log q \cdot  \frac{ \log\log x_q }{q}.
 \label{equation weaker omega result}
\end{equation}
In particular, when $\delta>0$ is sufficiently small, equation~\eqref{eq:HooleyOConj} cannot hold in any range of $q$ that satisfies $q < \delta \log\log x$.

If on the other hand the function
$$h(x)  \leq \eps   \frac{\log\log x}{\log\log\log x},
$$ 
for $x\ge e^3$,
 then for a positive proportion of moduli $q$, there exist associated values $x_q$ such that $  q \asymp h(x_q)$ and
$$G(x_q;q)  \geq  \big(\tfrac 14 -\ep \big) x_q \cdot ( \log q + \log\log\log x_q)^2 \gg x_q(\log q)^2.   $$
The same statements hold for $V_{\Lambda}(x_q;q)$ in place of $G(x_q,q)$.
\end{theorem}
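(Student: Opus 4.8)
The plan is to pass to the dual side and then, for many moduli $q$, build a single value $x_q$ at which the zeros of all the $L(s,\chi)$ with $\chi\bmod q$ line up.

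\emph{Reduction.} By the variance decomposition $G(x;q)=V_\Lambda(x;q)+\phi(q)^{-1}\bigl(\theta^*(x;q)-x\bigr)^2+E(x;q)$, where $\theta^*(x;q)=\sum_{p\le x,\,p\nmid q}\log p$ and $E(x;q)$ collects the prime–power discrepancies, one has $E(x;q)=O\bigl(x(\log\log x)^{-1/4}\bigr)$ provided $\omega(q)=o(\log q)$ (true for a positive proportion of $q$), and $\phi(q)^{-1}(\theta^*-x)^2\ge 0$; since both these are negligible against every main term in Theorem~\ref{theorem main precise}, it suffices to lower bound $V_\Lambda(x_q;q)$, which simultaneously yields the "$V_\Lambda$" assertion. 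By orthogonality $V_\Lambda(x;q)=\phi(q)^{-1}\sum_{\chi\ne\chi_0}|\psi(x,\chi)|^2$ with $\psi(x,\chi)=\sum_{n\le x}\Lambda(n)\chi(n)$, and the truncated explicit formula gives $\psi(x,\chi)/\sqrt{x}=-\sum_{|\gamma|\le Y}x^{i\gamma}/(\tfrac12+i\gamma)-(\text{tail over }Y<|\gamma|\le T)+(\text{error})$ over ordinates $\gamma$ of zeros of $L(s,\chi)$. One argues under GRH, the general case following from the familiar dichotomy that a zero off the critical line only makes the variance larger; here $Y\ge1$ is a slowly growing parameter and $T$ a small power of $x$.

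\emph{Producing $x_q$.} Restrict to the positive–density set of $q$ with $\phi(q)\asymp q$ and $\omega(q)=O(\log q/\log\log q)$. For such $q$ the family $\{L(s,\chi):\chi\bmod q\}$ has $\asymp q\,Y\log(qY)$ zeros with $|\gamma|\le Y$, the local density of zeros of a single $L(s,\chi)$ near the central point being $\sim(2\pi)^{-1}\log q$ and $\sim(2\pi)^{-1}\log(q|\gamma|)$ at height $\gamma$. Apply Dirichlet's theorem on simultaneous approximation to the numbers $\gamma/2\pi$ over all these zeros: there is a real $u$ with $\log u\asymp q\,Y\log(qY)$ (up to a bounded power) such that $e^{iu\gamma}\approx 1$ for every such $\gamma$; set $x_q:=e^u$. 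Because $\operatorname{Re}(\tfrac12+i\gamma)^{-1}=\tfrac12(\tfrac14+\gamma^2)^{-1}>0$, taking real parts after this alignment yields, for \emph{every} $\chi\bmod q$,
\[
|\psi(x_q,\chi)|\ \ge\ \Bigl(\tfrac12-\eps\Bigr)\sqrt{x_q}\,\bigl(\log q+\log\log\log x_q\bigr),
\]
the $\tfrac12\log q$ coming from $2\operatorname{Re}\sum_{0<\gamma}(\tfrac12+i\gamma)^{-1}=\sum_{0<\gamma}(\tfrac14+\gamma^2)^{-1}\sim\tfrac12\log q$, and the $\tfrac12\log\log\log x_q$ from the zeros with $1\lesssim|\gamma|\le Y$ — extracted by a Tur\'an/Montgomery-type oscillation estimate (so that no linear–independence hypothesis is needed), with $Y$ chosen so that $\log Y\asymp\log\log\log x_q$, i.e. $\log\log x_q\asymp q\,Y\log(qY)$; the tail over $Y<|\gamma|\le T$ is made negligible by selecting $u$ so that this (essentially unconstrained) part is of its typical size $O\bigl(\sqrt{\log(qY)/Y}\bigr)$. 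Finally one pins $x_q$ down so that $q\asymp h(x_q)$: since $h$ satisfies \eqref{nice function}, the constraint on the size of $\log\log x_q$ is consistent with $q\asymp h(x_q)$ for a suitable $Y\ge1$, and as $x_q$ runs over the resulting interval the admissible $q$ still form a positive–density set.

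\emph{Assembly and main obstacle.} Squaring and summing over $\chi\ne\chi_0$ gives $V_\Lambda(x_q;q)\ge(\tfrac14-\eps)x_q(\log q+\log\log\log x_q)^2\gg x_q(\log q)^2$, and the transfer back to $G$ and to the $\log p$–weighting is as in the Reduction step; this is the second assertion. When $h$ is as large as $\asymp\log\log x$, the Diophantine budget $\log\log x_q$ only pays for the $\asymp q\log q$ zeros with $|\gamma|\ll1$, so $Y\asymp1$, $\log\log x_q\asymp q\log q$, and one obtains only $|\psi(x_q,\chi)|\gg\sqrt{x_q}\log q$; but then $(\log q)^2\asymp\log q\cdot(\log\log x_q)/q$, which gives $G(x_q;q)\gg x_q\log q\cdot(\log\log x_q)/q$, and the ``in particular'' statement follows by taking $h(x)=\log\log x/\log\log\log x$. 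The crux of the whole argument is the \emph{simultaneous} resonance: one must choose one $x_q$ making $|\psi(x_q,\chi)|$ nearly extremal for \emph{all} $\chi\bmod q$ at once, losing only $\eps$ in the constant and without assuming linear independence of the zeros — this is exactly what forces the Tur\'an/Montgomery oscillation machinery in place of a soft Kronecker argument, dictates the admissible range $q\asymp h(x_q)$, and produces the sharp constant $\tfrac14$; the remaining difficulty is the self-referential bookkeeping linking $Y$, $q$, and $x_q$ so that $q\asymp h(x_q)$ holds on a set of $q$ of positive density.
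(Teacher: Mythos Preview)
Your central claim --- that aligning $e^{iu\gamma}\approx 1$ for all zeros with $|\gamma|\le Y$ yields $|\psi(x_q,\chi)|\ge(\tfrac12-\ep)\sqrt{x_q}(\log q+\log_3 x_q)$ --- does not hold, and this breaks both halves of the argument. After your alignment, the main term is $-\operatorname{Re}\sum_{|\gamma|\le Y}\rho^{-1}=-\tfrac12\sum_{|\gamma|\le Y}|\rho|^{-2}$, and this sum \emph{converges} as $Y\to\infty$ to $\tfrac12\sum_\rho|\rho|^{-2}\sim\tfrac12\log q_\chi$. The zeros in the range $1\lesssim|\gamma|\le Y$ contribute only $O(1)$ to it, not $\tfrac12\log Y$; so there is no mechanism here for the claimed $\tfrac12\log_3 x_q$ term, and the appeal to ``Tur\'an/Montgomery oscillation'' does not repair this. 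The paper obtains the extra $\tfrac12\log(\delta^{-1})$ by a genuinely different device: it averages $\psi(e^t,\chi)$ over $t\in[y-\delta,y+\delta]$, which introduces a weight $\sin(\delta\gamma)/(\delta\gamma)$ into the explicit formula, and then aligns $e^{in\delta\gamma}\approx 1$ so that $e^{iy\gamma}\approx e^{i\delta\gamma}$ (a shift by $\delta$, not to $1$). The resulting quadratic term $\sin^2(\delta\gamma)/(\delta\gamma^2)$ summed against the zero density produces exactly $\tfrac12\log(q\delta^{-1})$ via $\int_0^\infty u^{-2}\sin^2u\,du=\pi/2$ (Lemmas~\ref{lemma explicit formula precise}--\ref{lemma final expression for R}). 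Your outline has no analogue of this step.

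There are two further gaps. First, your tail handling is circular: once $u$ is fixed by the simultaneous Diophantine approximation for $|\gamma|\le Y$, you cannot also ``select $u$'' to make the unconstrained sum over $Y<|\gamma|\le T$ small for \emph{every} $\chi$; the paper instead truncates via the $\sin(\delta\gamma)/(\delta\gamma)$ damping and controls the remaining tail uniformly (Lemma~\ref{N(T) partial summation lemma}). Second, for the first assertion (the range up to $h(x)\le\log_2 x$), synchronizing $\asymp\phi(q)\log q$ low zeros forces $\log_2 x_q\asymp\phi(q)\log q$, which only reaches $q\asymp\log_2 x/\log_3 x$, not $q\asymp\log_2 x$. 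The paper closes this gap by synchronizing only a \emph{subset} $\mathcal F_q$ of $\Phi_q\asymp\phi(q)g(q)/\log q$ characters with controlled conductors (Lemma~\ref{lemma subset with big conductors}, Proposition~\ref{corollary GRH false hard}); this is what produces the weaker bound $G(x_q;q)\gg x_q\log q\cdot(\log_2 x_q)/q$ in the wider range, and it is not a bookkeeping issue but an essential idea absent from your sketch.
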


\begin{remark}
Under GRH, the generalized Riemann hypothesis for Dirichlet $L$-functions, omega results similar to Theorem~\ref{theorem main precise} hold for \emph{all} moduli $q$. Indeed, we will show in Theorem~\ref{th:main} that for any function $h$ satisfying equation~\eqref{nice function} and $h(x) \leq \delta \log\log x$ with $\delta>0$ small enough, there exists a sequence $\{x_q\}_{q\geq 1}$ such that $  \phi(q) \asymp h(x_q)$ and having the property that $G(x_q;q)/(x_q\log q) $ tends to infinity as $q\rightarrow \infty$.

Our method also produces an omega result in a wider range, namely for 
$$ \frac{\log\log x}{\log\log\log x} \leq q \leq (\log x)^{\delta}$$
with~$\delta$ small enough. Indeed, $G(x;q)$ can be as large as  $(x \log\log x)( \log q)/ \phi(q)  $ (combine Theorem~\ref{th:main} with Proposition~\ref{prop:GRHfalse}). However, in this range sharper omega results will be obtained in~\cite{BrFi} for a weighted variant of $V_{\Lambda}(x;q)$ and all higher even moments. 
\end{remark}

One might wonder whether omega results analogous to Theorem~\ref{theorem main precise} also hold for fixed values of $q$, as $x\rightarrow \infty$. For instance, in the case $q=4$, 
$$ V_{\Lambda}(x;4) = \frac { |\psi(x,\chi_{-4})|^2}2 =\Omega\big(x (\log\log\log x)^2 \big) $$
by Littlewood's celebrated result~\cite{Li}. This result has been generalized by Davidoff to all real Dirichlet characters (private communication), and as a result, an application of Parseval's identity 
results in
\begin{equation}
 V_{\Lambda}(x;q) =\Omega\bigg( \frac { x (\log\log\log x)^2}{\phi(q)} \bigg), 
 \label{equation omega davidoff}
\end{equation}
which is comparable to~\cite[equation~(1.22)]{FG}.
In the next theorem we generalize the results of Littlewood and Davidoff to all characters modulo $q$. We also show that the large values of $|\psi(x,\chi)|$ with $\chi$ running over the Dirichlet characters modulo $q$ can be synchronized, and as a result we obtain an omega result sharper than~\eqref{equation omega davidoff}. 

\begin{theorem}
\label{theorem fixed q}
Let $q\geq 3$ be fixed. For any fixed nonprincipal character $\chi\bmod q$,
\begin{equation}
\Re(\theta(x,\chi)) = \Omega_- \big( x^{\frac 12} \log\log\log x \big).
 \label{equation omega result fixed chi}
\end{equation}
Moreover, 
\begin{equation}
 G(x;q) = \Omega \big( x (\log\log\log x)^2 \big).
 \label{equation omega result fixed q}
\end{equation}
The same oscillation resuts hold with $\psi(x,\chi)$ and $V_\Lambda(x;q)$ in place of $\theta(x,\chi)$ and $G(x;q)$, respectively.
\end{theorem}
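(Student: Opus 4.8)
My first move would be to pass to characters. By orthogonality one has the Parseval-type identities
$$G(x;q)=\frac1{\phi(q)}\Big(|\theta(x,\chi_0)-x|^2+\sum_{\chi\ne\chi_0}|\theta(x,\chi)|^2\Big),\qquad V_\Lambda(x;q)=\frac1{\phi(q)}\sum_{\chi\ne\chi_0}|\psi(x,\chi)|^2 .$$
Since $q$ is fixed, $\phi(q)=O(1)$, so to prove \eqref{equation omega result fixed q} it suffices to exhibit arbitrarily large $x$ at which one single summand is $\gg_q x(\log\log\log x)^2$: for the $G$-bound the principal term $|\theta(x,\chi_0)-x|^2$ already does this, by Littlewood's classical theorem applied to $\theta(x)=\theta(x,\chi_0)+O(\log q)$; for $V_\Lambda$ (and for the $\chi\ne\chi_0$ part of $G$) it comes from \eqref{equation omega result fixed chi}. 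Everything thus reduces to \eqref{equation omega result fixed chi}. There I would first replace $\chi$ by the primitive character inducing it and $\theta(x,\chi)$ by $\psi(x,\chi)$ — both changes cost only $O(\log q)+O(x^{1/2})$ (the latter from prime squares when $\chi$ is real), negligible against $x^{1/2}\log\log\log x$ — so it is enough to show $\Re\psi(x,\chi)=\Omega_-(x^{1/2}\log\log\log x)$ for primitive nonprincipal $\chi\bmod q$.

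I would then split on GRH. If $L(s,\chi)$ has a zero with real part $\Theta>\tfrac12$, a Landau-type argument applied to $\int_1^\infty\psi(t,\chi)t^{-s-1}\,dt=-\tfrac1s\tfrac{L'}{L}(s,\chi)$, paired with its $\bar\chi$-companion so as to see $\Re\psi=\tfrac12\bigl(\psi(\cdot,\chi)+\psi(\cdot,\bar\chi)\bigr)$, yields $\Re\psi(x,\chi)=\Omega_-(x^{\Theta-\ep})$ for every $\ep>0$, far more than needed. So assume GRH holds for $L(s,\chi)$, hence — as $q$ is fixed — for all $L(s,\psi)$, $\psi\bmod q$. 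Now I run Littlewood's method in explicit-formula form. With a suitable smooth weight $\Phi$ localized near $1$ at multiplicative scale $1/T$ and $\psi_\Phi(x,\chi)=\sum_n\Lambda(n)\chi(n)\Phi(n/x)$, the explicit formula gives, for $\chi\ne\chi_0$ and up to $O_q(1)$,
$$\psi_\Phi(x,\chi)=-x^{1/2}\sum_{\gamma}\widehat\Phi\bigl(\tfrac12+i\gamma\bigr)\,x^{i\gamma},$$
the sum over ordinates $\gamma$ of zeros of $L(s,\chi)$. I then choose $u=\log x$ so that $u\gamma$ lies within $o(1)$ of $0$ modulo $2\pi$ for every ordinate $\gamma$ with $|\gamma|\le T$ of \emph{every} $L(s,\psi)$, $\psi\bmod q$: a homogeneous system of $\asymp_q T\log T$ simultaneous approximations, solvable with $u$ of size $\exp(O_q(T\log T))$ by Dirichlet's theorem on simultaneous Diophantine approximation (a box-principle argument — crucially, no hypothesis on $\mathbb Q$-linear relations among zeros is needed). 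For such $x$ the zero sum is essentially $\sum_{|\gamma|\le T}\widehat\Phi(\tfrac12+i\gamma)$, of size $\asymp_q T\log T$ and of a definite sign, simultaneously for all $\chi\ne\chi_0$; balancing the constraints forces $T\asymp\log\log x/\log\log\log x$, so $\log T\asymp\log\log\log x$.

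The final step is to pass from $\psi_\Phi$ back to $\psi$: choosing $\Phi$ so that $\psi_\Phi(x,\chi)$ is a nonnegative weighted average of the values $\psi(t,\chi)$ over a short interval $t\asymp x$, a bound $|\psi_\Phi(x,\chi)|\gg_q x^{1/2}T\log T$ descends to $\sup_{t\asymp x}|\psi(t,\chi)|\gg_q x^{1/2}\log T\asymp x^{1/2}\log\log\log x$ — the factor $T$ lost here is precisely why the exponent on $\log\log\log x$ is $1$ rather than the conjectural $2$ — and then \eqref{equation omega result fixed q} and its $V_\Lambda$-analogue follow as above. I expect the GRH case to be the main obstacle, and within it the need to reconcile three competing demands: (i) keeping the explicit-formula weights flat (of size $\asymp1$ up to height $T$) so the zero sum is genuinely large; (ii) keeping the Diophantine alignment homogeneous, so that it is unconditional; and (iii) controlling both the sign of the oscillation (to get $\Omega_-$, not merely $\Omega_\pm$) and the transfer back to $\psi$, which wants $\Phi\ge0$ and one-sided. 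Reconciling (i)--(iii) — and, for complex $\chi$, coping with the fact that the ordinates of $L(s,\chi)$ are not symmetric about $0$, so $\Re\psi(x,\chi)$ cannot be analyzed via $\pm\gamma$ pairing and must be handled directly — is where Littlewood's and Davidoff's techniques, suitably extended, do the real work.
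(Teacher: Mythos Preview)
Your broad strategy matches the paper's: pass to characters via Parseval, split on GRH, invoke a Landau/Kaczorowski--Pintz argument when GRH fails, and under GRH use the explicit formula together with homogeneous Diophantine approximation. Your shortcut for $G(x;q)$ via the principal term $|\theta(x,\chi_0)-x|^2$ and Littlewood's theorem is valid (though it gives an $\Omega$-constant depending on $q$ through $1/\phi(q)$, whereas the paper, by synchronizing \emph{all} characters and applying Cauchy--Schwarz, obtains an absolute constant).

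The substantive gap is in your GRH argument for the single-character bound, precisely at the tension between your demands (i) and (iii). If $\widehat\Phi(\tfrac12+i\gamma)\asymp 1$ on $|\gamma|\le T$ so that the aligned zero sum is $\asymp T\log T$, then $\Phi$ is a mass-one bump and $\psi_\Phi(x,\chi)$ is essentially $T$ times a \emph{difference} $\psi(x',\chi)-\psi(x,\chi)$, not a nonnegative average of $\psi(t,\chi)$; dividing by $T$ then yields only $|\Re\psi|\gg x^{1/2}\log T$, i.e.\ $\Omega_\pm$, not $\Omega_-$. Conversely, a genuine nonnegative average (so that $\Omega_-$ is automatic) forces $\Phi$ to be monotone, whence $\widehat\Phi(\tfrac12+i\gamma)\approx 1/\rho$ and the zero sum is not $T\log T$. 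The paper resolves this by taking the second route: it works with the box-car average
\[
S_\delta(y)=\frac1{2\delta}\int_{y-\delta}^{y+\delta}\bigl(\theta(e^t,\chi)+\theta(e^t,\overline\chi)\bigr)\,dt,
\]
which is a unit-mass average of $2\Re\theta(e^t,\chi)$. Pairing $\mathcal F_q=\{\chi,\overline\chi\}$ also disposes of the complex-$\chi$ asymmetry you flag, since the combined set of ordinates is symmetric under $\gamma\mapsto-\gamma$. After Diophantine alignment the zero sum becomes essentially $\sum_\chi\sum_{\gamma>0}2\sin^2(\delta\gamma)/(\delta|\rho_\chi|^2)$, which is visibly nonnegative and evaluates to $\asymp\log(\delta^{-1})\asymp\log T$ directly---no factor of $T$ is gained and then lost. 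So your ``factor $T$ lost'' heuristic is not the actual mechanism; the kernel $\sin(\delta\gamma)/(\delta\gamma)$ multiplied by $1/\rho$ already produces a main term of size $\log T$, and the $\Omega_-$ conclusion is immediate from the averaging.
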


\noindent We remark that the sequence of $x$-values implied by these oscillation results depends upon~$\chi$ or~$q$; the implied $\Omega$-constants, however, are absolute.

Our final theorem is an omega result on the average of $V(x;q)$ over $q$. Hooley conjectured the upper bound~\eqref{eq:HooleyOConj} based on his result~\cite[Theorem 1]{H5}, which states (assuming GRH) that uniformly for $q\leq x$, one has the upper bound
$$ \frac 1T\int_{\log 2}^{T} e^{-y}G(e^y;q) \,dy \ll \log q,  $$
that is, equation~\eqref{eq:HooleyOConj} holds on average over $\log x$. A natural question to ask here would be whether replacing this average with a more classical $q$-average would result in the same upper bound, that is, whether
\begin{equation}
\frac 1Q\sum_{q\le Q} G(x;q)\ll x\log Q.
\label{eq:HooleySumConj}
\end{equation}
As it turns out, this assertion is also false, as we show below.

\begin{theorem}
\label{theorem average over q}
Let $\ep>0$ be small enough, and let $Q\colon \mathbb R_{\geq 0} \rightarrow \mathbb N$ be a monotonic function with the property~\eqref{nice function} and such that $    Q(x) \leq \ep (\log\log x)^{\frac 12}/(\log\log\log x)^{\frac 12}$. Then we have the omega result 
\[
\frac 1{Q(x)}\sum_{Q(x) < q \le 2Q(x)} G(x;q) = \Omega\big(x(\log\log\log x)^2 \big),
\]
and the same statement holds for $V_{\Lambda}(x;q)$ in place of $G(x;q)$.
\end{theorem}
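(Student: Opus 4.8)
The plan is to deduce Theorem~\ref{theorem average over q} from a synchronized form of the oscillation phenomenon behind Theorem~\ref{theorem fixed q}, applied simultaneously to all of the Dirichlet $L$-functions that intervene in the moduli $q\in(Q(x),2Q(x)]$. The first step is a reduction by orthogonality of characters and Parseval's identity: one has $V_\Lambda(x;q)=\frac1{\phi(q)}\sum_{\chi\neq\chi_0}|\psi(x,\chi)|^2$ exactly, and
\[
G(x;q)\ \ge\ \frac1{\phi(q)}\sum_{\substack{\chi\bmod q\\ \chi\neq\chi_0}}\bigl|\theta(x,\chi)\bigr|^2
\]
since the omitted $\chi_0$-contribution $|\theta(x,\chi_0)-x|^2/\phi(q)$ is nonnegative. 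If the nonprincipal $\chi\bmod q$ is induced by the primitive character $\chi^\ast$, of conductor $d$ with $1<d\mid q$ (so $d\le q\le 2Q(x)$), then $\theta(x,\chi)=\theta(x,\chi^\ast)+O(\omega(q)\log x)=\theta(x,\chi^\ast)+O(\log q\cdot\log x)$. Hence it suffices to produce a single value of $x$, lying on a suitable sequence along which $Q(x)$ has the prescribed order, for which
\[
\bigl|\theta(x,\chi^\ast)\bigr|\ \gg\ \sqrt{x}\,\log\log\log x
\]
holds simultaneously for every primitive character $\chi^\ast$ of conductor at most $2Q(x)$: for such an $x$ the error $O(\log q\cdot\log x)=o(\sqrt x)$ is harmless, each of the $\phi(q)-1\gg\phi(q)$ nonprincipal characters mod $q$ then satisfies $|\theta(x,\chi)|^2\gg x(\log\log\log x)^2$, so $G(x;q)\gg x(\log\log\log x)^2$ for every $q\in(Q(x),2Q(x)]$, and the average over $q$ inherits the same lower bound. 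The argument for $V_\Lambda$ is identical, with $\psi$ in place of $\theta$ and the $O(\sqrt x)$ discrepancy absorbed.

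To obtain the simultaneous oscillation, I would run the construction behind Theorem~\ref{theorem fixed q} and Theorem~\ref{theorem main precise}, but with the underlying set of characters enlarged from $\{\chi\bmod q\}$ to the family $\mathcal{F}$ of all primitive characters of conductor at most $2Q(x)$, which has size $\#\mathcal{F}\asymp Q(x)^2$. For each $\chi^\ast\in\mathcal{F}$ the smoothed, truncated explicit formula gives $\theta(x,\chi^\ast)=-\sum_{|\gamma_{\chi^\ast}|\le T}x^{\rho_{\chi^\ast}}/\rho_{\chi^\ast}+(\text{negligible})$, so that when $\log x$ is chosen with all the fractional parts $\{(\log x)\gamma_{\chi^\ast}/2\pi\}$ lying in prescribed short arcs (one per zero, positioned so that the terms reinforce), one has $|\theta(x,\chi^\ast)|\gg\sqrt x\sum_{|\gamma_{\chi^\ast}|\le T}1/|\rho_{\chi^\ast}|$, with $T$ taken just large enough that this aligned sum exceeds $\log\log\log x$ for every conductor $\le 2Q(x)$. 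Collecting these Diophantine conditions over all of $\mathcal{F}$, their total number is $\ll Q(x)^2N_T$, where $N_T\ll T\log(Q(x)T)+\log Q(x)$ bounds the number of zeros of a single $L(s,\chi^\ast)$, $\chi^\ast\in\mathcal{F}$, with $|\gamma|\le T$; Dirichlet's theorem on simultaneous Diophantine approximation then provides an admissible $\log x$ of size $\exp\!\big(\Theta(Q(x)^2N_T)\big)$, so that one may take $\log\log x\asymp Q(x)^2N_T$.

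The quantitative heart of the matter, and the place where the hypothesis $Q(x)\le\ep(\log\log x)^{1/2}/(\log\log\log x)^{1/2}$ is used decisively, is the balance between this synchronization budget and the requirement that every $\chi^\ast\in\mathcal{F}$ oscillate to the $\log\log\log x$ level. The hypothesis gives $(\log\log x)/Q(x)^2\ge\ep^{-2}\log\log\log x$; since the budget permits $N_T$ up to a constant multiple of $(\log\log x)/Q(x)^2$, this leaves (for $\ep$ small) enough room to choose one value of $T$ making $\sum_{|\gamma_{\chi^\ast}|\le T}1/|\rho_{\chi^\ast}|\gg\log\log\log x$ uniformly over all conductors $\le 2Q(x)$ — precisely the Littlewood-type calibration already carried out for a single modulus in the proof of Theorem~\ref{theorem main precise}, now performed uniformly over the family. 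The exponent $\tfrac12$ here, as opposed to the exponent $1$ in the admissible range $q\lesssim\log\log x/\log\log\log x$ of Theorem~\ref{theorem main precise}, reflects exactly that a dyadic block of moduli $q\in(Q,2Q]$ brings in $\asymp Q^2$ primitive characters, rather than the $\asymp\phi(q)\asymp q$ characters attached to a single modulus. I expect this uniform calibration — a single choice of $T$ and of the alignment arcs that works at once for every conductor from $O(1)$ up to $Q(x)$, while keeping every truncation error below $\sqrt x\log\log\log x$ across all $\asymp Q(x)^2$ members of $\mathcal{F}$ — to be the principal obstacle.

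Three standard ingredients then remain. If GRH fails for some $L(s,\chi^\ast)$ with $\chi^\ast\in\mathcal{F}$, a single zero off the critical line already forces $|\theta(x,\chi^\ast)|$ to be of size at least $x^{1/2}\log\log\log x$ (indeed much larger), and one positions the arcs so that it is not cancelled; thus GRH-failure only helps, and the main argument may be run assuming GRH for the finitely many $L$-functions in $\mathcal{F}$. Next, passing from the parameter around which the construction is calibrated to the actual value $Q(x)$ attained by our $x$ is arranged via the regularity property~\eqref{nice function} of $Q$, just as in the deduction of Theorem~\ref{theorem main precise} from its technical core. Lastly, all the suppressed error terms — the $O(\log q\cdot\log x)$ from imprimitivity, the truncation error in the explicit formula, and the $O(\sqrt x)$ gap between $\psi$ and $\theta$ — are $o(\sqrt x\log\log\log x)$ and hence negligible against the bound being proved.
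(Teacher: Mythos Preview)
Your strategy is the right one and is essentially the paper's: synchronize, via homogeneous Diophantine approximation, the zeros of all Dirichlet $L$-functions attached to moduli $q\in(Q,2Q]$, observe that the total number of Diophantine conditions is $\asymp Q^2 T\log(QT)$, and use the hypothesis $Q\le\ep(\log_2 x/\log_3 x)^{1/2}$ to balance this budget against $\log_2 x$. The paper carries this out by applying Lemma~\ref{lemma diophantine} to $\Lambda=\{0\le\gamma_\chi\le T:\chi\bmod q,\ Q<q\le2Q\}$ and then invoking Lemma~\ref{lemma final expression for R} for each $q$ separately, followed by Cauchy--Schwarz as in~\eqref{CS on H GRH}; you instead aim for per-character oscillation and then sum, which is a legitimate variant.

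There is, however, a genuine gap in your description of the alignment step. You write that one places the fractional parts $\{(\log x)\gamma_{\chi^\ast}/2\pi\}$ in ``prescribed short arcs \dots\ positioned so that the terms reinforce'' and thereby obtains $|\theta(x,\chi^\ast)|\gg\sqrt{x}\sum_{|\gamma|\le T}1/|\rho|$. Dirichlet's theorem (and the pigeonhole Lemma~\ref{lemma diophantine}) is \emph{homogeneous}: it only produces $n$ with $\|n\lambda\|$ small, not with $n\lambda$ near an arbitrary target. Prescribing arcs as you describe would require inhomogeneous simultaneous approximation, which is unavailable without linear independence assumptions on the ordinates. With the homogeneous conclusion $e^{iy\gamma}\approx1$ one gets only $\Re\sum_\rho x^\rho/\rho\approx\sqrt{x}\sum_\gamma(\tfrac12)/|\rho|^2=O(\sqrt{x}\log q)$, which is bounded for bounded conductors and certainly does not yield $\sum 1/|\rho|$. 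The device that actually extracts the $\log_3 x$ is the shift $y=(n+1)\delta$ combined with the short-interval average (Lemmas~\ref{lemma explicit formula precise}--\ref{lemma final expression for R}): this replaces $e^{iy\gamma}$ by $e^{i\delta\gamma}$ and produces the main term $\log(q^2\delta^{-1})$, whence $(\log(q^2\delta^{-1}))^2\gg(\log_3 x)^2$ after the parameters are chosen. Your sketch needs this mechanism in place of the ``prescribed arcs'' heuristic. A minor further point: your GRH-false handling is tangled with the main argument; the paper treats it separately and more simply by observing that once $Q(x)$ exceeds the exceptional conductor $q_e$, the interval $(Q(x),2Q(x)]$ contains a multiple of $q_e$ and a single term dominates, as in the proof of Proposition~\ref{proposition Hooley implies GRH}.
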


Let us briefly describe the tools used in the proofs of Theorems~\ref{theorem main precise},~\ref{theorem fixed q}, and~\ref{theorem average over q}. The first step, which is carried out in Section~\ref{section non GRH}, is to show that if GRH is false, then a stronger but ineffective omega result holds. Indeed, if $L(s,\chi)$ has a non-trivial zero $\rho_{\chi} = \Theta_\chi +i\gamma_\chi$ off the critical line, then by Landau's theorem $|\theta(x,\chi)-\one_{\chi=\chi_0}x|$ can be as large as $x^{\Theta_{\chi}-\ep}$. (Here $\one_{\chi=\chi_0}$ equals~$1$ if $\chi=\chi_0$ and~$0$ otherwise.) This works well for fixed moduli $q$ (as in Theorem~\ref{theorem fixed q}); however, one needs to modify this approach to have a result which is uniform in the range $q\leq x^{o(1)}$ (for Theorem~\ref{theorem main precise}). To achieve this, we apply Parseval's identity~\eqref{equation parseval} and positivity.

Let us use GRH$(\chi)$ to denote the generalized Riemann hypothesis for a specific Dirichlet $L$-function $L(s,\chi)$. If $q_e$ is the least modulus for which a character $\chi_e$ exists such that GRH$(\chi_e)$ is false,
then $\chi_e$ will induce a character modulo every multiple of $q_e$ whose associated Dirichlet $L$-function also violates GRH. As a result, we will deduce (see Proposition~\ref{prop:GRHfalse}) that $G(x;q)$ can be as large as $x^{2\Theta_{\chi_e}-\ep}/\phi(q)$;
since $ \Theta_{\chi_e}$ is independent of $q$ and $x$, this deduction will violate Hooley's conjecture~\eqref{eq:HooleyOConj} in the range $q\leq x^{o(1)}$ for a positive proportion of moduli~$q$. In other words, Hooley's conjecture in any range of the form $q\asymp x^{o(1)}$ is stronger than GRH; indeed, Hooley's conjecture in a range of the form $q\asymp x^\delta$ implies the zero-free strip $\Re(s) > \frac12+\frac\delta2$ for all Dirichlet $L$-functions modulo~$q$ (see Proposition~\ref{proposition Hooley implies GRH}).\

A difficulty arises in this approach when one is looking for a result which holds for many values of~$q$. Indeed, if one uses the oscillations of $ \theta(x,\chi_e)-\one_{\chi=\chi_0}x$ to create large values of $G(x;q)$, say on a sequence~$x_j$, then the condition $q\asymp h(x_q)$ will force~$q$ to be in a set which is possibly thin, since Landau's theorem alone does not give the rate of growth of~$x_j$. To circumvent this possible issue we applied a refined omega result of Kaczorowski and Pintz~\cite{KP}, which gives a rate of growth for $x_j$; however, their main theorem requires the assumption that $L(\sigma,\chi_e)\neq 0$ for $\frac 12\leq \sigma<1$. Fortunately, for our purposes it is sufficient to apply a weaker result, which as we will show can be proven unconditionally. 

In the second step, which is more intricate, we assume that GRH holds. Our general strategy in Section~\ref{section GRH hard} is to apply the explicit formula and to synchronize the summands using homogeneous Diophantine approximation. This goes back to Littlewood's original approach, modified as in~\cite[Theorem 15.11]{MV} and \cite[Lemma 2.4]{RubSar}. However, working uniformly in~$q$ poses significant new challenges. 
 In the classical proofs, 
 the quantity $x^{-\frac 12}(\psi(x)-x)$ is expressed as a sum over zeros of $\zeta(s)$, which under the Riemann hypothesis is approximated~by 
\begin{equation}
-\sum_{\rho} \frac{x^{i\gamma}}{\rho}  \approx -\sum_{\gamma} \frac{x^{i\gamma}}{i\gamma}= -2\sum_{\gamma >0 } \frac{\sin(\gamma \log x)}{\gamma}.
 \label{equation usual approximation}
\end{equation}  
If one applies the same approximation to $\psi(x,\chi)$, then one runs into the problem of potential real zeros $\rho_\chi=\tfrac 12$, that is, $\gamma_\chi=0$. Even assuming Chowla's conjecture $L(\tfrac 12,\chi)\neq 0$, we would still not be able to control the ordinate of the smallest zero uniformly in~$q$. 
Moreover, the approximation~\eqref{equation usual approximation} has an error term of size $x^{\frac 12}$, which translates to $x^{\frac 12} \log q$ in  the case of $\psi(x,\chi)$. This error term would need to be smaller than a main term $ \asymp  x^{\frac 12}\log(q\log\log x)$, which is not the case in the range $ (\log\log x)^{\delta} \leq q \leq \log\log x$.

In order to circumvent these issues, we keep the term on the left hand side of equation~\eqref{equation usual approximation} as it is, and bound the contribution of real zeros by applying a generalization of the $1$-level density result of Hughes and Rudnick~\cite{HR}, as well as a result of Selberg~\cite{Se} on multiplicities of zeros of $L(s,\chi)$ (see Lemma~\ref{lemma multiplicities} for both applications). We compute the average of $\psi(e^y,\chi)$ in suitable short intervals, which will be determined by an application of homogeneous Diophantine approximation, to synchronize the frequencies $   y \gamma_\chi  / 2\pi \bmod 1 $  simultaneously for all zeros $\rho_{\chi}=\tfrac 12+i\gamma_\chi$, with height at most $T$, of all $L(s,\chi)$ with $\chi  \bmod q$. Interestingly, in certain ranges, rather than synchronizing an unbounded number of frequencies for a single character, we synchronize a large enough but bounded number of frequencies for each character modulo~$q$. This step forces the value of $ \log x$ to be as large as $\exp \big( c\phi(q) T \log(qT) \big)$ for some constant $c>0$, which explains the range $q\leq \log\log x/\log\log\log x$ in the second part of Theorem~\ref{theorem main precise}.

We also localize the large values fairly precisely in Theorem~\ref{theorem main precise}, that is, we obtain two-sided bounds on $x$ in terms of $q$ and $T$. The key observation here is that we can exploit the almost-periodicity of $\psi(e^y,\chi)$ as in~\cite[Section 2.2]{RubSar} by finding \emph{many} values of $n$ in the Diophantine approximation step, which will force one of these values to be $\geq \exp\big( \tfrac c3 \phi(q) T \log(qT) \big)$.
 Once this is done, the last step is to estimate the resulting sums using the Riemann--von Mangoldt formula and an evaluation of the average log-conductor~\cite[Proposition 3.3]{FiMa}, which yields the second part of Theorem~\ref{theorem main precise}. 

In order to obtain the full range in Theorem~\ref{theorem main precise} (that is, $q\leq \ep \log\log x$), this approach needs to be further modified. Indeed, the fact that we are synchronizing the frequencies $   y \gamma_\chi  / 2\pi \bmod 1 $ for \emph{all} characters $\chi \bmod q$ forces $x$ to be as large as $\exp(q^{O(q)})$. To reduce this bound, we instead synchronize a \emph{subset} of characters modulo $q$, resulting in the  weaker omega result~\eqref{equation weaker omega result}, which is still strong enough to contradict Hooley's conjecture~\eqref{eq:HooleyOConj}. This approach introduces additional difficulties in the estimation  of the average of the log-conductor, which are overcome by applying the recent statistical results~\cite[Lemma 3.1]{BrFi} and~\cite[Lemma 3.2]{Fi} (see Lemmas~\ref{lemma subset with big conductors} and~\ref{lemma final expression for R} below).

In summary, in Section~\ref{section non GRH} we establish propositions that imply our main theorems when GRH is false; in Section~\ref{section GRH hard} we prove more delicate results that imply our main theorems when GRH is true. In particular, our main technical result is Theorem~\ref{th:main}, after which we deduce Theorems~\ref{theorem main}, \ref{theorem main precise}, \ref{theorem fixed q}, and~\ref{theorem average over q}.

\section{Hooley's conjecture and GRH}

\label{section non GRH}
The goal of this section is to show that Hooley's conjecture~\eqref{eq:HooleyOConj} in any range of the form $q\asymp x^{o(1)}$ is stronger than GRH; as a result, in the proof of Theorem~\ref{theorem main} (that is, when disproving Hooley's conjecture) we will be able to assume GRH in subsequent sections.
We will use the classical notation
$$  \psi(x;q,a):= \sum_{\substack{ n\leq x  \\ n\equiv a \bmod q}} \Lambda(n); \hspace{1cm} \theta(x;q,a):= \sum_{\substack{ p\leq x  \\ p\equiv a \bmod q}} \log p,$$
and for a Dirichlet character $\chi \bmod q$, 
$$ \psi(x,\chi):= \sum_{n\leq x} \chi(n) \Lambda(n); \hspace{1cm} \theta(x,\chi):= \sum_{p\leq x} \chi(p) \log p.$$
We record the Parseval identities
\begin{equation}
 V_{\Lambda}(x;q)=\frac 1{\phi(q)} \sum_{\substack{ \chi \bmod q \\ \chi \neq \chi_0 }} |\psi(x,\chi)|^2 ; \hspace{1cm} G(x;q)= \frac 1{\phi(q)} \sum_{\chi \bmod q} |\theta(x,\chi)-\one_{\chi = \chi_0} x|^2.
 \label{equation parseval}
\end{equation}

Our first step is to see that if $\chi \bmod q$ is a character for which $L(s,\chi)$ does not satisfy the Riemann Hypothesis, then $|\psi(x,\chi)|$ and $|\theta(x,\chi)-\one_{\chi=\chi_0}x|$ have large values. This will follow from a result of Kaczorowski and Pintz~\cite{KP} which we will adapt in order to obtain an unconditional statement. 
 One should keep in mind that this will give  no information about uniformity in $q$. We also mention that for $\chi=\chi_0$, we have the more precise results of Pintz~\cite{Pi} and Schlage--Puchta~\cite{SP}.

\begin{lemma}
\label{lemma:complexoscillations}
Fix $\eps>0$, let $q\geq 1$, and let $\chi$ be a character ${}\bmod q$. Define $\Theta_{\chi}\geq \tfrac 12$ to be the supremum of the real parts of the zeros of $L(s,\chi)$. Then, for every large enough $X$ (in terms of $\chi$ and $\eps$), there exists $x\in [X^{1-\eps},X]$ such that  
$$ \Re(\psi(x,\chi))-\one_{\chi=\chi_0}x <- x^{\Theta_{\chi}-\ep}.$$
\end{lemma}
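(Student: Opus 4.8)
The plan is to derive the oscillation from Landau's theorem applied to a carefully chosen Dirichlet series, and then to upgrade the purely qualitative $\Omega$-statement to the localized form ``there exists $x\in[X^{1-\eps},X]$'' by invoking the quantitative oscillation theorem of Kaczorowski and Pintz, after first arranging that its hypotheses are met unconditionally. First I would recall the explicit-formula heuristic: writing $f(x):=\Re(\psi(x,\chi))-\one_{\chi=\chi_0}x$, the function $f$ detects the zeros of $L(s,\chi)$ with real part $\Theta_\chi$, and there must be at least one such zero $\rho=\Theta_\chi+i\gamma$ (either because the supremum is attained, or by a limiting argument on a sequence of zeros with real parts tending to $\Theta_\chi$). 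The point is that $f(x)$ cannot be eventually $\geq -x^{\Theta_\chi-\eps}$: if it were, then the Mellin transform / Laplace transform $\int_1^\infty f(x)x^{-s-1}\,dx$, which converges for $\Re(s)>\Theta_\chi$ and has a singularity at $s=\rho$ (coming from the $-x^\rho/\rho$ term in the explicit formula for $\psi(x,\chi)$), would by Landau's theorem for Dirichlet series with eventually-one-signed ``error'' be forced to be analytic in a larger half-plane $\Re(s)>\Theta_\chi-\eps$, contradicting the presence of the zero $\rho$ with $\Re(\rho)=\Theta_\chi$.

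For the quantitative localization, I would apply the theorem of Kaczorowski--Pintz~\cite{KP}, which for a function like $\psi(x,\chi)-\one_{\chi=\chi_0}x$ whose associated $L$-function has a zero with real part $\Theta_\chi$ produces, for all sufficiently large $X$, a point $x\in[X^{1-\eps},X]$ at which $\Re(\psi(x,\chi))-\one_{\chi=\chi_0}x<-x^{\Theta_\chi-\eps}$. The catch, as the introduction already flags, is that the cited Kaczorowski--Pintz result is stated under the hypothesis $L(\sigma,\chi)\neq 0$ for $\tfrac12\leq\sigma<1$. So the main obstacle I would have to dispatch is \textbf{removing this hypothesis unconditionally}. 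I expect this is handled by a case split: if $L(\sigma,\chi)\neq 0$ on $[\tfrac12,1)$, apply~\cite{KP} directly. If instead $L(\beta,\chi)=0$ for some real $\beta\in[\tfrac12,1)$ (a Siegel-type zero), then $\Theta_\chi\geq\beta$, and in this case one can run the oscillation argument using the real zero $\beta$ itself: the term $-x^{\beta}/\beta$ in the explicit formula for $\psi(x,\chi)$ contributes a genuinely negative secular term, which makes proving $\psi(x,\chi)-\one_{\chi=\chi_0}x<-x^{\beta-\eps}<-x^{\Theta_\chi-2\eps}$ \emph{easier}, not harder — indeed one gets it for \emph{all} large $x$ in a suitable range rather than just on a sequence, by a direct truncated-explicit-formula estimate. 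After adjusting $\eps$ this yields the claimed bound.

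Concretely, the steps in order are: (1) fix $\eps>0$, $q\geq 1$, $\chi\bmod q$, and set $\Theta_\chi$ as in the statement; (2) dispose of the trivial range issue — if $\Theta_\chi=\tfrac12$, weaken $\eps$ and note $x^{\Theta_\chi-\eps}=x^{1/2-\eps}$ is a standard unconditional oscillation for $\psi(x,\chi)$ (on RH this is classical; off RH it is subsumed by the next steps with $\Theta_\chi>\tfrac12$); (3) split on whether $L(s,\chi)$ has a real zero in $[\tfrac12,1)$; in the ``no'' case quote Kaczorowski--Pintz~\cite{KP} to get $x\in[X^{1-\eps},X]$ with $\Re(\psi(x,\chi))-\one_{\chi=\chi_0}x<-x^{\Theta_\chi-\eps}$; (4) in the ``yes'' case, let $\beta$ be such a zero, truncate the explicit formula $\psi(x,\chi)=\one_{\chi=\chi_0}x-\sum_{|\gamma|\leq T}x^\rho/\rho+O(x(\log qx)^2/T)$, isolate the real term $-x^\beta/\beta$, bound the rest trivially by $x^{\Theta_\chi}$ times the zero count up to height $T$, and choose $T$ a small power of $x$ so that the negative term dominates for all large $x$, giving the bound on the whole interval $[X^{1-\eps},X]$; (5) relabel $\eps$ to absorb constants. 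The genuinely nontrivial input is~\cite{KP} and its unconditional adaptation in case (4); everything else is bookkeeping with the explicit formula and the Riemann--von Mangoldt zero-count $N(T,\chi)\ll T\log(qT)$.
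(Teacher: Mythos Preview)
Your overall plan---apply Kaczorowski--Pintz when its hypothesis holds, and handle the exceptional case directly---matches the paper's shape. But your case split is the wrong one, and your exceptional-case argument does not close.

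You split on whether $L(s,\chi)$ has \emph{some} real zero $\beta\in[\tfrac12,1)$; the paper splits on whether $L(\Theta_\chi,\chi)=0$. The distinction is fatal for your step~(4): nothing prevents $\beta<\Theta_\chi-\eps$, since the supremum $\Theta_\chi$ may be approached (or attained) only by complex zeros while a stray real zero sits much further left. In that scenario your chain $-x^{\beta-\eps}<-x^{\Theta_\chi-2\eps}$ is simply false for large~$x$. Worse, even if $\beta=\Theta_\chi$, your ``bound the rest trivially by $x^{\Theta_\chi}$ times the zero count'' gives a bound of order $x^{\Theta_\chi}\sum_{|\gamma|\le T}|\rho|^{-1}\gg x^{\Theta_\chi}$, which swamps the single term $x^{\Theta_\chi}/\Theta_\chi$ you want to isolate. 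The truncated explicit formula with a trivial bound on the oscillating zeros cannot produce a one-signed conclusion.

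The paper's substitute in the case $L(\Theta_\chi,\chi)=0$ is an \emph{averaging} argument: integrate $e^{-\Theta_\chi t}\psi(e^t,\chi)$ over $t\in[(1-\eps)\log X,\log X]$. Each zero $\rho_\chi\ne\Theta_\chi$ then contributes
\[
\frac{e^{T_2(\rho_\chi-\Theta_\chi)}-e^{T_1(\rho_\chi-\Theta_\chi)}}{\rho_\chi(\rho_\chi-\Theta_\chi)(T_2-T_1)},
\]
which is $O\big(|\rho_\chi|^{-1}|\rho_\chi-\Theta_\chi|^{-1}(\eps\log X)^{-1}\big)$ since $\Re(\rho_\chi-\Theta_\chi)\le 0$. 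The resulting sum converges absolutely and tends to $0$ as $X\to\infty$, leaving only the constant $-\ord_{s=\Theta_\chi}L(s,\chi)/\Theta_\chi<0$; the mean-value argument then locates $x\in[X^{1-\eps},X]$ with $x^{-\Theta_\chi}\Re(\psi(x,\chi))<-\ord/(2\Theta_\chi)$. That averaging step---which kills the oscillatory zero sum by gaining an extra factor of $|\rho_\chi-\Theta_\chi|$ in the denominator---is the idea your proposal is missing.
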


\begin{proof}
Suppose first that $L(\Theta_{\chi},\chi)\ne0$.
Then the claim follows from setting $f(x)=\Re(\psi(x,\chi)-\one_{\chi=\chi_0}\one_{x\geq 1}x)$ and applying~\cite[Theorem 1]{KP}. Indeed,
$$ \int_{0}^\infty f(x) x^{-s-1} d x=  -\frac 1{2s} \Big( \frac{L'(s,\chi)}{L(s,\chi)}+\frac{L'(s,\overline{\chi})}{L(s,\overline{\chi})}\Big)-\frac{ \one_{\chi=\chi_0}}{s-1}, $$
which is regular in the half plane $\Re(s)> \Theta_\chi$ but not in any half plane of the form $\Re(s)> \Theta_\chi-\eps$. Here we used the fact that the residues of $L'(s,\chi)/L(s,\chi)$ are nonnegative, since $L(s,\chi)-\one_{\chi=\chi_0}/(s-1)$ is entire (in other words, the poles of $L'(s,\chi)/L(s,\chi)$ and $L'(s,\overline{\chi})/L(s,\overline{\chi})$ cannot cancel each other).
 
Suppose otherwise that $L(\Theta_{\chi},\chi)=0$, and thus $\chi\neq \chi_0$. The explicit formula~\cite[Theorems~12.5 and~12.10]{MV} implies that for $T\geq 1$,
$$  \psi(x,\chi)
= -\sum_{\substack{\rho_{\chi} \\ |\Im(\rho_\chi)|\leq T}} \frac{x^{\rho_\chi}}{\rho_{\chi}} +O\Big( \log(qx)+\frac{ x (\log(qxT))^2}{T}\Big).$$
We deduce that for any $0<T_1<T_2 $,
\begin{multline*} \frac 1{T_2-T_1} \int_{T_1}^{T_2}  e^{-\Theta_\chi t}  \psi(e^t,\chi) \,dt = -\sum_{\rho_\chi\neq \Theta_\chi} \frac{e^{T_2 (\rho_\chi-\Theta_\chi)} -e^{T_1 (\rho_\chi-\Theta_\chi)  }}{\rho_{\chi}(\rho_{\chi}-\Theta_\chi)(T_2-T_1)} - \frac{\ord_{s=\Theta_\chi} L(s,\chi)}{\Theta_\chi} \\ +O\Big( e^{-\Theta_\chi T_1} \log(qT_2)\Big).
\end{multline*}
Taking $T_1=(1-\eps)\log X$ and $T_2=\log X$, and noting that the infinite sum over zeros in the last equation converges absolutely, we deduce that for large enough $X$ there exists $x\in [X^{1-\eps},X]$ for which 
$$x^{-\Theta_\chi }\Re(\psi(x,\chi)) < - \frac{\ord_{s=\Theta_\chi} L(s,\chi)}{2\Theta_\chi}. $$
The claim follows.
\end{proof}

With this omega result in hand, we will deduce that in certain ranges, the upper bound~\eqref{eq:HooleyOConj} is stronger than GRH. This is made precise in the following proposition. The goal here is to overcome the uniformity problems caused by the fact that $\Theta_\chi$ depends on~$\chi$ in Lemma~\ref{lemma:complexoscillations}. This will be done by noticing that for many moduli $q$, characters of small conductor occur in the sums in equation~\eqref{equation parseval}. 

\begin{proposition}
\label{proposition Hooley implies GRH}
Fix $0<\delta<1$, and assume that equation~\eqref{eq:HooleyOConj} holds in the range $x^{\delta} \leq q \leq 2x^{\delta}$. Then every Dirichlet $L$-function is nonvanishing in the half-plane $\Re(s)> \tfrac 12+ \tfrac {\delta}2$. If one replaces $G(x;q)$ with $V_{\Lambda}(x;q)$ in equation~\eqref{eq:HooleyOConj}, then the same half-plane is zero-free for all Dirichlet $L$-functions corresponding to nonprincipal characters.
\end{proposition}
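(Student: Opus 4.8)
The plan is to contrapose: suppose some Dirichlet $L$-function $L(s,\chi^*)$ has a zero in the half-plane $\Re(s) > \tfrac12 + \tfrac\delta2$, and derive a violation of~\eqref{eq:HooleyOConj} in the range $x^\delta \le q \le 2x^\delta$. Let $q^*$ be the conductor of $\chi^*$, viewed now as a primitive character, and let $\Theta := \Theta_{\chi^*} > \tfrac12 + \tfrac\delta2$ be the supremum of the real parts of its zeros. The key structural observation, already flagged in the paragraph before the proposition, is that for many moduli $q$ the primitive character $\chi^*$ induces a character modulo $q$ (whenever $q^* \mid q$), and the induced character's $L$-function has the same zeros as $L(s,\chi^*)$ off the real axis up to the finitely many Euler factors at primes dividing $q/q^*$ — in particular the same supremum $\Theta$. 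So I would fix $q$ to be a multiple of $q^*$ lying in $[x^\delta, 2x^\delta]$; such a $q$ exists for all large $x$ since the interval has length $x^\delta \to \infty$ while $q^*$ is fixed.

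Next I would feed this into Parseval's identity~\eqref{equation parseval}: $G(x;q) = \tfrac1{\phi(q)}\sum_{\chi \bmod q} |\theta(x,\chi) - \one_{\chi=\chi_0} x|^2$, and by positivity this is $\ge \tfrac1{\phi(q)} |\theta(x,\chi) - \one_{\chi=\chi_0} x|^2$ for the single induced character $\chi$. Since $\theta$ and $\psi$ differ by $O(\sqrt x \log x)$ and the character $\chi$ is nonprincipal here (its primitive inducer $\chi^*$ is nonprincipal because a principal character's $L$-function satisfies RH on $\Re(s) > 1$... more carefully: if $\chi^*$ were principal then $L(s,\chi^*)$ is essentially $\zeta(s)$ times finitely many factors and has no zeros with $\Re(s) > \tfrac12$ other than the pole structure — so our hypothesis forces $\chi^* \ne \chi_0$), Lemma~\ref{lemma:complexoscillations} applied with $X = x$ and a small auxiliary $\eps' > 0$ gives, for infinitely many $x$, a value of $x$ (after relabeling within $[X^{1-\eps'}, X]$, adjusting $q$ accordingly) with $|\psi(x,\chi)| \gg x^{\Theta - \eps'}$, hence $|\theta(x,\chi)| \gg x^{\Theta - \eps'}$. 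Therefore
\[
G(x;q) \gg \frac{x^{2\Theta - 2\eps'}}{\phi(q)} \gg \frac{x^{2\Theta - 2\eps'}}{q} \gg \frac{x^{2\Theta - 2\eps'}}{2x^\delta} = \tfrac12 x^{2\Theta - \delta - 2\eps'}.
\]
Since $2\Theta - \delta > 1 + \delta - \delta = 1$... wait, $2\Theta > 1 + \delta$ so $2\Theta - \delta > 1$; choosing $\eps'$ small enough that $2\Theta - \delta - 2\eps' > 1 + \eta$ for some fixed $\eta > 0$, we get $G(x;q) \gg x^{1+\eta} \gg x \log q = x \log(2x^\delta) \asymp x \log x$ for $x$ large, contradicting~\eqref{eq:HooleyOConj}. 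The argument for $V_\Lambda$ is identical but slightly cleaner, since $V_\Lambda$ only involves nonprincipal characters, which is why that version of the statement only concludes zero-freeness for $L$-functions of nonprincipal characters (we cannot access the zeros of $\zeta$ itself through $V_\Lambda$).

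The main obstacle — really a bookkeeping subtlety rather than a deep difficulty — is reconciling the three coupled constraints: Lemma~\ref{lemma:complexoscillations} only produces the large value at \emph{some} $x$ in a dyadic-type window $[X^{1-\eps'}, X]$, the hypothesis~\eqref{eq:HooleyOConj} is assumed only for $q$ in the window $[x^\delta, 2x^\delta]$, and we need $q$ to be a multiple of the fixed conductor $q^*$. I would handle this by running the logic in the order: first pick a large $X$, apply the lemma to get $x \in [X^{1-\eps'}, X]$ with the oscillation bound for the \emph{primitive} character $\chi^*$ (equivalently for its inducer at any level); then choose $q \in [x^\delta, 2x^\delta]$ divisible by $q^*$ (possible once $x$ is large since the window has length $\to\infty$); then note $L(s,\chi)$ for the induced $\chi \bmod q$ has $\psi(x,\chi) = \psi(x,\chi^*) + O(q^* \log x) $ — actually $\psi(x,\chi) - \psi(x,\chi^*) = -\sum_{p \mid q,\, p \nmid q^*}(\ldots)$, a sum of $O((\log x)(\log q))$ — which is negligible against $x^{\Theta-\eps'}$, so the lower bound transfers to the induced character. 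One should also double-check that $\Theta_\chi = \Theta_{\chi^*}$ for the induced character: the extra Euler factors $\prod_{p \mid q,\, p\nmid q^*}(1 - \chi^*(p)p^{-s})$ are entire and nonvanishing for $\Re(s) > 0$, so they add no zeros in the relevant region and the supremum of real parts of zeros is unchanged. With these observations in place the computation is the short display above.
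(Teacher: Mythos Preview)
Your approach is essentially the paper's: contrapose, fix a primitive $\chi^*$ with a zero of real part exceeding $\tfrac12+\tfrac\delta2$, invoke Lemma~\ref{lemma:complexoscillations} to produce $x$ with a large oscillation, choose $q\in[x^\delta,2x^\delta]$ divisible by the conductor, transfer the oscillation to the induced character (the difference being $O(\log q)$ for $\theta$), and keep a single term in Parseval. The bookkeeping you flag as the ``main obstacle'' is exactly what the paper does, in the same order.

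There is one genuine slip. Your parenthetical assertion that $\chi^*$ must be nonprincipal is unjustified: you write that if $\chi^*$ were principal then $L(s,\chi^*)$, being essentially $\zeta(s)$, ``has no zeros with $\Re(s)>\tfrac12$'' --- but that is precisely the Riemann hypothesis, which is not available to you. The first sentence of the proposition asserts zero-freeness for \emph{every} Dirichlet $L$-function, including $\zeta(s)$, so the contrapositive must allow $\chi^*$ to be the trivial character modulo~$1$; in that case the induced character modulo $q$ is the principal character $\chi_0$. The fix is immediate and already wired into the tools: both Lemma~\ref{lemma:complexoscillations} and the Parseval identity~\eqref{equation parseval} for $G$ carry the correction $\one_{\chi=\chi_0}x$, so simply work with $|\theta(x,\chi)-\one_{\chi=\chi_0}x|$ throughout instead of $|\theta(x,\chi)|$, exactly as the paper does. (This is also the reason, which you correctly identify, that the $V_\Lambda$ version only yields the conclusion for nonprincipal $L$-functions: the sum defining $V_\Lambda$ omits $\chi_0$, so a hypothetical zero of $\zeta$ cannot be detected there.) With that adjustment your proof coincides with the paper's.
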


\begin{proof}
We prove the contrapositive. Suppose that there exists a primitive Dirichlet $L$-function $L(s,\chi_e)$ of conductor $q_e\ge 1$ that has a zero with real part $\beta_e > \frac12+\frac\delta2$. By Lemma~\ref{lemma:complexoscillations}, for every $0<\ep < \beta_e$ there exists an increasing sequence $\{x_j\}_{j\geq 1}$ tending to infinity such that
\[
|\theta(x_j,\chi_e)-\one_{\chi=\chi_0}x_j| \geq x_j^{\beta_e-\ep}.
\]
We may assume that each $x_j > q_e^{1/\delta}$, so that any interval of length $x_j^\delta$ contains a multiple of~$q_e$.
For each $j\ge1$, choose an integer $x_j^\delta \le q_j \le 2x_j^\delta$ that is a multiple of~$q_e$, and let $\chi_j$ be the character${}\bmod q_j$ induced by~$\chi_e$. Note that
\begin{equation}  \label{GRH false step 1}
|\theta(x_j,\chi_e) - \theta(x_j,\chi_j)| \le \log q_j \ll_\delta \log x_j,
\end{equation}
and hence for $j$ large enough in terms of $\chi_e$ and $\eps$,
\begin{equation}  \label{GRH false step 2}
|\theta(x_j,\chi_j)-\one_{\chi=\chi_0} x_j| \geq x_j^{\beta_e-\ep}
\end{equation}
as well. Consequently, when $j$ is large enough we have that
\begin{align}\notag  
G(x_j;q_j) &= \frac1{\phi(q_j)} \sum_{\substack{\chi\bmod{q_j}}} |\theta(x_j,\chi)-\one_{\chi=\chi_0} x_j|^2 \\
&\ge \frac{|\theta(x_j,\chi_j)-\one_{\chi=\chi_0} x_j|^2}{q_j} \ge \frac{x_j^{2(\beta_e-\ep)}}{2x_j^\delta} = \frac{x_j^{2\beta_e-\delta-2\ep}}2,\label{GRH false step 3}
\end{align}
so that
\begin{equation}  \label{GRH false step 4}
\frac{G(x_j;q_j)}{x_j\log q_j} \gg \frac{x_j^{2\beta_e-1-\delta-2\ep}}{\log q_j} \gg \frac{x_j^{2\beta_e-1-\delta-2\ep}}{\delta \log x_j}.
\end{equation}
By assumption, $2\beta_e-1-\delta>0$, and so the exponent $2\beta_e-1-\delta-2\ep$ is positive as long as $\ep$ is chosen small enough. Therefore
\[
\lim_{j\to\infty} \frac{G(x_j;q_j)}{x_j\log q_j} = \infty,
\]
contradicting equation~\eqref{eq:HooleyOConj}. The proof is identical for $V_{\Lambda}(x;q)$.
\end{proof}

We now adapt the arguments in the proof of Proposition~\ref{proposition Hooley implies GRH} to prove a proposition that is more suitable for the proof of Theorem~\ref{theorem main}. 

\begin{proposition}
Assume that GRH is false. Then there exists an absolute constant $\delta>0$ with the following property. Let $h(x)$ be an increasing function tending to infinity such that $h(x) = o(x^\delta)$ as $x\rightarrow \infty$. For a positive proportion of moduli~$q$, there exist associated values~$x_q$ such that 
$h(x_q^{1-\delta})\leq q\leq h(x_q)$ and
$$ G(x_q;q) \geq x_q^{1+\delta}.$$ 
If GRH$(\chi)$ is false for some nonprincipal character~$\chi$, then the same lower bound holds with $V_{\Lambda}(x_q;q)$ in place of $G(x_q;q)$.
\label{prop:GRHfalse}
\end{proposition}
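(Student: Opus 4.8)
The plan is to follow the strategy of the proof of Proposition~\ref{proposition Hooley implies GRH}, the one new ingredient being a careful choice of the point at which the oscillation result is applied, so that the prescribed window for~$q$ is respected. Since GRH fails, fix a primitive Dirichlet character $\chi_e$ of some conductor $q_e\ge 1$ with $\Theta_{\chi_e}>\tfrac12$; for the $V_\Lambda$-statement the hypothesis lets us take $\chi_e$ nonprincipal, so that $q_e\ge 2$. We then set, once and for all, $\delta:=(2\Theta_{\chi_e}-1)/10$ and $\eps:=(2\Theta_{\chi_e}-1)/20$ (genuine constants, depending only on the hypothetical counterexample to GRH and not on $h$ or $q$); one checks directly that $0<\eps<\delta$, that $\Theta_{\chi_e}-\eps>\tfrac12$, and that $2\Theta_{\chi_e}-2\eps-\delta>1+\delta$. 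The positive-proportion family of moduli will simply be the multiples of $q_e$, which have lower density $1/q_e>0$.

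Fix a large multiple $q$ of $q_e$ and let $\chi_j\bmod q$ be the character induced by $\chi_e$. The first step is to locate the point $X_q$ at which to apply Lemma~\ref{lemma:complexoscillations}. Put $u_0:=\sup\{v\ge 1:h(v)<q\}$; since $h$ is increasing to infinity, $u_0$ is finite, $u_0\to\infty$ as $q\to\infty$, one has $h(v)<q$ for all $v<u_0$ and $h(v)\ge q$ for all $v>u_0$. With $\alpha:=(1-\delta)/(1-\eps)\in(0,1)$ we have $u_0^{1/\alpha}-u_0\to\infty$, so for $q$ large we may pick $u_q\in(u_0,u_0^{1/\alpha})$ and set $X_q:=u_q^{1/(1-\eps)}$, which tends to infinity with~$q$. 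Then every $x$ in the interval $[X_q^{1-\eps},X_q]=[u_q,X_q]$ satisfies $x\ge u_q>u_0$, hence $h(x)\ge q$, and $x^{1-\delta}\le X_q^{1-\delta}=u_q^{\alpha}<u_0$, hence $h(x^{1-\delta})<q$; so any such $x$ already lies in the required window $h(x^{1-\delta})\le q\le h(x)$.

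Next we invoke the oscillation. Since $X_q\to\infty$, Lemma~\ref{lemma:complexoscillations} with parameter $\eps$ and $X=X_q$ applies for $q$ large in terms of $\chi_e$ and $\eps$, producing $x_q\in[X_q^{1-\eps},X_q]$ with $\Re(\psi(x_q,\chi_e))-\one_{\chi_e=\chi_0}x_q<-x_q^{\Theta_{\chi_e}-\eps}$, so that $|\psi(x_q,\chi_e)-\one_{\chi_e=\chi_0}x_q|\ge x_q^{\Theta_{\chi_e}-\eps}$. Since $\Theta_{\chi_e}-\eps>\tfrac12$, the elementary bound $|\psi(x,\chi)-\theta(x,\chi)|\ll x^{1/2}$ lets us replace $\psi$ by $\theta$; passing from $\chi_e$ to $\chi_j$ alters $\theta(x_q,\cdot)$ by at most $\sum_{p\mid q}\log p\le\log q\le\log h(x_q)=o(x_q^{\Theta_{\chi_e}-\eps})$, while $\one_{\chi_j=\chi_0}=\one_{\chi_e=\chi_0}$. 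Hence $|\theta(x_q,\chi_j)-\one_{\chi_j=\chi_0}x_q|\ge\tfrac14 x_q^{\Theta_{\chi_e}-\eps}$ for $q$ large, and the hypothesis $h(x)=o(x^\delta)$ also gives $q\le h(x_q)\le x_q^{\delta}$ for $q$ large. By the Parseval identity~\eqref{equation parseval},
\[
G(x_q;q)\ge\frac{|\theta(x_q,\chi_j)-\one_{\chi_j=\chi_0}x_q|^2}{\phi(q)}\ge\frac{x_q^{2(\Theta_{\chi_e}-\eps)}}{16\,q}\ge\frac{x_q^{2\Theta_{\chi_e}-2\eps-\delta}}{16}\ge x_q^{1+\delta},
\]
the last step holding for $q$ large because $2\Theta_{\chi_e}-2\eps-\delta>1+\delta$. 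For the $V_\Lambda$-statement, $\chi_e$ and hence $\chi_j$ is nonprincipal, so all indicator terms vanish, $\chi_j$ occurs in the sum in~\eqref{equation parseval}, $|\psi(x_q,\chi_j)-\psi(x_q,\chi_e)|\le(\log q)(\log x_q)=o(x_q^{\Theta_{\chi_e}-\eps})$, and the same computation yields $V_\Lambda(x_q;q)\ge|\psi(x_q,\chi_j)|^2/\phi(q)\ge x_q^{1+\delta}$.

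The only genuinely delicate point is the first step: Lemma~\ref{lemma:complexoscillations} localizes the large value of $|\psi(\cdot,\chi_e)|$ only to an interval $[X^{1-\eps},X]$, so to force the accompanying modulus into the window $h(x^{1-\delta})\le q\le h(x)$ one must take the Lemma's shrink parameter strictly below~$\delta$ and position $X_q$ so that the \emph{entire} interval $[X_q^{1-\eps},X_q]$ lies in the region where $h$ exceeds $q$ while $h$ of the $(1-\delta)$-th power stays below~$q$; the remainder is bookkeeping with the three constants $\Theta_{\chi_e},\delta,\eps$ to keep every inequality strict, together with the trivial remark that the multiples of a fixed $q_e$ form a positive proportion of all moduli.
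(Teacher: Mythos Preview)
Your proof is correct and follows essentially the same approach as the paper's: fix a primitive character $\chi_e$ violating GRH, take the multiples of its conductor as the positive-proportion family, apply Lemma~\ref{lemma:complexoscillations} at a carefully positioned $X_q$ determined by $h$ and $q$, and then drop all but the $\chi_j$-term in the Parseval identity~\eqref{equation parseval}. Your handling of the localization step is slightly more explicit than the paper's (you work with $u_0=\sup\{v:h(v)<q\}$ and a second parameter $\eps<\delta$ to ensure the entire interval $[X_q^{1-\eps},X_q]$ lands in the window, rather than invoking $h^{-1}$ directly), but this is the same idea carried out with a bit more care.
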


\begin{proof}
Fix a modulus $q_e\geq 1$ for which there exists an associated primitive character $\chi_e$ such that $L(s,\chi_e)$ has a zero with real 
part $\beta_{e} > \tfrac 12$. Fix a positive number $\ep < \beta_e-\frac12$, and choose a positive number $\delta < \beta_e-\frac12-\ep \leq \frac 12$. Now let~$q$ be any large enough multiple of~$q_e$; the set of such moduli~$q$ has positive (though ineffective) density in~$\mathbb N$. 
By Lemma~\ref{lemma:complexoscillations}, there exists $x_q\in [h^{-1}(q),h^{-1}(q)^{\frac 1{1-\delta}}]$ such that 
$$ | \theta(x_q,\chi_e)| > x_q^{\beta_{e}-\ep}.$$
Note that this implies that $q	\in [h(x_q^{1-\delta}),h(x_q)]$. Denote by $\chi_q$ the character${}\bmod{q}$ induced by~$\chi_e$. 
Then the calculations in equations~\eqref{GRH false step 1} through~\eqref{GRH false step 3} apply exactly to this situation; we conclude that
\begin{equation*}
G(x_q;q) \gg x_q^{2\beta_e-\delta-2\ep},
\end{equation*}
and the right-hand side is eventually larger than $x_j^{1+\delta}$ by our choice of~$\delta$, establishing the asserted lower bound. The proof for $V_{\Lambda}(x;q)$ is identical as long as $\chi_e$ is nonprincipal.
\end{proof}

To end this section, we further adapt Proposition~\ref{proposition Hooley implies GRH} with the aim of proving Theorem~\ref{theorem fixed q}. This situation is much easier since there are no uniformity issues (that is,~$q$ is fixed). 

\begin{proposition}
\label{proposition GRH false single chi}
Fix $q\geq 1$, and assume that there exists a character $\chi_e \bmod q$ such that GRH$(\chi_e)$ is false. Then, there exists a sequence $\{x_i\}_{i\geq 1}$, depending on $q$, such that for each $\eps>0$,
$$ G(x_i;q) \geq \frac 1{\phi(q)} |\theta(x_i,\chi_e)-\one_{\chi_e=\chi_0}x_i|^2 \gg_{\eps,q} x_i^{2\Theta_{\chi_e}-\eps}.$$
Here, $\Theta_{\chi_e}$ is the supremum of real parts of zeros of $L(s,\chi_e)$ with $\chi\bmod q$. Similarly for $V_{\Lambda}(x;q)$ and $\psi(x,\chi)$. 
\end{proposition}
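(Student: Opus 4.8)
The plan is to follow the template already used in Propositions~\ref{proposition Hooley implies GRH} and~\ref{prop:GRHfalse}, but in the much easier setting where $q$ is fixed; in particular there is no need to pass to an induced character of larger modulus, no Diophantine approximation, and no need to optimize the exponent $\delta$. First I would invoke Lemma~\ref{lemma:complexoscillations} applied directly to the character $\chi_e \bmod q$: since GRH$(\chi_e)$ is false we have $\Theta_{\chi_e} > \tfrac12$, and the lemma produces, for each $\eps>0$ and each large enough $X$, a value $x \in [X^{1-\eps},X]$ with $\Re(\psi(x,\chi_e)) - \one_{\chi_e=\chi_0} x < -x^{\Theta_{\chi_e}-\eps}$, hence $|\psi(x,\chi_e) - \one_{\chi_e=\chi_0}x| > x^{\Theta_{\chi_e}-\eps}$. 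Letting $X$ run through a sequence tending to infinity yields a sequence $\{x_i\}_{i\ge1}$, depending on $q$ (through $\chi_e$), along which this lower bound holds. Running the same argument with $\eps/2$ in place of $\eps$ absorbs the passage from $\psi$ to $\theta$, since $|\psi(x,\chi_e)-\theta(x,\chi_e)| \ll \sqrt{x}\log^2 x = o(x^{\Theta_{\chi_e}-\eps})$ as $\Theta_{\chi_e} > \tfrac12$; thus $|\theta(x_i,\chi_e)-\one_{\chi_e=\chi_0}x_i| \gg_{\eps,q} x_i^{\Theta_{\chi_e}-\eps}$ as well, the implied constant being allowed to depend on $\eps$ and $q$.

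Next I would feed this into the Parseval identity~\eqref{equation parseval} for $G(x;q)$. Dropping every term except the $\chi = \chi_e$ term gives
\[
G(x_i;q) = \frac{1}{\phi(q)}\sum_{\chi \bmod q} |\theta(x_i,\chi) - \one_{\chi=\chi_0}x_i|^2 \geq \frac{|\theta(x_i,\chi_e) - \one_{\chi_e=\chi_0}x_i|^2}{\phi(q)} \gg_{\eps,q} x_i^{2\Theta_{\chi_e}-2\eps}.
\]
Since $\eps>0$ is arbitrary we may rename $2\eps$ as $\eps$, obtaining the stated bound $G(x_i;q) \gg_{\eps,q} x_i^{2\Theta_{\chi_e}-\eps}$, with the displayed intermediate inequality being exactly the one asserted in the proposition. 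The argument for $V_{\Lambda}(x;q)$ is identical, using the $\psi$-version of Lemma~\ref{lemma:complexoscillations} directly (so one does not even need the $\psi$-to-$\theta$ comparison) together with the $V_{\Lambda}$ half of~\eqref{equation parseval}; here one should note that if $\chi_e = \chi_0$ the proof still works for $G$ but the $V_{\Lambda}$ statement uses the nonprincipal character hypothesis only insofar as the principal character is excluded from the sum defining $V_{\Lambda}$, and in any case the existence of a violating $\chi_e \bmod q$ means one can take it nonprincipal whenever $V_{\Lambda}$ is at issue, since $\chi_0 \bmod q$ does not appear there.

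I do not anticipate a genuine obstacle here: the only subtlety is bookkeeping of which quantities the implied constants depend on, and the observation that $\Theta_{\chi_e} > \tfrac12$ is exactly what makes the elementary error term $\sqrt{x}\log^2 x$ in the $\psi$--$\theta$ comparison negligible. If anything, the "hardest" point is purely cosmetic: making sure the sequence $\{x_i\}$ is presented as a single sequence valid for all $\eps>0$ simultaneously rather than a sequence depending on $\eps$. This is handled by a standard diagonal choice — for each $i$, apply Lemma~\ref{lemma:complexoscillations} with $\eps = 1/i$ and $X = X_i$ for a rapidly increasing sequence $X_i$, so that along the resulting $x_i$ one has $|\theta(x_i,\chi_e)-\one_{\chi_e=\chi_0}x_i| \geq x_i^{\Theta_{\chi_e}-1/i}$, which for any fixed $\eps$ exceeds $x_i^{\Theta_{\chi_e}-\eps}$ once $1/i < \eps$. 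This gives the proposition as stated.
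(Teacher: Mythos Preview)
Your proposal is correct and follows exactly the approach the paper uses: the paper's proof is the single sentence ``This follows at once from equation~\eqref{equation parseval} and Lemma~\ref{lemma:complexoscillations},'' and you have simply unpacked those two ingredients. Your extra care with the $\psi$-to-$\theta$ comparison and the diagonal choice of a single sequence valid for all $\eps$ are details the paper leaves implicit, but the underlying argument is the same.
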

\begin{proof}
This follows at once from equation~\eqref{equation parseval} and Lemma~\ref{lemma:complexoscillations}.
\end{proof}

\section{Explicit formulas and homogeneous Diophantine approximation}
\label{section GRH hard}

The goal of this section is to show that GRH implies Theorem~\ref{theorem main precise}. Our goal will be to synchronize the arguments of the summands in the explicit formula, but only for a subset~$\F_q$ of the set~$\mathcal X_q$ of characters modulo~$q$.

Throughout,~$\gamma_{\chi}$ denotes the imaginary part of a nontrivial zero of $L(s,\chi)$, and~${q_\chi}$ denotes the conductor of~$\chi$. We let $\|t\|$ denote the distance from~$t$ to the nearest integer, and we use the shorthand $\log_2 t=\log\log t$ and $\log_3 t=\log\log\log t$.

\begin{lemma}
\label{lemma multiplicities}
Let $q\geq 3$ be an integer. If $\chi\bmod q$ is a nonprincipal character, then assuming GRH$(\chi)$ we have the bound
$$  \ord_{s=\frac 12} L(s,\chi) \ll \frac{\log q}{\log_2 q}.$$
Moreover, if GRH$(\chi)$ is true for all nonprincipal $\chi \bmod q$, then
\begin{equation*}
 \sum_{\chi \bmod q} \ord_{s=\frac 12} L(s,\chi) \leq \bigg( \frac 12+o_{q\rightarrow \infty}(1) \bigg) \phi(q) .
\end{equation*}

\end{lemma}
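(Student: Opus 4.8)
The plan is to bound the order of vanishing at the central point $s=\tfrac12$ for each Dirichlet $L$-function using a standard argument based on an explicit-formula-type inequality (positivity of the explicit formula applied to a nonnegative test function concentrated near $s = \tfrac12$), combined with the Riemann--von Mangoldt estimate $N(T,\chi) \ll \log(q_\chi(|T|+2))$ for the number of zeros up to height $T$. For a single nonprincipal $\chi \bmod q$, under GRH$(\chi)$ all nontrivial zeros lie on the critical line, so $\ord_{s=1/2}L(s,\chi)$ counts zeros at $\gamma_\chi = 0$. Either I would cite Selberg~\cite{Se} directly for the bound on the multiplicity of an individual zero, giving $\ord_{s=1/2}L(s,\chi) \ll \log q/\log_2 q$; or, to be self-contained, I would apply an explicit formula with a test function $\phi$ whose Fourier transform $\hat\phi$ is supported in $[-\eta,\eta]$ and nonnegative, so that $\sum_{\gamma_\chi}\hat\phi(\gamma_\chi \log(q/\ldots)) $ dominates $m\cdot\hat\phi(0)$ where $m$ is the multiplicity, while the explicit formula bounds the same sum by $O(\log q)$ plus lower-order terms; optimizing $\eta \asymp 1/\log_2 q$ (say) yields the stated $\log q/\log_2 q$ savings. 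I expect the first, cited route via Selberg to be the cleanest.

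For the second, averaged statement, the idea is to sum the one-level density estimate of Hughes--Rudnick~\cite{HR} (or the generalization alluded to in the introduction) over all characters $\chi \bmod q$. Applying a suitable even test function $\phi \ge 0$ with $\hat\phi$ supported in a fixed interval $(-2,2)$ and $\phi(0) = \hat\phi(0) = 1$, the one-level density for the family $\{\chi \bmod q\}$ under GRH for all these $L$-functions gives
\[
\frac{1}{\phi(q)}\sum_{\chi \bmod q} \sum_{\gamma_\chi} \phi\!\left(\frac{\gamma_\chi \log q}{2\pi}\right) = \left(\int_{-\infty}^{\infty}\phi(t)\,dt\right)\big(1 + o(1)\big) + (\text{diagonal/lower-order terms}),
\]
and the point is that each zero at $\gamma_\chi = 0$ contributes exactly $\phi(0) = 1$ to the inner sum, so that $\sum_{\chi}\ord_{s=1/2}L(s,\chi) \le \phi(0)^{-1}\sum_\chi \sum_{\gamma_\chi}\phi(\gamma_\chi \log q/2\pi)$. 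The arithmetic factor in the one-level density main term for the full family of Dirichlet characters mod $q$ (as opposed to primitive ones) is what produces the constant; with the right normalization of the test function and support this main term is $(\tfrac12 + o(1))\phi(q)$, as required. I would choose $\phi$ with $\hat\phi$ supported in $[-1,1]$ and nonnegative (e.g. the Fej\'er kernel $\phi(t) = (\sin(\pi t)/(\pi t))^2$, whose transform is the triangle on $[-1,1]$), so that the off-diagonal prime-sum terms in the explicit formula are negligible and only the "diagonal" term survives, contributing the factor $\tfrac12$.

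The main obstacle I anticipate is bookkeeping the contribution of imprimitive characters and of the principal character in the sum $\sum_{\chi \bmod q}$: the clean one-level density results are usually stated for primitive characters of a fixed conductor, so I would need to either (i) group characters $\chi \bmod q$ by their primitive inducing character $\chi^*$ of conductor $q^* \mid q$, note that $\ord_{s=1/2}L(s,\chi) = \ord_{s=1/2}L(s,\chi^*)$ since the removed Euler factors $\prod_{p\mid q}(1-\chi^*(p)p^{-s})$ do not vanish at $s=\tfrac12$, and sum the primitive one-level densities over $q^* \mid q$ weighted by the number of $\chi \bmod q$ inducing each; or (ii) work directly with an explicit formula for $L(s,\chi)$ with $\chi \bmod q$ (not necessarily primitive), absorbing the extra Euler-factor logarithmic-derivative terms into the error. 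Either way, one must check that the total number of "extra" zeros introduced this way — and the contribution of the single principal character (whose $L$-function is $\zeta(s)$ times a finite product, with $\zeta(\tfrac12) \ne 0$, contributing $0$ to the order) — does not inflate the constant beyond $\tfrac12 + o(1)$. The second delicate point is ensuring the support of $\hat\phi$ is chosen small enough (strictly inside the admissible range for the unconditional/GRH one-level density) that the off-diagonal terms are genuinely $o(\phi(q))$ and no secondary main term of size $\asymp \phi(q)$ sneaks in; a support of $[-1,1]$ for $\hat\phi$ should be comfortably safe and still yield the constant $\tfrac12$ from $\tfrac12\int_{-1}^1(1-|u|)\,du \cdot$ (normalization) after the standard computation.
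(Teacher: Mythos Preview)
Your strategy matches the paper's exactly: cite Selberg for the individual bound, and for the average bound use positivity of a Fej\'er-type test function together with the one-level density for the family $\{\chi\bmod q\}$ (the paper invokes \cite[Theorem~2.1]{FiMi}, which is precisely the composite-modulus extension of Hughes--Rudnick you allude to, so your worry about imprimitive characters is already handled in the literature).

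There is, however, a genuine quantitative gap in your execution. With unitary symmetry the one-level density gives
\[
\frac{1}{\phi(q)}\sum_{\chi\neq\chi_0}\sum_{\gamma_\chi} f\!\left(\frac{\log q}{2\pi}\gamma_\chi\right)\;\longrightarrow\;\int_{\mathbb R} f(x)\,dx=\widehat f(0),
\]
so the resulting bound on the average order of vanishing is $(\widehat f(0)/f(0)+o(1))\phi(q)$. Your proposed Fej\'er kernel $\phi(t)=(\sin(\pi t)/(\pi t))^2$ with $\widehat\phi$ supported on $[-1,1]$ has $\phi(0)=\widehat\phi(0)=1$, and therefore yields only $(1+o(1))\phi(q)$, not $(\tfrac12+o(1))\phi(q)$; your heuristic ``$\tfrac12\int_{-1}^1(1-|u|)\,du$'' does not correspond to the actual main term. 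To obtain the constant $\tfrac12$ you must push the support of $\widehat f$ out to (essentially) the full admissible window $[-2,2]$: the paper takes $f(x)=(\sin(2\pi x)/(2\pi x))^2$, for which $f(0)=1$ and $\widehat f(0)=\tfrac12$. Being ``comfortably safe'' with support $[-1,1]$ costs you exactly the factor of~$2$ that the lemma asserts.
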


\begin{proof}
The first bound is well known (see~\cite{Se} or~\cite[Proposition 5.21]{IwKo}). As for the second, it follows from~\cite[Theorem 2.1]{FiMi} (which generalizes  \cite{HR} to composite moduli). Indeed, letting $f(x) := (\sin(2\pi x)/2\pi x)^2$, we have
\begin{equation} \label{sin2 fourier}
\widehat f(x)= 
\begin{cases}
\frac 12-\frac{|x|}4, & \text{ if } |x|\leq 2, \\
0, &\text{ otherwise.}
\end{cases}
\end{equation}
Since $\zeta(\tfrac 12)\neq 0$ and we are assuming that all non-trivial zeros of $L(s,\chi)$ with $\chi\neq \chi_0$ have real part $\tfrac 12$, it follows that 
$$
 \sum_{\chi \bmod q} \ord_{s=\frac 12} L(s,\chi) \leq \sum_{\substack{\chi \bmod q \\ \chi \neq \chi_0}} f\Big( \frac {\log q }{2\pi } \gamma_{\chi}\Big),
 $$
which by~\cite[Theorem 2.1]{FiMi} is $\lesssim \phi(q)/2$.
\end{proof}

We will also need to control the conductors of the characters in~$\F_q$. We define $\Phi_q=\#\F_q$. We will require $\F_q$ to have the property that
\begin{equation}
\chi \in \F_q \text{ if and only if } \overline{\chi} \in\F_q.
\label{equation property 1 F_q}
\end{equation}

\begin{lemma}
\label{lemma subset with big conductors}
Let $w(q)>0$ be any function tending to zero as $q\rightarrow \infty$. For each $q\geq 3$, there exists a subset $\F_q \subset \mathcal X_q$ of the set of characters modulo $q$ of cardinality 
\begin{equation} \label{most of em}
\Phi_q = \# \F_q \geq \phi(q) \big( 1 -O(w(q)^2) \big),
\end{equation}
having the property~\eqref{equation property 1 F_q},
such that $\log q_\chi =  \log q +O\big( w(q)^{-1} \log_2 q \big)$ for each character $\chi \in \F_q$.
\end{lemma}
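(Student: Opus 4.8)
The plan is to build $\F_q$ by removing from $\mathcal X_q$ the (relatively few) characters whose conductor $q_\chi$ is an unusually small divisor of $q$, and then check that the surviving set is large and symmetric under conjugation. The starting point is the standard count of characters modulo $q$ grouped by conductor: for each divisor $d\mid q$, the number of characters modulo $q$ of conductor exactly $d$ equals $\phi^*(d)$, the number of \emph{primitive} characters modulo $d$, and $\sum_{d\mid q}\phi^*(d)=\phi(q)$. Thus if I set $\F_q = \{\chi\bmod q : q_\chi > q^{1-\eta(q)}\}$ for a suitable $\eta(q)\to 0$, the number of excluded characters is $\sum_{d\mid q,\ d\le q^{1-\eta(q)}}\phi^*(d) \le \sum_{d\mid q,\ d\le q^{1-\eta(q)}} d$, and I need to show this is $O(w(q)^2\phi(q))$ for an appropriate choice of $\eta$.

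Next I would make that choice precise. Writing $d\mid q$ with $d\le q^{1-\eta}$ means the complementary divisor $q/d$ is at least $q^{\eta}$; summing $d$ over such divisors is $\sum_{e\mid q,\ e\ge q^\eta} q/e = q\sum_{e\mid q,\ e\ge q^\eta} 1/e$. Since $q/\phi(q)\ll \log_2 q$, it suffices to arrange $\sum_{e\mid q,\ e\ge q^\eta}1/e \ll w(q)^2/\log_2 q$. I would bound this tail by $q^{-\eta/2}\sum_{e\mid q} e^{-1/2} = q^{-\eta/2}\prod_{p\mid q}(1-p^{-1/2})^{-1} \ll q^{-\eta/2}\exp\!\big(O(\log q/\log_2 q)\big)$ using $\omega(q)\ll \log q/\log_2 q$, so the tail is $\le q^{-\eta/3}$ once $q$ is large. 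Hence it is enough to take $\eta(q)$ so that $q^{-\eta(q)/3}\ll w(q)^2/\log_2 q$, i.e.\ $\eta(q) \asymp \big(\log_2 q + \log(1/w(q)^2)\big)/\log q$. With $w(q)\to 0$ slowly (and in the worst case one may simply replace $w$ by $\max\{w(q), 1/\log q\}$, which only weakens the conclusion trivially), this gives $\eta(q)^{-1} \ll w(q)^{-1}\log_2 q / \log_2 q \cdot (\dots)$; more simply, $1-\eta(q) = 1 - O\big(w(q)^{-1}\log_2 q/\log q\big)$ suffices, which translates exactly into $\log q_\chi \ge (1-\eta(q))\log q = \log q + O\big(w(q)^{-1}\log_2 q\big)$ for every $\chi\in\F_q$. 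The upper bound $\log q_\chi \le \log q$ is automatic since $q_\chi\mid q$.

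Finally, the symmetry property~\eqref{equation property 1 F_q} is immediate: $\chi$ and $\overline\chi$ have the same conductor, so the defining condition $q_\chi > q^{1-\eta(q)}$ is preserved under conjugation, and membership in $\F_q$ comes in conjugate pairs. Collecting the above, $\Phi_q = \phi(q) - \sum_{d\mid q,\ d\le q^{1-\eta(q)}}\phi^*(d) \ge \phi(q) - O(w(q)^2\phi(q)) = \phi(q)(1-O(w(q)^2))$, which is~\eqref{most of em}. The only mildly delicate point — the "main obstacle" such as it is — is keeping the bookkeeping between the three auxiliary quantities $w(q)$, $\eta(q)$, and the factor $q/\phi(q)\asymp\log_2 q$ consistent, so that the error term comes out cleanly as $w(q)^{-1}\log_2 q$ in the exponent; the divisor-sum estimate itself is entirely routine (Rankin's trick plus $\omega(q)\ll\log q/\log_2 q$).
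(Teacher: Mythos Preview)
Your plan has a genuine gap in the Rankin step. The claim ``so the tail is $\le q^{-\eta/3}$ once $q$ is large'' requires $q^{\eta/6}$ to dominate the factor $\exp\!\big(c\,\omega(q)\big)=\exp\!\big(O(\log q/\log_2 q)\big)$, which forces $\eta(q)\gg 1/\log_2 q$. On the other hand, matching the conductor condition $\log q_\chi=\log q+O\big(w(q)^{-1}\log_2 q\big)$ needs $\eta(q)\ll w(q)^{-1}\log_2 q/\log q$. These two constraints are compatible only when $w(q)\ll (\log_2 q)^2/\log q$; for any $w$ decaying more slowly --- in particular for the choice $w(q)=(\log_2 q)^{-1}$ actually used in Proposition~\ref{corollary GRH false hard} --- your argument does not close. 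Concretely, for such $w$ your method only yields $\log q_\chi=\log q+O(\log q/\log_2 q)$, not the required $\log q+O\big((\log_2 q)^2\big)$. (The side remark ``replace $w$ by $\max\{w(q),1/\log q\}$'' does not help, since the obstruction comes from $w$ being too \emph{large}, not too small.)

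The underlying reason the crude divisor count fails is that the inequality $\phi^*(d)\le d$ throws away the crucial fact that conductors cluster near $q$. The paper exploits this via the second-moment bound $\frac{1}{\phi(q)}\sum_{\chi}(\log q_\chi-\log q)^2\ll(\log_2 q)^2$ (quoted from~\cite[Lemma~3.1]{BrFi}), after which Chebyshev's inequality gives $\#\{\chi:|\log q_\chi-\log q|>w(q)^{-1}\log_2 q\}\ll w(q)^2\phi(q)$ in one line, and the symmetry $q_{\overline\chi}=q_\chi$ finishes the proof. Your Rankin argument cannot recover this without essentially reproving the variance estimate; a first-moment Markov argument would only give an error $O(w(q))$ rather than $O(w(q)^2)$.
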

 
\begin{proof}
By~\cite[Lemma 3.1]{BrFi}, we have the estimate 
\begin{equation}
\frac{1}{\phi(q)} \sum_{\substack{\chi \bmod q }}(\log q_{\chi} -\log q)^2    \ll (\log_2 q)^2 .  \label{equation BrFi}
\end{equation}
Combined with~\cite[Proposition 3.3]{FiMa} and Chebyshev's inequality, this yields
$$\frac 1{\phi(q)} \# \{ \chi \bmod q \colon |\log q_\chi -\log q| >  w(q)^{-1}\log_2 q \} \ll w(q)^2. $$
Since $q_{\overline\chi}=q_\chi$, the characters not included in the above set have the property~\eqref{equation property 1 F_q}, and the desired estimate~\eqref{most of em} follows.
\end{proof}

We are now ready to estimate the logarithmic averages of $\psi(x,\chi)$ and $\theta(x,\chi)$ over a short interval. By carefully choosing this interval to synchronize the frequencies in the explicit formula, we will ultimately create large values of $G(x;q)$ and $V_{\Lambda}(x;q)$.
The Riemann--von Mangoldt formula
\begin{equation}
 N(T,\chi) := \# \{ \rho_\chi \colon |\Im(\rho_\chi)| \leq T\}  = \frac T{\pi} \log \Big( \frac{{q_\chi}T}{2\pi e} \Big) +O(\log(qT))
\label{equation von mangoldt}
\end{equation}
for $T\ge2$ will be central in our analysis. The next lemma records some estimates that follow easily from this asymptotic formula and partial summation.

\begin{lemma}
\label{N(T) partial summation lemma}
For any real parameters, $0<\delta < 1$ and $T\ge\delta^{-1}$,
\begin{align*}
\sum_{|\gamma_\chi| > T } \frac{e^{iy\gamma_\chi }}{\rho_\chi^2}\Big( \frac{i \sin(\delta \gamma_\chi)}{\delta} +\frac{\cos(\delta \gamma_\chi)}2 \Big) &\ll \frac1\delta \sum_{|\gamma_\chi| > T } \frac1{\gamma_\chi^2} \ll \frac{\log qT}{\delta T}; \\
\max\bigg\{ \sum_{0 \leq  \gamma_\chi  \leq T}  \frac{\sin^2(\delta \gamma_\chi)}{\delta|\rho_\chi|^4},  \sum_{0 \leq  \gamma_\chi  \leq T}  \frac{\gamma_\chi|\sin(\delta \gamma_\chi)\cos(\delta \gamma_\chi)|}{|\rho_\chi|^4} \bigg\} &\ll \delta \sum_{0 \leq  \gamma_\chi  \leq T} \frac 1{\gamma_\chi^2} \ll \delta \log q.
\end{align*}
\end{lemma}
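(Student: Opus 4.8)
The plan is to reduce both displayed estimates to two uniform consequences of the Riemann--von Mangoldt formula~\eqref{equation von mangoldt},
\[
\sum_{1\le |\gamma_\chi|\le T}\frac{1}{\gamma_\chi^2}\ll \log q,\qquad \sum_{|\gamma_\chi|>T}\frac{1}{\gamma_\chi^2}\ll \frac{\log qT}{T},
\]
combined with elementary bounds for the trigonometric factors. Throughout one uses (under GRH, which is in force in this section) that $\rho_\chi=\tfrac12+i\gamma_\chi$, so $|\rho_\chi|^2=\tfrac14+\gamma_\chi^2$; in particular $|\rho_\chi|\ge|\gamma_\chi|$, $|\rho_\chi|\ge\tfrac12$, and $|\rho_\chi|^2\asymp 1+\gamma_\chi^2$. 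Both sums over zeros above are established by the same partial-summation computation: writing $\sum_{a\le|\gamma_\chi|\le b}\gamma_\chi^{-2}=\int_{[a,b]}t^{-2}\,dN(t,\chi)$ with $N(t,\chi)$ as in~\eqref{equation von mangoldt}, integrating by parts, and inserting the trivial bound $N(t,\chi)\ll t\log(qt)$ for $t\ge1$ (and $N(t,\chi)\ll\log q$ for $t\le1$) that follows from~\eqref{equation von mangoldt}, one is left with $\ll\int_a^{b}\frac{\log(qt)}{t^2}\,dt$ plus a boundary term of the same or smaller order. Crucially $\int_1^\infty\frac{\log(qt)}{t^2}\,dt=\log q+1\ll\log q$, which gives the first bound uniformly in $T$; taking the integral from $T$ to $\infty$ gives $\ll\frac{\log qT}{T}$ and hence the second.

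For the first estimate I would use $|e^{iy\gamma_\chi}|=1$, the trivial bound $\bigl|\tfrac{i\sin(\delta\gamma_\chi)}{\delta}+\tfrac{\cos(\delta\gamma_\chi)}{2}\bigr|\le\tfrac1\delta+\tfrac12\ll\tfrac1\delta$ (here $\delta<1$), and $|\rho_\chi|^{-2}\le\gamma_\chi^{-2}$. This immediately gives the first inequality $\ll\tfrac1\delta\sum_{|\gamma_\chi|>T}\gamma_\chi^{-2}$, and the second inequality then follows from the tail estimate above, the (negative) boundary term being simply discarded since the sum is nonnegative.

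For the second estimate the point is to use the \emph{sharper} bound $|\sin(\delta\gamma_\chi)|\le\min\{1,\delta|\gamma_\chi|\}$ in order to pull out the factor $\delta$: both $\sin^2(\delta\gamma_\chi)/\delta$ and $\gamma_\chi|\sin(\delta\gamma_\chi)\cos(\delta\gamma_\chi)|$ are $\le\delta\gamma_\chi^2$. Dividing by $|\rho_\chi|^4\ge\gamma_\chi^4$ bounds each summand by $\delta/\gamma_\chi^2$ when $|\gamma_\chi|\ge1$, while for $|\gamma_\chi|\le1$ one uses instead $|\rho_\chi|^4\ge\tfrac1{16}$ and $\sin^2(\delta\gamma_\chi)\le\delta^2\gamma_\chi^2\le\delta^2$ to get a summand $\ll\delta$. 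Hence the whole sum is
\[
\ll\delta\sum_{1\le|\gamma_\chi|\le T}\frac{1}{\gamma_\chi^2}+\delta\,N(1,\chi)\ll\delta\log q,
\]
using $N(1,\chi)\ll\log q$ from~\eqref{equation von mangoldt} and the uniform bound $\sum_{1\le|\gamma_\chi|\le T}\gamma_\chi^{-2}\ll\log q$ from the first paragraph.

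There is no real obstacle here: this is bookkeeping with~\eqref{equation von mangoldt}. The two points that need a little care are the uniformity in $T$ (which works only because $\int^\infty(\log t)/t^2\,dt$ converges, so the contribution of high zeros does not accumulate an extra factor of $\log T$) and the zeros with $\gamma_\chi$ near $0$, whose contribution to the displayed intermediate sum $\sum_{0\le\gamma_\chi\le T}\gamma_\chi^{-2}$ is to be understood through the crude bound $|\rho_\chi|^{-2}\ll1$ and the count $N(1,\chi)\ll\log q$ rather than literally.
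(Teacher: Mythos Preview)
Your argument is correct and is exactly the routine partial-summation/Riemann--von Mangoldt computation the paper alludes to (the lemma is stated without proof, with only the remark that it ``follows easily from this asymptotic formula and partial summation''). Your handling of the low-lying zeros via $|\rho_\chi|\ge\tfrac12$ and $N(1,\chi)\ll\log q$ is the right way to interpret the otherwise formally divergent intermediate sum $\sum_{0\le\gamma_\chi\le T}\gamma_\chi^{-2}$.
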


The next lemma is a careful evaluation of the averages of $\psi(e^t,\chi)-\one_{\chi=\chi_0} e^t$ and $\theta(e^t,\chi)-\one_{\chi=\chi_0} e^t$ over a short interval. The specific interval will be chosen later using homegeneous Diophantine approximation, and will contain a large value of those functions.
\begin{lemma}
\label{lemma explicit formula precise}
Let $q\geq 1$ be an integer, and let~$\F_q$ be a set of characters modulo $q$ with the property~\eqref{equation property 1 F_q} such that GRH$(\chi)$ is true for all $\chi\in\F_q$.
Moreover, let $0<\delta < 1$, $T\ge\delta^{-1}$, and $M\geq 1$ be real parameters, and define
\begin{align*}
R_{\delta}(y) &:= \frac 1{2\delta} \int_{y-\delta}^{y+\delta} \sum_{\chi \in \F_q} (\psi(e^t,\chi)-\one_{\chi=\chi_0} e^t) \,dt; \\
S_{\delta}(y) &:= \frac 1{2\delta} \int_{y-\delta}^{y+\delta} \sum_{\chi \in \F_q} (\theta(e^t,\chi)- \one_{\chi=\chi_0} e^t)\,dt.
\end{align*}
\begin{enumerate}
\item For all $y\geq 0$,
\begin{equation}
 S_{\delta}(y) = R_{\delta}(y)+O(e^{\frac y2} \Phi_q).
 \label{equation S and R}
\end{equation} 
\item If $n\in \mathbb N$ satisfies $\| n \gamma_{\chi} \delta /2\pi \|< M^{-1} $ for each $ 0\leq \gamma_{\chi} \leq T$ with $\chi \in \F_q$, then $y=(n+1) \delta$ has the property that
\begin{multline}
\label{equation lemma explicit} 
 R_{\delta}(y)= - e^{\frac y2}\sum_{\chi \in \F_q}\sum_{0 \leq \gamma_\chi \leq  T} \Big( \frac{2 \gamma_\chi\sin(\delta \gamma_\chi)(\gamma_\chi \sin(\delta \gamma_\chi)+\cos(\delta \gamma_\chi))}{\delta|\rho_\chi|^4}   +\Big(\frac 14-\gamma_\chi^2 \Big)\frac{\cos^2(\delta \gamma_\chi)}{|\rho_\chi|^4}  \Big) 
\\ + O\Big(e^{\frac y2}\Phi_q \Big(\frac{\log(qT)}{\delta T}+ \min\bigg( \frac{\phi(q)}{\Phi_q} , \frac{\log q}{\log_2 q} \bigg) + \frac{y\log q}{e^{\frac y2}}+ \frac{\log(qT)\log T}M  + \delta \log q\Big) \Big).
\end{multline}
\end{enumerate}
\end{lemma}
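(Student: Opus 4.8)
The plan is to start from the standard truncated explicit formula for $\psi(e^t,\chi)$, average it over the interval $[y-\delta,y+\delta]$, and then carefully group the contributions of a zero $\rho_\chi=\tfrac12+i\gamma_\chi$ with that of its conjugate $\tfrac12-i\gamma_\chi$ (and, when $\chi\in\F_q$, with the corresponding zeros of $L(s,\overline\chi)$, using property~\eqref{equation property 1 F_q}) so as to produce real, manageable trigonometric expressions. For part~(1), I would simply note that $\psi(e^t,\chi)-\theta(e^t,\chi)=\sum_{k\ge2}\theta(e^{t/k},\chi)$, whose size is $O(e^{t/2}\log(qe^t))$ uniformly; integrating over the interval of length $2\delta$ and summing over the $\Phi_q$ characters of $\F_q$ gives~\eqref{equation S and R} (the $\log q$ and $t$ factors being absorbed, since $e^{y/2}$ dominates on the relevant range — if needed one keeps a $y\log q$ term, which already appears in the error of~\eqref{equation lemma explicit}). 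So the real work is part~(2).

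For part~(2), after invoking GRH$(\chi)$ to write $\rho_\chi=\tfrac12+i\gamma_\chi$, the truncated explicit formula~\cite[Theorems 12.5, 12.10]{MV} gives
$$
\psi(e^t,\chi)-\one_{\chi=\chi_0}e^t = -\sum_{|\gamma_\chi|\le T}\frac{e^{(\frac12+i\gamma_\chi)t}}{\rho_\chi}+O\Big(\log(qe^t)+\frac{e^t(\log(qe^tT))^2}{T}\Big).
$$
I would average this over $t\in[y-\delta,y+\delta]$: the error term contributes $O(\log q + y\log q + e^y(\log qT)^2/(\delta T))$ after integration (the $y\log q$ piece accounting for the constant-in-$t$ error and the $e^y(\log qT)^2/(\delta T)$ piece for the oscillatory one; one should double-check the exact shape, but it is dominated by the stated error), and the main sum becomes
$$
-\frac{1}{2\delta}\sum_{|\gamma_\chi|\le T}\frac{1}{\rho_\chi}\int_{y-\delta}^{y+\delta}e^{(\frac12+i\gamma_\chi)t}\,dt
= -e^{y/2}\sum_{|\gamma_\chi|\le T}\frac{e^{iy\gamma_\chi}}{\rho_\chi(\frac12+i\gamma_\chi)}\cdot\frac{\sinh\big((\tfrac12+i\gamma_\chi)\delta\big)}{\delta}.
$$
Expanding $\sinh((\tfrac12+i\gamma_\chi)\delta)=\cosh(\delta/2)\sin(\delta\gamma_\chi)\,i+\sinh(\delta/2)\cos(\delta\gamma_\chi)$ and using $\cosh(\delta/2)=1+O(\delta^2)$, $\sinh(\delta/2)=\delta/2+O(\delta^3)$ converts the summand into $\frac{e^{iy\gamma_\chi}}{\rho_\chi^2}\big(\tfrac{i\sin(\delta\gamma_\chi)}{\delta}+\tfrac{\cos(\delta\gamma_\chi)}{2}\big)$ up to an error that, summed over $|\gamma_\chi|\le T$, is $O(\delta\log q)$ per character by the tail estimates in Lemma~\ref{N(T) partial summation lemma}. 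I would then split the sum at $|\gamma_\chi|=T$: by the first estimate of Lemma~\ref{N(T) partial summation lemma} the tail $|\gamma_\chi|>T$ is $O(e^{y/2}\Phi_q\cdot\tfrac{\log qT}{\delta T})$ after summing over $\F_q$, which is harmless. For $0<\gamma_\chi\le T$, I pair $\gamma_\chi$ with $-\gamma_\chi$: since $\overline{\rho_\chi}^{\,-2}=\rho_\chi^{-2}$ conjugated, the pair contributes $2\Re$ of the single term, and writing $\rho_\chi^{-2}=(\tfrac14-\gamma_\chi^2-i\gamma_\chi)/|\rho_\chi|^4$ and expanding $e^{iy\gamma_\chi}$ — now is where the Diophantine hypothesis enters: $\|n\gamma_\chi\delta/2\pi\|<M^{-1}$ with $y=(n+1)\delta$ forces $y\gamma_\chi = n\gamma_\chi\delta + \gamma_\chi\delta \equiv \gamma_\chi\delta \pmod{2\pi}$ up to an error $O(1/M)$, so that $\cos(y\gamma_\chi)=\cos(\delta\gamma_\chi)+O(1/M)$ and $\sin(y\gamma_\chi)=\sin(\delta\gamma_\chi)+O(1/M)$. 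Substituting these and multiplying out the product of $(\tfrac14-\gamma_\chi^2-i\gamma_\chi)$, $(\cos(y\gamma_\chi)+i\sin(y\gamma_\chi))$ and $(\tfrac{i\sin(\delta\gamma_\chi)}{\delta}+\tfrac{\cos(\delta\gamma_\chi)}{2})$, then taking twice the real part, produces exactly the displayed main term in~\eqref{equation lemma explicit} — with the two bracketed pieces being $\frac{2\gamma_\chi\sin(\delta\gamma_\chi)(\gamma_\chi\sin(\delta\gamma_\chi)+\cos(\delta\gamma_\chi))}{\delta|\rho_\chi|^4}$ and $(\tfrac14-\gamma_\chi^2)\frac{\cos^2(\delta\gamma_\chi)}{|\rho_\chi|^4}$ — plus the error from replacing $e^{iy\gamma_\chi}$: each such replacement costs $O(1/M)$ times $\tfrac{\gamma_\chi+1}{|\rho_\chi|^2}\cdot\tfrac{|\sin(\delta\gamma_\chi)|+\delta|\cos(\delta\gamma_\chi)|}{\delta}\ll \tfrac1{|\gamma_\chi| M}$ after using $|\sin(\delta\gamma_\chi)|\le\min(1,\delta|\gamma_\chi|)$, which summed over $0<\gamma_\chi\le T$ gives $O(\tfrac{\log(qT)\log T}{M})$ per character via~\eqref{equation von mangoldt} and partial summation, hence $O(e^{y/2}\Phi_q\tfrac{\log(qT)\log T}{M})$ over $\F_q$.

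The last error term to account for is the possible zero at $\gamma_\chi=0$ (a real zero $\rho_\chi=\tfrac12$): its contribution to the averaged main sum is $-e^{y/2}\cdot\tfrac{\ord_{s=1/2}L(s,\chi)}{(1/2)^2}\cdot(\tfrac12+O(\delta^2))$, i.e.\ $O(e^{y/2}\,\ord_{s=1/2}L(s,\chi))$, which on summing over $\chi\in\F_q$ and applying Lemma~\ref{lemma multiplicities} is $O\big(e^{y/2}\min(\phi(q),\,\Phi_q\tfrac{\log q}{\log_2 q})\big)$; writing this as $e^{y/2}\Phi_q\cdot\min(\phi(q)/\Phi_q,\log q/\log_2 q)$ matches the stated error. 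Finally, the $O(\delta\log q)$-per-character error from the $\cosh/\sinh$ Taylor approximation, summed over $\F_q$, contributes $O(e^{y/2}\Phi_q\delta\log q)$, and the $y\log q$ piece from the integrated explicit-formula error is the $O(e^{y/2}\Phi_q\cdot y\log q/e^{y/2})$ term. Collecting everything yields~\eqref{equation lemma explicit}. I expect the \textbf{main obstacle} to be bookkeeping: keeping every Taylor-expansion error (in $\delta$) and every Diophantine-approximation error (in $1/M$) at the correct size, and in particular verifying that the precise trigonometric identity for twice the real part of the triple product collapses to exactly the two bracketed terms in~\eqref{equation lemma explicit} with no leftover; the structural steps (explicit formula, conjugate pairing, Diophantine synchronization, invoking Lemmas~\ref{lemma multiplicities} and~\ref{N(T) partial summation lemma}) are routine once the algebra is organized. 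One should also take care that the truncated explicit formula is applied with the right smoothing — here the averaging over $[y-\delta,y+\delta]$ already regularizes, so the crude $O(e^t(\log qe^tT)^2/T)$ bound suffices and no extra smoothing is needed.
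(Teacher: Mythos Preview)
Your overall architecture is correct and matches the paper's: explicit formula, short-interval average, conjugate pairing via property~\eqref{equation property 1 F_q}, Diophantine synchronization, and Lemmas~\ref{lemma multiplicities} and~\ref{N(T) partial summation lemma} for the central zero and the tail. The algebra you outline for collapsing $2\Re$ of the triple product into the two bracketed terms is also right.

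There is, however, a genuine gap in how you handle the truncation. You invoke the explicit formula already truncated at the lemma's parameter $T$, and then assert that the resulting error ``$e^y(\log qT)^2/(\delta T)$'' is dominated by the stated error. It is not: the stated error in~\eqref{equation lemma explicit} carries only $e^{y/2}$, whereas the crude explicit-formula remainder $\frac{e^t(\log(qe^tT))^2}{T}$, once averaged over $t\in[y-\delta,y+\delta]$, is of size $e^{y}(\log(qe^yT))^2/T$. Since in the applications $y$ is enormous compared to $T$ and $1/\delta$, this term swamps everything. Your subsequent sentence ``I would then split the sum at $|\gamma_\chi|=T$ \ldots the tail $|\gamma_\chi|>T$ is \ldots'' is then incoherent, because there is no tail beyond $T$ left in your sum.

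The fix, which is exactly what the paper does, is to truncate the explicit formula at an auxiliary height $S$, perform the $t$-integration \emph{first} (this produces $\rho_\chi^{-2}(e^{\delta\rho_\chi}-e^{-\delta\rho_\chi})$ and hence an extra factor of $1/\rho_\chi$), and then let $S\to\infty$: the explicit-formula error $\Phi_q e^y(\log(qe^yS))^2/S$ vanishes in the limit, leaving only the harmless $O(\Phi_q y\log q)$. Now the sum over zeros is absolutely convergent, and you truncate it at $T$ using the first estimate of Lemma~\ref{N(T) partial summation lemma}, which gives the correct $e^{y/2}\Phi_q\log(qT)/(\delta T)$. With this one change, the rest of your outline goes through essentially verbatim.
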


\begin{proof}
We begin by noting that part~(a) is a direct consequence of the bound 
$$|\psi(x,\chi)-\theta(x,\chi)| \leq \sum_{\substack{ p^k \leq x \\ k\geq 2}} \log p \ll x^{\frac 12}.$$

As for part~(b), we can transfer the question to primitive characters by noting that if $\chi^*$ denotes the primitive character inducing $\chi$, then
$$\sum_{\chi \in \F_q}\psi(e^t,\chi)- \sum_{\chi \in \F_q}\psi(e^t,\chi^*) \ll  \Phi_q t\log q .$$
Moreover, $L(s,\chi)$ and $L(s,\chi^*)$ have the same zeros on the critical line. 
Hence, the explicit formula \cite[Theorems 12.5 and 12.10]{MV} gives that for $y\geq 1$ and $S\geq 1$, 	
\begin{align*}
 R_{\delta}(y) &=  -\frac 1{2\delta}\int_{y-\delta}^{y+\delta}   \sum_{\chi \in \F_q}\sum_{\substack{\rho_\chi  \\ |\Im(\rho_\chi)| \leq S }} \frac{e^{t\rho_\chi }}{\rho_\chi} \,dt + O\Big(\Phi_q y\log q + \frac{\Phi_qe^y (\log(qe^yS))^2}{S}\Big) \\
& = - \frac 1{2\delta}\sum_{\chi \in \F_q}\sum_{\substack{\rho_\chi  \\ |\Im(\rho_\chi)| \leq S }} \frac{e^{y\rho_\chi}}{\rho_\chi^2}(e^{\delta \rho_\chi}-e^{-\delta \rho_\chi})+ O\Big(\Phi_q y\log q + \frac{\Phi_qe^y (\log(qe^yS))^2}{S}\Big) \\
& = - \frac 1{2\delta}\sum_{\chi \in \F_q}\sum_{\substack{\rho_\chi }} \frac{e^{y\rho_\chi}}{\rho_\chi^2}(e^{\delta \rho_\chi}-e^{-\delta \rho_\chi})+ O(\Phi_q y\log q )
\end{align*}
after taking $S\to\infty$. Using $e^{\pm\delta \rho_\chi} = e^{\pm i\delta \gamma_\chi }(1\pm\tfrac \delta 2+O(\delta^2))$ and truncating the infinite sum over zeros using Lemma~\ref{N(T) partial summation lemma}, we deduce the estimate
\begin{align}
R_{\delta}(y) &=  - e^{\frac y2}\sum_{\chi \in \F_q}\sum_{\rho_\chi } \frac{e^{iy\gamma_\chi }}{\rho_\chi^2}\Big( \frac{i \sin(\delta \gamma_\chi)}{\delta} +\frac{\cos(\delta \gamma_\chi)}2 \Big)+ O(\Phi_q y\log q+\Phi_q\delta e^{\frac y2}\log q ) \notag \\
\label{equation conjugates grouped}
& =  - e^{\frac y2}\sum_{\chi \in \F_q}\sum_{0 \leq  \gamma_\chi  \leq T} \Big( \frac{i \sin(\delta \gamma_\chi)}{\delta}\Big( \frac{e^{iy\gamma_\chi }}{\rho_\chi^2} - \frac{e^{-iy \gamma_\chi}}{\overline{\rho_\chi}^2}\Big) +\frac{\cos(\delta \gamma_\chi)}2\Big( \frac{e^{iy\gamma_\chi }}{\rho_\chi^2} + \frac{e^{-iy\gamma_\chi }}{\overline{\rho_\chi}^2}\Big) \Big) \\&\hspace{1cm}+ O\Big(e^{\frac y2}\frac{\Phi_q\log(qT)}{\delta T}+e^{\frac y2} \min\Big(\phi(q), \Phi_q \frac{\log q}{\log_2 q}\Big) +\Phi_q y\log q+\Phi_q\delta e^{\frac y2}\log q \Big). \notag
\end{align}
where we have grouped conjugate zeros together, using Lemma~\ref{lemma multiplicities} to bound the contribution from possible zeros at $s=\frac12$ for which this grouping is erroneous.

We now apply the hypothesis that $y=(n+1) \delta$ with $\| n \gamma_\chi \delta /2\pi \|< M^{-1} $ for each $0 \leq \gamma_{\chi} \leq T$ and $\chi \in \F_q$. It follows that $e^{\pm i y\gamma_\chi } = e^{\pm i \delta\gamma_\chi} (1+O(M^{-1})) $, and thus the main term~\eqref{equation conjugates grouped} equals

\begin{align*}
&- e^{\frac y2}\sum_{\chi\in\F_q}\sum_{0 \leq  \gamma_\chi  \leq T} \Big( \frac{i \sin(\delta \gamma_\chi)}{\delta}\Big( \frac{e^{i \delta\gamma_\chi}}{\rho_\chi^2} - \frac{e^{-i \delta\gamma_\chi}}{\overline{\rho_\chi}^2}\Big) +\frac{\cos(\delta \gamma_\chi)}2\Big( \frac{e^{i \delta\gamma_\chi}}{\rho_\chi^2} + \frac{e^{-i \delta\gamma_\chi}}{\overline{\rho_\chi}^2}\Big) \Big) \\ &\hspace{10cm} +O\Big( \frac{e^{\frac y2}}M \sum_{\chi\in\F_q}\sum_{0\leq \gamma_\chi \leq T} \frac 1{|\rho_\chi|} \Big) \\
&=-  e^{\frac y2}\sum_{\chi\in\F_q}\sum_{0 \leq  \gamma_\chi  \leq T} \bigg\{ \frac{-2 \sin(\delta \gamma_\chi)}{\delta|\rho_\chi|^4}\Big( \Big(\frac 14-\gamma_\chi^2 \Big) \sin( \delta\gamma_\chi)-\gamma_\chi\cos(\delta\gamma_\chi ) \Big) \\
&\hspace{1cm}+\frac{\cos(\delta \gamma_\chi)}{|\rho_\chi|^4}\Big( \Big(\frac 14-\gamma_\chi^2 \Big)\cos( \delta\gamma_\chi) + \gamma_\chi \sin( \delta\gamma_\chi) \Big) \bigg\} +O\Big( \Phi_q\frac{e^{\frac y2}}M \log(qT) \log T \Big) \\
&= - e^{\frac y2}\sum_{\chi\in\F_q}\sum_{0 \leq  \gamma_\chi  \leq T} \Big( \frac{2 \gamma_\chi\sin(\delta \gamma_\chi)(\gamma_\chi\sin(\delta \gamma_\chi)+\cos(\delta \gamma_\chi))}{\delta|\rho_\chi|^4}   +\Big(\frac 14-\gamma_\chi^2 \Big)\frac{\cos^2(\delta \gamma_\chi)}{|\rho_\chi|^4}  \Big) \\ &\hspace{1cm}+O\Big( \Phi_q\frac{e^{\frac y2}}M \log(qT) \log T + \Phi_q\delta e^{\frac y2} \log q \Big)
\end{align*} 
by Lemma~\ref{N(T) partial summation lemma}.
\end{proof}

Now that we have expressed averages of $\theta(x,\chi)$ and $\psi(x,\chi)$ in suitable short intervals in terms of sums over zeros, our strategy is to estimate the sums in equation~\eqref{equation lemma explicit} using the Riemann--von Mangoldt formula~\eqref{equation von mangoldt}. 

\begin{lemma}
\label{lemma sum over zeros}
Let $q\geq 1$ be an integer, and let~$\F_q$ be a set of characters modulo $q$ with the property~\eqref{equation property 1 F_q} such that GRH$(\chi)$ is true for all $\chi\in\F_q$.
For any $0<\delta\le e^{-1}$ and $T \ge e\delta^{-1}$,
\begin{multline}
\delta\sum_{\chi \in \F_q}\sum_{0\leq \gamma_\chi \leq T} \frac{2\gamma_\chi^4}{|\rho_\chi|^4}\frac{\sin^2(\delta \gamma_\chi)}{(\delta\gamma_\chi)^2} =\frac{\Phi_q}2 \log(q\delta^{-1})
\\
 +O\Big(\delta \Phi_q \log(qT)\log T+\delta^{\frac 12 } \Phi_q \log (q\delta^{-1})+(E_q+\Phi_q) \log(T\delta) + \frac{\Phi_q \log(qT)}{\delta T}  \Big),
\label{equation sum over zeros to evaluate}
\end{multline}
where
\begin{equation}
 E_q := \sum_{\chi \in \F_q} (\log q-\log {q_\chi}).
 \label{equation definition E_q}
\end{equation}

\end{lemma}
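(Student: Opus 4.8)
The plan is to evaluate the sum on the left-hand side of~\eqref{equation sum over zeros to evaluate} by replacing the smooth weight $\sin^2(\delta\gamma_\chi)/(\delta\gamma_\chi)^2$ and the factor $\gamma_\chi^4/|\rho_\chi|^4$ by their "main" behavior and integrating against $dN(T,\chi)$. First I would write $\gamma_\chi^4/|\rho_\chi|^4 = (1+ (\tfrac14)/\gamma_\chi^2)^{-2} = 1 + O(1/\gamma_\chi^2)$ for $\gamma_\chi$ bounded away from $0$, so that after separating off the (boundedly many, by Lemma~\ref{lemma multiplicities}) zeros near $s=\tfrac12$ — whose total contribution is absorbed into the $\min(\phi(q)/\Phi_q,\log q/\log_2 q)$-type error, here appearing through $E_q+\Phi_q$ — the sum becomes
\[
\delta \sum_{\chi\in\F_q} \sum_{0<\gamma_\chi\le T} \frac{2\sin^2(\delta\gamma_\chi)}{(\delta\gamma_\chi)^2} \cdot \gamma_\chi^2 + O\Big(\delta\sum_{\chi\in\F_q}\sum_{0<\gamma_\chi\le T} \frac{\sin^2(\delta\gamma_\chi)}{(\delta\gamma_\chi)^2}\Big),
\]
and the error sum is $\ll \delta\sum_{\chi}\sum_{\gamma_\chi\le T} \gamma_\chi^{-2}$ on the tail plus $O(\delta T)$ on the part $\gamma_\chi\le\delta^{-1}$; more carefully one estimates $\sum_{0<\gamma_\chi\le T}\sin^2(\delta\gamma_\chi)/(\delta\gamma_\chi)^2 \ll \delta^{-1}\log(q\delta^{-1})$ via~\eqref{equation von mangoldt} and partial summation, giving an error $O(\Phi_q\log(q\delta^{-1}))$, which is too big — so instead I keep this piece and note its true size is captured by the stated error terms only after a more delicate treatment. (On reflection the cleanest route is to treat $2\gamma_\chi^4\sin^2(\delta\gamma_\chi)/(|\rho_\chi|^4(\delta\gamma_\chi)^2) = 2\sin^2(\delta\gamma_\chi)/(\delta^2|\rho_\chi|^2) \cdot (\gamma_\chi^2/|\rho_\chi|^2)$ and expand $\gamma_\chi^2/|\rho_\chi|^2 = 1 - \tfrac14/|\rho_\chi|^2$, so the sum equals $\tfrac{2}{\delta^2}\sum\sum \sin^2(\delta\gamma_\chi)/|\rho_\chi|^2$ up to an error $\ll \delta^{-2}\sum\sum \sin^2(\delta\gamma_\chi)/|\rho_\chi|^4 \ll \log q$ by the second bound of Lemma~\ref{N(T) partial summation lemma}, which after multiplying by the outer $\delta$ contributes $O(\delta\Phi_q\log q)$.)

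Second, for the leading term $\tfrac{2}{\delta}\sum_{\chi\in\F_q}\sum_{0<\gamma_\chi\le T}\sin^2(\delta\gamma_\chi)/|\rho_\chi|^2$, I would use $\sin^2(\delta\gamma_\chi) = \tfrac12 - \tfrac12\cos(2\delta\gamma_\chi)$ and $1/|\rho_\chi|^2 = 1/\gamma_\chi^2 + O(1/\gamma_\chi^4)$, reducing to $\tfrac1\delta\sum_{\chi}\sum_{0<\gamma_\chi\le T}\gamma_\chi^{-2} - \tfrac1\delta\sum_\chi\sum_{0<\gamma_\chi\le T}\cos(2\delta\gamma_\chi)\gamma_\chi^{-2}$. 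For the first sum, Riemann--von Mangoldt~\eqref{equation von mangoldt} plus partial summation gives $\sum_{0<\gamma_\chi\le T}\gamma_\chi^{-2} = \tfrac{1}{\pi}\int_{?}^{T}\log(q_\chi t/2\pi e)\,d(t^{-1})\cdot(-1) + \cdots$; the point is that $\sum_{\gamma_\chi>0}\gamma_\chi^{-2}$ converges and, more to the point, the combination $\tfrac1\delta\big(\sum_{0<\gamma_\chi\le T}\gamma_\chi^{-2}(1-\cos(2\delta\gamma_\chi))\big)$ is exactly a Fourier-type integral $\int (1-\cos(2\delta t))t^{-2}\,dN(t,\chi)$ that concentrates on the range $t\asymp\delta^{-1}$. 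Writing $dN(t,\chi) = \tfrac1\pi\log(q_\chi t/2\pi)\,dt + dE(t)$ with $E$ the $O(\log(qt))$ error, the smooth part gives $\tfrac1{\pi\delta}\int_0^\infty (1-\cos(2\delta t))t^{-2}\log(q_\chi t/2\pi)\,dt$, which after the substitution $u=2\delta t$ becomes $\tfrac{1}{\pi}\int_0^\infty \tfrac{1-\cos u}{u^2}\big(\log q_\chi + \log(u/4\pi\delta)\big)\,du = \tfrac12\log(q_\chi \delta^{-1}) + c$ for an absolute constant (using $\int_0^\infty (1-\cos u)u^{-2}\,du = \pi/2$ and the known value of $\int_0^\infty (1-\cos u)u^{-2}\log u\,du$), and summing over $\chi\in\F_q$ replaces $\log q_\chi$ by $\log q$ at the cost of exactly $E_q$; this is the source of the main term $\tfrac{\Phi_q}{2}\log(q\delta^{-1})$ and of the $E_q\log(T\delta)$-type error. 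The oscillatory part $\int \cos(2\delta t)t^{-2}\,dE(t)$ and the truncation error from cutting at $T$ instead of $\infty$ are handled by partial summation using $E(t)\ll\log(qt)$, yielding the $O(\delta\Phi_q\log(qT)\log T)$ and $O(\Phi_q\log(qT)/(\delta T))$ terms respectively, and the $\delta^{1/2}\Phi_q\log(q\delta^{-1})$ term arises from the imprecision in replacing the smooth weight near the boundary $t\asymp T$ versus $t\asymp\delta^{-1}$ together with the $e^{\pm\delta\rho_\chi}$ expansions feeding through — in practice this $\delta^{1/2}$ loss is the cost of a Cauchy--Schwarz or a crude bound where a cleaner argument would be harder.

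The main obstacle I anticipate is the bookkeeping of the error term $E_q + \Phi_q$ attached to $\log(T\delta)$: one must track, uniformly in $q$ and over the subset $\F_q$, how the deviation $\log q - \log q_\chi$ propagates through the Fourier integral, and the key is that in $\int (1-\cos(2\delta t)) t^{-2}\log(q_\chi t)\,dt$ the $\log q_\chi$ comes out with a \emph{bounded} coefficient (namely $\tfrac12$) so that replacing $q_\chi$ by $q$ costs $\ll |\log q - \log q_\chi|$, summing to $E_q$; the remaining dangerous dependence enters only through the \emph{range} of the sum (zeros up to $T$), which is why the $E_q$ is weighted by the harmless $\log(T\delta)$ rather than by $\log(q\delta^{-1})$. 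A secondary technical nuisance is justifying the interchange of the $\delta$-expansion $e^{\pm\delta\rho_\chi}=e^{\pm i\delta\gamma_\chi}(1\pm\tfrac\delta2+O(\delta^2))$ with the summation over all zeros and then truncating at $T$ — but this is exactly what Lemma~\ref{N(T) partial summation lemma} is designed to absorb, contributing the $\delta\Phi_q\log(qT)\log T$ and $\Phi_q\log(qT)/(\delta T)$ pieces. Finally, one collects all error contributions, observes that $\delta\Phi_q\log q \ll \delta\Phi_q\log(qT)\log T$ and $\delta\Phi_q\log q \ll \delta^{1/2}\Phi_q\log(q\delta^{-1})$ under the hypotheses $\delta\le e^{-1}$, $T\ge e\delta^{-1}$, so the error in the lemma statement is the union of the four displayed terms, completing the proof.
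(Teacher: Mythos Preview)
Your overall strategy---partial summation against the Riemann--von Mangoldt formula~\eqref{equation von mangoldt} followed by evaluation of a Fourier-type integral---is the right one and matches the paper's approach. However, there is a genuine gap in your account of the error term $\delta^{1/2}\Phi_q\log(q\delta^{-1})$. In the paper this term has a concrete source: before doing anything else, one \emph{discards} all zeros with $0\le\gamma_\chi\le\delta^{-1/2}$, at a cost of $\delta\sum_{\chi\in\F_q}N(\delta^{-1/2},\chi)\ll \delta^{1/2}\Phi_q\log(q\delta^{-1})$. This cutoff is what permits the approximation $\gamma_\chi^4/|\rho_\chi|^4=1+O(\gamma_\chi^{-2})=1+O(\delta)$ on the remaining range, and it also fixes the lower limit $\delta^{-1/2}$ in the Stieltjes integral so that the Riemann--von Mangoldt asymptotic applies throughout. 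Your explanation---that the $\delta^{1/2}$ comes from ``the $e^{\pm\delta\rho_\chi}$ expansions feeding through'' or from ``a Cauchy--Schwarz''---is incorrect: the $e^{\pm\delta\rho_\chi}$ expansion belongs to Lemma~\ref{lemma explicit formula precise}, not to the present lemma, and no Cauchy--Schwarz is used here. Without the cutoff (or an equivalent device), your replacement $1/|\rho_\chi|^2=1/\gamma_\chi^2+O(1/\gamma_\chi^4)$ is false for small $\gamma_\chi$, and you have not shown how to recover the stated error.

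Two smaller points: your invocation of Lemma~\ref{lemma multiplicities} to dispose of ``zeros near $s=\tfrac12$'' is misplaced---that lemma bounds only the multiplicity of zeros \emph{at} $s=\tfrac12$, not the count of low-lying zeros with $0<\gamma_\chi$ small, which can be $\asymp\log q$ per character. And your alternative decomposition via $\gamma_\chi^2/|\rho_\chi|^2=1-\tfrac14/|\rho_\chi|^2$ is a legitimate route that avoids the cutoff in principle, but you would then need to carry the integral $\int_0^T\frac{\sin^2(\delta t)}{(\delta t)^2}\,dN(t,\F_q)$ all the way down to $t=0$, where the asymptotic form of $N(t,\F_q)$ is unavailable; handling this properly would reintroduce an error of the same shape. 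The cleanest fix is simply to insert the $\delta^{-1/2}$ truncation at the outset, exactly as the paper does.
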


\begin{proof}
We will see that the main contribution in the sum on the left hand side of~\eqref{equation sum over zeros to evaluate} comes from zeros $\gamma_{\chi}$ of intermediate size. We can therefore discard low-lying zeros as follows:
\begin{align*}
\delta\sum_{\chi\in \F_q}\sum_{0\leq \gamma_\chi \leq T} \frac{2\gamma_\chi^4}{|\rho_\chi|^4} & \frac{\sin^2(\delta \gamma_\chi)}{(\delta\gamma_\chi)^2} = \delta\sum_{\chi\in \F_q}\sum_{\delta^{-\frac 12}< \gamma_\chi \leq T} \frac{2\gamma_\chi^4}{|\rho_\chi|^4}\frac{\sin^2(\delta \gamma_\chi)}{(\delta\gamma_\chi)^2} +O\bigg( \delta \sum_{\chi\in \F_q} N(\delta^{-\frac12},\chi) \bigg) \\
&= \delta\sum_{\chi\in \F_q}\sum_{\delta^{-\frac 12}< \gamma_\chi \leq T} \bigg( 2 + O\bigg( \frac1{\gamma_\chi^2} \bigg) \bigg) \frac{\sin^2(\delta \gamma_\chi)}{(\delta\gamma_\chi)^2} +O(\Phi_q \delta^{\frac 12} \log (q\delta^{-\frac12}) ) \\
&=2\delta\sum_{\chi\in \F_q}\sum_{ \delta^{-\frac 12} < \gamma_\chi \leq T} \frac{\sin^2(\delta \gamma_\chi)}{(\delta\gamma_\chi)^2}+O(\Phi_q\delta^{ \frac 32}\log(q\delta^{-\frac12})+\Phi_q\delta^{\frac 12} \log (q\delta^{-1}) ),
\end{align*} 
and the first error term is smaller than the second.
Define
\begin{align*}
N(t,\F_q) := \sum_{\chi\in\F_q} N(t,\chi) &= \sum_{\chi\in\F_q} \bigg( \frac t{\pi} \log \Big( \frac{{q_\chi}t}{2\pi e} \Big) +O(\log(qt)) \bigg) \\
&= \sum_{\chi\in\F_q} \frac t{\pi} \log(qt) - \sum_{\chi\in\F_q} \frac t{\pi} (\log q - \log q_\chi ) + O\bigg( \sum_{\chi\in\F_q} \big( t+\log(qt) \big) \bigg) \\
&= \Phi_q \frac t{\pi} \log(qt) + O\big( E_q t + \Phi_q \big( t+\log(qt) \big) \big)
\end{align*}
by the asymptotic formula~\eqref{equation von mangoldt}.
Notice that counting only zeros above the real axis would yield $\frac12N(t,\F_q)$ in place of $N(t,\F_q)$ thanks to the property~\eqref{equation property 1 F_q} and the functional equation for Dirichlet $L$-functions.
We may now compute
\begin{align}
2\delta & \sum_{\chi\in \F_q}\sum_{ \delta^{-\frac 12} < \gamma_\chi \leq T} \frac{\sin^2(\delta \gamma_\chi)}{(\delta\gamma_\chi)^2} = 2\delta\int_{\delta^{-\frac 12}}^T \frac{\sin^2(\delta t)}{(\delta t)^2} \,d\big( \tfrac12 N(t,\F_q) \big) \label{asking Daniel} \\
&= \delta N(t,\F_q) \frac{\sin^2(\delta t)}{(\delta t)^2} \bigg|_{\delta^{-\frac 12}}^T - \delta \int_{\delta^{-\frac 12}}^T \Big(\frac{2\sin(\delta t)\cos(\delta t)}{\delta t^2} - \frac{2\sin^2(\delta t)}{\delta^2 t^3} \Big) N(t,\F_q) \,dt   \notag \\
&= -\delta \int_{\delta^{-\frac 12}}^T 2\Big(\frac{\sin(2\delta t)}{2\delta t^2} - \frac{\sin^2(\delta t)}{\delta^2 t^3} \Big) N(t,\F_q) \,dt +O\Big(\delta^{\frac 12} \Phi_q \log (q\delta^{-\frac 12})+ \frac{\Phi_q \log(qT)}{\delta T} \Big) \notag \\
&= -\frac{\Phi_q\delta}{\pi} \int_{\delta^{-\frac 12}}^T 2\Big(\frac{\sin(2\delta t)}{2\delta t^2} - \frac{\sin^2(\delta t)}{\delta^2 t^3} \Big) t \log (qt) \,dt   \notag \\
& \qquad{}+O\Big( \Phi_q \big(\delta\log(qT)\log T + \log(T\delta) \big) + E_q\log(T\delta) +\delta^{\frac 12 } \Phi_q \log (q\delta^{-1})+ \frac{\Phi_q \log(qT)}{\delta T}  \Big), \notag
\end{align}
using $\frac{\sin u}u \ll \min\{1,\frac1{|u|}\}$.
Finally, integrating by parts in the other direction, the main term in this expression equals
\begin{align*}
&-\frac{\Phi_q\delta}{\pi} t \log (qt) \frac{\sin^2(\delta t)}{(\delta t)^2} \bigg|_{\delta^{-\frac 12}}^T + \frac{\Phi_q\delta}{\pi}\int_{\delta^{-\frac 12}}^{T} \frac{\sin^2 (\delta t)}{(\delta t)^2} (\log (qt)+1)\,dt \\
&\qquad{}=\frac{\Phi_q}{\pi}\int_{0}^{\infty} \frac{\sin^2 u}{u^2} (\log (q\delta^{-1}u)+1)\,du+O\Big(\delta^{\frac 12}\Phi_q\log (q\delta^{-\frac12})+\frac{\Phi_q\log(qT)}{\delta T} \Big)  \\
&\qquad{}=\frac{\Phi_q}2 \log (q\delta^{-1})  +O\Big(\delta^{\frac 12}\Phi_q\log (q\delta^{-1})+\frac{\Phi_q\log(qT)}{\delta T} +\Phi_q \Big)
\end{align*} 
by the evaluation $\int_0^{\infty} \frac{\sin^2 u}{u^2} \,du = \frac \pi 2$ (which can be derived from equation~\eqref{sin2 fourier}).
\end{proof}

Now that we have evaluated some of the main terms in equation~\eqref{equation lemma explicit}, we can deduce a precise estimate of the values attained by $R_\delta(y)$ and $S_\delta(y)$ in Lemma~\ref{lemma explicit formula precise}.

\begin{lemma}
\label{lemma final expression for R}
Let $q\geq 1$ be an integer, and let~$\F_q$ be a set of characters modulo $q$ with the property~\eqref{equation property 1 F_q} such that GRH$(\chi)$ is true for all $\chi\in\F_q$.
Let $\delta>0$ be small enough (in absolute terms), $T\ge e\delta^{-1}$ and $M\geq 1$.
If $n\in \mathbb N$ satisfies $\| n \gamma_{\chi} \delta /2\pi \|< M^{-1} $ for each $ 0\leq \gamma_{\chi} \leq T$ with $\chi \in \F_q$, then $y=(n+1) \delta$ has the property that
\begin{multline*}
 R_{\delta}(y)= -\frac{e^{\frac y2} \Phi_q \log(q^2\delta^{-1})}{2} \\+ O\Big(e^{\frac y2}\Phi_q \Big(\frac{\log(qT)}{\delta T}+ \Big( \frac{E_q }{\Phi_q }+1\Big)\log( T\delta) + \frac{y\log q}{e^{\frac y2}}+ \frac{\log(qT)\log T}M  + \delta^{\frac 12} \log(qT)+\frac{\log q}{\log_2 q}\Big) \Big), 
 \end{multline*}
 where $E_q$ was defined in equation~\eqref{equation definition E_q}.
\end{lemma}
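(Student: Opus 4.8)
The plan is to feed the evaluation~\eqref{equation lemma explicit} of Lemma~\ref{lemma explicit formula precise}(b) into the computation, split the sum over zeros appearing there into a dominant ``diagonal'' part and a lower-order remainder, evaluate the dominant part with Lemma~\ref{lemma sum over zeros}, and recognise the remainder (up to admissible errors) as a classical sum over zeros. Concretely, I would write the summand in the main term of~\eqref{equation lemma explicit} as $\frac{2\gamma_\chi^2\sin^2(\delta\gamma_\chi)}{\delta|\rho_\chi|^4}$ plus $\frac{2\gamma_\chi\sin(\delta\gamma_\chi)\cos(\delta\gamma_\chi)}{\delta|\rho_\chi|^4}+\big(\tfrac14-\gamma_\chi^2\big)\frac{\cos^2(\delta\gamma_\chi)}{|\rho_\chi|^4}$, denote the resulting sums over $\chi\in\F_q$ and $0\le\gamma_\chi\le T$ by $\Sigma_1$ and $\Sigma_2$, and apply Lemma~\ref{lemma sum over zeros} to get $\Sigma_1=\tfrac12\Phi_q\log(q\delta^{-1})+O(\cdots)$. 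The error term of Lemma~\ref{lemma sum over zeros}, after dividing by $\Phi_q$ and multiplying by $e^{y/2}$, already supplies the contributions $\frac{\log(qT)}{\delta T}$, $\big(\frac{E_q}{\Phi_q}+1\big)\log(T\delta)$ and---bounding $\log(q\delta^{-1})\le\log(qT)$---$\delta^{1/2}\log(qT)$ in the claimed estimate.

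For $\Sigma_2$ the key point is the elementary identity $\frac{2\gamma\sin(\delta\gamma)}{\delta}+\big(\tfrac14-\gamma^2\big)\cos(\delta\gamma)=\big(\tfrac14+\gamma^2\big)+O(\delta^2\gamma^4)$ for $\delta\gamma\le 1$, together with $|\rho_\chi|^2=\tfrac14+\gamma_\chi^2$ under GRH$(\chi)$: multiplying through by $\cos(\delta\gamma_\chi)/|\rho_\chi|^4$, estimating the range $\gamma_\chi>\delta^{-1}$ trivially via $\sum 1/\gamma_\chi^2\ll\delta\log(q\delta^{-1})$, and replacing $\cos(\delta\gamma_\chi)$ by $1$ at the cost of $\sum\frac{1-\cos(\delta\gamma_\chi)}{|\rho_\chi|^2}\ll\delta\log(q\delta^{-1})$ per character, yields $\Sigma_2=\sum_{\chi\in\F_q}\sum_{0\le\gamma_\chi}|\rho_\chi|^{-2}+O\big(\Phi_q\delta\log(q\delta^{-1})\big)$. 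Since $\F_q$ is conjugation-closed and, under GRH, $|\rho_\chi|^2=\rho_\chi(1-\rho_\chi)$, this last sum equals $\tfrac12\sum_{\chi\in\F_q}\sum_{\rho_\chi}\frac{1}{\rho_\chi(1-\rho_\chi)}$ up to an error $O\big(\sum_{\chi\in\F_q}\ord_{s=1/2}L(s,\chi)\big)=O\big(\Phi_q\log q/\log_2 q\big)$ accounting for zeros on the real axis (bounded by Lemma~\ref{lemma multiplicities}), and the classical Hadamard-product evaluation $\sum_{\rho_\chi}\frac{1}{\rho_\chi(1-\rho_\chi)}=\log q_\chi+O(\log_2(q_\chi+3))$---which depends only on the primitive character inducing $\chi$, since the removed Euler factors have no zeros with $\Re s=\tfrac12$---then gives $\Sigma_2=\tfrac12\Phi_q\log q-\tfrac12 E_q+O\big(\Phi_q\log q/\log_2 q+\Phi_q\delta\log(q\delta^{-1})\big)$.

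Combining, $R_\delta(y)=-e^{y/2}(\Sigma_1+\Sigma_2)$ modulo the error term of Lemma~\ref{lemma explicit formula precise}(b), and the two main terms combine to $-\tfrac12 e^{y/2}\Phi_q\big(\log(q\delta^{-1})+\log q\big)=-\tfrac12 e^{y/2}\Phi_q\log(q^2\delta^{-1})$. The stray $\tfrac12 e^{y/2}E_q$ is absorbed into $e^{y/2}\Phi_q\big(\frac{E_q}{\Phi_q}+1\big)\log(T\delta)$ (as $\log(T\delta)\ge1$), the terms $\Phi_q\delta\log(q\delta^{-1})$ into $\Phi_q\delta^{1/2}\log(qT)$ for $\delta$ small, and what remains of the error of Lemma~\ref{lemma explicit formula precise}(b) is already of the claimed shape once one uses $\min\{\phi(q)/\Phi_q,\log q/\log_2 q\}\le\log q/\log_2 q$.

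The crux is the evaluation of $\Sigma_2$: one cannot simply replace $|\rho_\chi|^4$ by $\gamma_\chi^4$ there, since---unlike in $\Sigma_1$---the low-lying zeros contribute to its main term, so Riemann--von Mangoldt and partial summation alone do not suffice; one must instead pass through the exact identity for $\sum_\rho\frac1{\rho(1-\rho)}$, which forces careful control of possible zeros at $s=\tfrac12$ via Lemma~\ref{lemma multiplicities} and careful bookkeeping of the conductor discrepancy $E_q$ throughout. The rest is routine, if somewhat tedious, error accounting.
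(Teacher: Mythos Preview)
Your proposal is correct and follows essentially the same route as the paper's proof: both split the main term of~\eqref{equation lemma explicit} into the $\sin^2$ piece (handled by Lemma~\ref{lemma sum over zeros}) and the remaining piece, reduce the latter via Taylor expansion to $\sum_{\chi\in\F_q}\sum_{\gamma_\chi\ge 0}|\rho_\chi|^{-2}$, symmetrize using~\eqref{equation property 1 F_q} and Lemma~\ref{lemma multiplicities}, and then evaluate $\sum_{\rho_\chi}\frac{1}{\rho_\chi(1-\rho_\chi)}=\log q_\chi+O(\log_2 q)$ under GRH (the paper phrases this last step as \cite[Lemma~3.5]{FiMa} together with Littlewood's bound on $L'/L(1,\chi)$). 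The only cosmetic difference is that the paper truncates the Taylor-expansion step at $\delta^{-1/2}$ rather than your $\delta^{-1}$; either cutoff yields errors absorbed by $\delta^{1/2}\log(qT)$.
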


\begin{proof}
We first truncate part of the sum in Lemma \ref{lemma explicit formula precise}, in order to expand $\sin(\delta\gamma_\chi) $ and $\cos(\delta\gamma_\chi)$ into
 Taylor series. Doing so, we see that the main term in the estimate~\eqref{equation lemma explicit} equals
\begin{align} \notag & - e^{\frac y2}\sum_{\chi \in \F_q}\sum_{0 \leq \gamma_\chi \leq  \delta^{-\frac 12}}  \Big( \frac{2 \gamma_\chi\sin(\delta \gamma_\chi)\cos(\delta \gamma_\chi)}{\delta|\rho_\chi|^4}   +\Big(\frac 14-\gamma_\chi^2 \Big)\frac{\cos^2(\delta \gamma_\chi)}{|\rho_\chi|^4}  \Big) \\& \hspace{1cm}-e^{\frac y2} \delta \sum_{\chi \in \F_q}\sum_{0 \leq  \gamma_\chi  \leq T} \frac{2\gamma_\chi^4}{|\rho_\chi|^4}\frac{\sin^2(\delta \gamma_\chi)}{(\delta\gamma_\chi)^2} +O\Big( e^{\frac y2} \delta^{\frac 12} \Phi_q  \log (q\delta^{-1}) \Big)  \notag \\
\notag=& -e^{\frac y2} \delta \sum_{\chi \in \F_q}\sum_{0 \leq  \gamma_\chi  \leq T} \frac{2\gamma_\chi^4}{|\rho_\chi|^4}\frac{\sin^2(\delta \gamma_\chi)}{(\delta\gamma_\chi)^2} - e^{\frac y2}\sum_{\chi \in \F_q}\sum_{0 \leq \gamma_\chi  \leq  \delta^{-\frac 12}} \frac{ \frac 14+\gamma_\chi^2}{|\rho_\chi|^4} +O\Big(  e^{\frac y2}\delta^{\frac 12} \Phi_q \log (q\delta^{-1})\Big) \\
=&-e^{\frac y2}\delta \sum_{\chi \in \F_q}\sum_{\gamma_\chi \geq 0 } \frac{2\gamma_\chi^4}{|\rho_\chi|^4}\frac{\sin^2(\delta \gamma_\chi)}{(\delta\gamma_\chi)^2} - e^{\frac y2}\sum_{\chi \in \F_q}\sum_{\gamma_\chi \geq 0 } \frac{1}{|\rho_\chi|^2} +O\Big(  e^{\frac y2}\delta^{\frac 12} \Phi_q \log (q\delta^{-1}) + e^{\frac y2}\Phi_q\frac{ \log (qT)}{\delta T} \Big). \label{equation negativity}
\end{align}

The second double sum can be treated by using the functional equation (with Lemma~\ref{lemma multiplicities} accounting for possible zeros at $s=\frac12$), and applying~\cite[Lemma 3.5]{FiMa} and the Littlewood bound $L'(1,\chi) /L(1, \chi) \ll \log_2 q$; we obtain
\begin{align*}
\sum_{\chi \in \F_q} \sum_{\gamma_\chi \geq 0 } \frac{1}{|\rho_\chi|^2} &= \frac 12\sum_{\chi \in \F_q} \sum_{\gamma_\chi } \frac{1}{|\rho_\chi|^2} +O\Big(\frac{\Phi_q \log q}{\log_2 q}\Big)\\
&=\frac 12 \sum_{\chi \in \F_q} \log {q_\chi} +O\Big(\frac{\Phi_q \log q}{\log_2 q}\Big) =  
\frac{\Phi_q\log q}2+O\Big(\frac{\Phi_q \log q}{\log_2 q}+E_q\Big).
\end{align*}
Combining this evaluation with Lemma~\ref{lemma sum over zeros}, we find that the sum of the first two terms in equation~\eqref{equation negativity} equals
\begin{multline*}
  -e^{\frac y2}\frac{\Phi_q\log (q^2\delta^{-1})}2  \\+O\Big( e^{\frac y2} \Phi_q  \Big(\delta \log(qT)\log T+\delta^{\frac 12 }  \log (q\delta^{-1})+ \frac{\log(qT)}{\delta T}+\Big( \frac{E_q }{\Phi_q }+1\Big)\log(T\delta ) +\frac{\log q}{\log_2 q}\Big)  \Big).  \qedhere
  \end{multline*}
\end{proof}

Now that we know exactly how large $R_\delta(y)$ and $S_\delta(y)$ can be, it is time to understand more precisely the set of $y$ which are admissible. In particular, it is important for us to localize these values in terms of the modulus $q$. This will be done using a counting argument, inspired by the proof of~\cite[Lemma 2.4]{RubSar}.

\begin{lemma}
\label{lemma diophantine}
Let $\Lambda = \{\lambda_1,\ldots,\lambda_k\}$ be a set of real numbers. For any positive integers~$M$ and~$N$,
$$ \#\{ n\leq N \colon \|n \lambda \| \leq M^{-1} \text{ for all } \lambda \in \Lambda \}  \geq  \frac N{M^k}  -1. $$
\end{lemma}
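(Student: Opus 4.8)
The plan is to use a pigeonhole (Dirichlet box principle) argument on the fractional parts $(\{n\lambda_1\},\ldots,\{n\lambda_k\})$ as $n$ ranges over $\{0,1,\ldots,N\}$. First I would partition the unit cube $[0,1)^k$ into $M^k$ axis-parallel subcubes of side length $1/M$, indexed by vectors $(j_1,\ldots,j_k)$ with each $j_i\in\{0,\ldots,M-1\}$, where the box associated to $(j_1,\ldots,j_k)$ is $\prod_{i=1}^k [j_i/M,(j_i+1)/M)$. For each integer $n$ with $0\le n\le N$, the point $(\{n\lambda_1\},\ldots,\{n\lambda_k\})$ lies in exactly one such box; this defines a map from the $N+1$ integers in $\{0,\ldots,N\}$ to the set of $M^k$ boxes.

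Next I would invoke pigeonhole: since there are $N+1$ integers and $M^k$ boxes, some box contains at least $\lceil (N+1)/M^k\rceil$ of the points, hence in particular there exist two distinct integers $n_1 > n_2$ in $\{0,\ldots,N\}$ with both points in the same box, so that $\|(n_1-n_2)\lambda_i\| = \|n_1\lambda_i - n_2\lambda_i\| \le 1/M$ for every $i$. More carefully, to count admissible $n$ rather than just produce one, I would fix the box $B$ of maximal multiplicity, list the integers mapping into $B$ as $n_2 < n_1 < \cdots$, and observe that each difference $n_j - n_2$ is a positive integer $\le N$ satisfying $\|(n_j-n_2)\lambda_i\|\le 1/M$ for all $i$. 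The number of such differences is $(\#\{n : \text{box}(n)=B\}) - 1 \ge (N+1)/M^k - 1 \ge N/M^k - 1$, which is the claimed lower bound.

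The one genuinely delicate point — though it is minor — is matching the statement's use of the closed inequality $\|n\lambda\|\le M^{-1}$ with the half-open boxes: if two points lie in the same half-open subcube of side $1/M$, their coordinatewise differences are strictly less than $1/M$ in absolute value when reduced mod $1$, hence certainly $\le 1/M$, so the distance to the nearest integer is $\le 1/M$. I should also note the count is only interesting when $M^k < N$ (otherwise the bound $N/M^k - 1$ may be negative and the statement is vacuous), and that the differences produced are automatically $\ge 1$ and $\le N$ as required. I expect no real obstacle here; the entire argument is the Dirichlet pigeonhole principle applied to the torus $(\mathbb{R}/\mathbb{Z})^k$, and the proof is a few lines once the box decomposition is set up correctly.
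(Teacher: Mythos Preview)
Your proposal is correct and takes essentially the same approach as the paper: both arguments partition the torus $(\mathbb{R}/\mathbb{Z})^k$ into $M^k$ subcubes of side $1/M$, apply the pigeonhole principle to the points $n\mathbf{v}$, and then take differences with the smallest index in the crowded box. The only cosmetic difference is that you include $n=0$ (giving $N+1$ points rather than $N$), which yields the same bound; your labeling $n_2 < n_1 < \cdots$ for the ordered list is slightly awkward notation but the argument is clear.
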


\begin{proof}
Consider the integer multiples $n \textbf{v}$, with $1\le n\leq N$, of the vector $\textbf{v}=(\lambda_1,\ldots,\lambda_k) \in \mathbb R^k / \mathbb Z^k.$ If we divide $\mathbb R^k / \mathbb Z^k$ into $M^k$ cubes of side length $M^{-1}$, then one of these cubes will contain $s\ge N/M^k$ multiples of~$\textbf{v}$. If the integers producing these multiples are $m_1 < m_2< \dots < m_s$, then we have
 \[
 \{ n\leq N \colon \|n \lambda \| \leq M^{-1} \text{ for all } \lambda \in \Lambda \} \supset \{ m_2-m_1 , m_3-m_1, \dots m_s -m_1 \},
 \]
and the cardinality of the right-hand side is at least $N/M^k-1$.
\end{proof}

\begin{proposition}
\label{corollary GRH false hard}
Fix $\ep>0$ sufficiently small, and let $f,g\colon \mathbb N\rightarrow \mathbb R_{>0}$ be two functions such that $f$ is minorized by a large enough constant, and such that $(\eps^{-1}\log q)/\phi(q) \le g(q) \leq \log q$. If GRH is true, then for each sufficiently large~$q$ there exists $x_q$ satisfying 
$$\log_2 x_q \asymp_\ep \phi(q) f(q)g(q)\Big(1+\frac{\log (f(q))}{\log q}\Big)$$ with the property that
$$ \frac{G(x_q;q)}{x_q} \geq \Big(\frac 14-2\ep \Big) \frac{g(q)(\log(q^2f(q)))^2}{\log q} .  $$
Under the slightly weaker assumption that GRH$(\chi)$ is true for every nonprincipal character~$\chi$, the same statement holds with $G(x_q;q)$ replaced by $V_{\Lambda}(x_q;q)$.
\end{proposition}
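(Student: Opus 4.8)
The plan is to combine the evaluation of $R_\delta(y)$ from Lemma~\ref{lemma final expression for R} with the counting argument of Lemma~\ref{lemma diophantine} to produce a single large value of $|R_\delta(y)|$ (hence of $|S_\delta(y)|$ via Lemma~\ref{lemma explicit formula precise}(a)), and then to convert a large value of the \emph{average} $S_\delta(y)$ over a short interval into a large value of $\theta(x,\chi)$ for some $x$ in that interval, finally invoking the Parseval identity~\eqref{equation parseval} to get a lower bound on $G(x;q)$. Concretely, I would first choose the set $\F_q$ via Lemma~\ref{lemma subset with big conductors} with an appropriate $w(q)$ (chosen so that $E_q/\Phi_q$ and the deficiency $\phi(q)-\Phi_q$ are negligible compared to the main term $\tfrac12\Phi_q\log(q^2\delta^{-1})$); the parameters $\delta$, $T$, $M$ and $N$ will be set in terms of $q$ and the given functions $f,g$. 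Roughly, $\delta^{-1}$ should be a small power of $f(q)$ so that $\log(q^2\delta^{-1}) \asymp \log(q^2 f(q))$, while $T$ is taken a bit larger than $e\delta^{-1}$ and $M$ large enough (e.g.\ $M \asymp T^2$) to kill the $\log(qT)\log T / M$ error; then $k = N(T,\F_q)/2 \ll \Phi_q T\log(qT)$ is the number of frequencies $\gamma_\chi\delta/2\pi$ to synchronize, and Lemma~\ref{lemma diophantine} gives an admissible $n \le N$ provided $N \ge 2M^k$, i.e.\ $\log N \asymp k\log M \asymp \phi(q) T\log(qT)\cdot\log T$, which after unwinding gives $\log_2 x_q = \tfrac12 y \asymp \phi(q) f(q) g(q)(1 + \log f(q)/\log q)$ as claimed (here the factor $g(q)$ enters through the dependence of $\delta$, $T$ on $g$; I expect $\delta^{-1} \asymp f(q)^{g(q)/\log q}$ or similar so that $\log(q\delta^{-1}) \asymp g(q)\log(q f(q))/\log q \cdot \log q$ — the precise bookkeeping is where care is needed).

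Next I would apply Lemma~\ref{lemma diophantine} with $\Lambda = \{\gamma_\chi\delta/2\pi : 0\le\gamma_\chi\le T,\ \chi\in\F_q\}$ to find an admissible $n$ in a prescribed dyadic range, so that $y = (n+1)\delta$ satisfies $\log_2 x_q \asymp \phi(q) f(q) g(q)(1+\log f(q)/\log q)$; the fact that Lemma~\ref{lemma diophantine} produces \emph{many} admissible $n$ (at least $N/M^k - 1 \ge 1$ of them, arising as differences $m_i - m_1$) is what lets us localize $y$, since we can take $N$ just barely large enough. With this $y$ fixed, Lemma~\ref{lemma final expression for R} gives $R_\delta(y) = -\tfrac12 e^{y/2}\Phi_q\log(q^2\delta^{-1})(1 + o(1))$, the error terms being $o(1)$ relative to the main term by the parameter choices, and then~\eqref{equation S and R} gives the same asymptotic for $S_\delta(y)$ since $e^{y/2}\Phi_q = o(e^{y/2}\Phi_q\log(q^2\delta^{-1}))$.

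Then comes the transfer from the short-interval average to a pointwise value: since
$$S_\delta(y) = \frac{1}{2\delta}\int_{y-\delta}^{y+\delta}\sum_{\chi\in\F_q}(\theta(e^t,\chi) - \one_{\chi=\chi_0}e^t)\,dt$$
is very negative (of size $\asymp e^{y/2}\Phi_q\log(q^2\delta^{-1})$), there must exist $t\in[y-\delta,y+\delta]$ with $\sum_{\chi\in\F_q}(\theta(e^t,\chi) - \one_{\chi=\chi_0}e^t) \ll -e^{t/2}\Phi_q\log(q^2\delta^{-1})$ (absorbing the harmless factor $e^{\pm\delta/2} = 1 + O(\delta)$); set $x_q = e^t$. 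Removing the single principal-character term (which contributes $\theta(x_q,\chi_0) - x_q = O(x_q^{1/2}\log^2 x_q \cdot q^{-1/2})$-type error under GRH, negligible) and applying Cauchy--Schwarz to $\sum_{\chi\in\F_q}|\theta(x_q,\chi) - \one_{\chi=\chi_0}x_q|$ gives
$$\sum_{\chi\in\F_q}|\theta(x_q,\chi) - \one_{\chi=\chi_0}x_q|^2 \ge \frac{1}{\Phi_q}\Big(\sum_{\chi\in\F_q}|\theta(x_q,\chi) - \one_{\chi=\chi_0}x_q|\Big)^2 \gg x_q\Phi_q(\log(q^2\delta^{-1}))^2,$$
and extending the sum to all $\chi\bmod q$ only increases it, so by~\eqref{equation parseval},
$$\frac{G(x_q;q)}{x_q} = \frac{1}{x_q\phi(q)}\sum_{\chi\bmod q}|\theta(x_q,\chi) - \one_{\chi=\chi_0}x_q|^2 \ge \frac{\Phi_q}{\phi(q)}(\log(q^2\delta^{-1}))^2\big(\tfrac14 + o(1)\big).$$
With $\delta$ chosen so that $\log(q^2\delta^{-1}) = (1+o(1))\cdot g(q)^{1/2}\log(q^2 f(q))/(\log q)^{1/2}$ — this being the correct calibration to produce the stated $g(q)(\log(q^2 f(q)))^2/\log q$ — and $\Phi_q/\phi(q) = 1 + o(1)$, this yields $G(x_q;q)/x_q \ge (\tfrac14 - 2\ep) g(q)(\log(q^2 f(q)))^2/\log q$ for $q$ large. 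The $V_\Lambda$ statement follows identically using $R_\delta(y)$ directly in place of $S_\delta(y)$, i.e.\ bypassing~\eqref{equation S and R}, since GRH$(\chi)$ for nonprincipal $\chi$ is all that Lemma~\ref{lemma final expression for R} needs once $\chi_0\notin\F_q$ or its contribution is handled separately.

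The main obstacle I anticipate is the parameter calibration, i.e.\ choosing $\delta$, $T$, $M$, $N$, and $w(q)$ as functions of $q$, $f(q)$, $g(q)$ so that (i) \emph{all} the error terms in Lemma~\ref{lemma final expression for R} — namely $\log(qT)/(\delta T)$, $(E_q/\Phi_q + 1)\log(T\delta)$, $y\log q \cdot e^{-y/2}$, $\log(qT)\log T/M$, $\delta^{1/2}\log(qT)$, and $\log q/\log_2 q$ — are genuinely $o(\log(q^2\delta^{-1}))$ uniformly in the relevant range of $g$; (ii) the exponent count $\log N \asymp k\log M$ reproduces exactly the claimed size $\log_2 x_q \asymp_\ep \phi(q)f(q)g(q)(1 + \log f(q)/\log q)$, including the somewhat delicate $(1 + \log f(q)/\log q)$ correction factor, which forces $T$ to depend on $f$ logarithmically; and (iii) the $\delta^{1/2}\log(qT)$ error term — which is the most dangerous since $\log(qT)$ and $\log(q\delta^{-1})$ are comparable — is controlled, requiring $\delta$ not too large, i.e.\ $f(q)$ large enough, which is exactly why the hypothesis demands $f$ minorized by a large constant. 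Getting the constant in $\log(q^2\delta^{-1})$ to come out as $g(q)^{1/2}\log(q^2 f(q))/(\log q)^{1/2}$ — rather than some other combination — is the crux; everything else is the by-now-standard Littlewood/Rubinstein--Sarnak machinery.
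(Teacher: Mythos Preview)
Your overall architecture --- Lemma~\ref{lemma final expression for R} plus Lemma~\ref{lemma diophantine}, then pointwise extraction, then Cauchy--Schwarz and Parseval --- matches the paper exactly. The genuine gap is the role of $g(q)$: you take $\F_q$ to be essentially all of $\mathcal X_q$ (so $\Phi_q/\phi(q)=1+o(1)$) and then try to manufacture the factor $g(q)/\log q$ by calibrating $\delta$ so that $\log(q^2\delta^{-1})=(g(q)/\log q)^{1/2}\log(q^2f(q))$. This cannot work across the stated range. For instance, when $g(q)$ is near its lower bound $\ep^{-1}(\log q)/\phi(q)$ and $f(q)$ is just a large constant (both permitted by the hypotheses), your calibration forces $\log(q^2\delta^{-1})\asymp \phi(q)^{-1/2}\log q<2\log q$, i.e.\ $\delta>1$, which is out of range for Lemma~\ref{lemma final expression for R}. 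More structurally, with $\Phi_q\asymp\phi(q)$ the frequency count in Lemma~\ref{lemma diophantine} is $\asymp\phi(q)T\log(qT)$, and no choice of $\delta$ simultaneously produces the lower bound $(g(q)/\log q)(\log(q^2f(q)))^2$ and the localization $\log_2 x_q\asymp\phi(q)f(q)g(q)(1+\log f(q)/\log q)$.

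The paper instead uses $g(q)$ to determine the \emph{size} of $\F_q$: after obtaining the large-conductor set $\mathcal G_q$ from Lemma~\ref{lemma subset with big conductors} (with $w(q)=(\log_2 q)^{-1}$), one extracts any subset $\F_q\subset\mathcal G_q$ satisfying~\eqref{equation property 1 F_q} with
\[
\Phi_q=2\Big\lfloor\frac{\phi(q)g(q)}{2\log q}\big(1-K(\log_2 q)^{-2}\big)\Big\rfloor,
\]
so that $\Phi_q/\phi(q)\sim g(q)/\log q$. Then $\delta$ is tied only to $f$, namely $\delta=f(q)^{-1}\log_2 f(q)$, with $T=C/\delta$ and $M=C\log T$ (not $M\asymp T^2$; the smaller $M$ keeps $\log N$ from being inflated by an extra $\log(\delta^{-1})$). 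With these choices $\log(q^2\delta^{-1})\asymp\log(q^2f(q))$ directly, the Cauchy--Schwarz step gives
\[
\frac{G(x_q;q)}{x_q}\ge\frac{\Phi_q}{\phi(q)}\big(\log(q^2\delta^{-1})\big)^2\Big(\frac{1-2\ep}{4}\Big)\sim\frac{g(q)}{\log q}\big(\log(q^2f(q))\big)^2\Big(\frac{1-2\ep}{4}\Big),
\]
and the frequency count $\asymp\Phi_q T\log(qT)\log M\asymp(\phi(q)g(q)/\log q)\cdot f(q)\cdot\log(qf(q))$ delivers exactly the claimed $\log_2 x_q$, including the factor $(1+\log f(q)/\log q)$.
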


\begin{proof}
We apply Lemma~\ref{lemma subset with big conductors} with $w(q)= (\log_2 q)^{-1}$. We deduce the existence of a set~$\mathcal G_q$ of characters modulo $q$ for which $|\mathcal G_q|\geq \phi(q)(1-K(\log_2 q)^{-2})$, where $K>0$ is an absolute constant, such that $\log q_\chi=\log q+O((\log_2 q)^2)$ for each $\chi\in \mathcal  G_q$. We can assume that all elements of $\mathcal  G_q$ are complex characters, since there are $\ll 2^{\omega(q)}\ll \sqrt q$ real characters modulo~$q$. Since $q_{\chi} = q_{\overline{\chi}}$, we can also assume that~$\mathcal  G_q$ has the property~\eqref{equation property 1 F_q}.
We extract a subset $\F_q \subset\mathcal  G_q$ of cardinality $$\Phi_q =2\Big\lfloor \frac{\phi(q) g(q)}{2\log q} (1-K(\log_2 q)^{-2})\Big\rfloor  $$ for which~\eqref{equation property 1 F_q} holds, where the right-hand side is at least~$2$ when~$q$ is sufficiently large.
By Lemma~\ref{lemma subset with big conductors}, the error term~\eqref{equation definition E_q} then satisfies the bound
$$ E_q \ll \Phi_q (\log_2 q)^2.$$

We now apply Lemma \ref{lemma diophantine} to the set $\Lambda = \{ 0\leq \gamma_{\chi} \leq T, \chi \in \F_q \}$. For $T$ large enough, it follows from equation~\eqref{equation von mangoldt} that the set $S$ of values of $n\leq N$ for which $ \|n\delta\gamma_{\chi} /2\pi\| < (2\pi M)^{-1}$ has at least $NM^{ (-1+o(1))\Phi_q T \log (qT) /2\pi  }-1$ elements. Taking $N= M^{\Phi_q T \log (qT)/\pi  }$, we obtain that $S\cap [N^{\frac 13},N ] \neq \varnothing$.
Then, 
Lemma \ref{lemma final expression for R} implies that for $y=\delta(n+1)$ with $n \in S$,
\begin{multline*}
S_{\delta}(y)= -\frac{e^{\frac y2} \Phi_q \log(q^2\delta^{-1})}{2} \\+ O\Big(e^{\frac y2}\Phi_q \Big(\frac{\log(qT)}{\delta T}+ (\log_2 q)^2\log(\delta T) +\frac{y\log q}{e^{\frac y2}}+ \frac{\log(qT)\log T}M  + \delta^{\frac 12} \log(qT)+\frac{\log q}{\log_2 q}\Big) \Big).  
 \end{multline*}
If $C=C(\ep)>0$ is large enough, then picking $T=C/\delta$, $M=C\log T$ and $0<\delta<C^{-2}$ will result, for $q$ and $y$ large enough, in the bound 
\begin{equation}
  S_{\delta}(y) \leq -e^{\frac y2} \Phi_q \log(q^2\delta^{-1})\Big( \frac {1-3\ep}4\Big)^{\frac 12}. 
  \label{equation Rdelta very negative}
\end{equation}
Now $S_{\delta}(y)$ is the average of the function $\sum_{\chi \in \F_q}( \theta(e^t,\chi)-\one_{\chi=\chi_0}e^t)$ over the short interval $e^t \in [e^{y-\delta},e^{y+\delta}]$, and hence this function itself has such a large negative value in that interval. In other words, there exists a value $x=e^y(1+O(\delta))$ such that
\begin{equation}  \label{omega for psi x chi}
\sum_{\chi \in \F_q} (\theta(x,\chi)-\one_{\chi=\chi_0} x)  \leq -x^{\frac 12} \Phi_q \log(q^2\delta^{-1})\Big( \frac {1-2\ep}4\Big)^{\frac 12},
\end{equation}
since $C$ is large enough in terms of $\eps$.
 Using positivity in~\eqref{equation parseval} and applying 
the Cauchy-Schwarz inequality, we obtain that 
\begin{align} \notag
 G(x;q) &\geq  \frac 1{\phi(q)}\sum_{\chi  \in \F_q} |\theta(x,\chi)-\one_{\chi=\chi_0}x|^2 \geq \frac 1{\phi(q)\Phi_q}\Big|\sum_{\chi  \in \F_q}(\theta(x,\chi)-\one_{\chi=\chi_0}x)\Big|^2 \\&\geq x  \frac{\Phi_q}{\phi(q)}(\log (q^2 \delta^{-1}))^2\Big( \frac {1-2\ep}4\Big). \label{CS on H GRH}
\end{align}
Since $y=\delta(n+1)$ with $n\in [N^{\frac 13},N]$, it follows that the associated $x$ satisfies
\begin{equation} \label{use this for fixed chi}
\log_2 x \asymp_{\ep} \delta^{-1} \log_2(\delta^{-1}) \Phi_q \log (q\delta^{-1}).
\end{equation}
The result follows from 
taking $\delta = f(q)^{-1}\log_2 f(q)$.

The proof is identical for $V_{\Lambda}(x;q)$. 
\end{proof}

We are now ready to prove our main technical theorem, at which point we will be able to deduce Theorems~\ref{theorem main}, \ref{theorem main precise}, \ref{theorem fixed q}, and~\ref{theorem average over q}.

\begin{theorem}
\label{th:main}
Assume GRH, and fix $\ep>0$ small enough.
\begin{enumerate}
\item If $h(x)$ is an increasing function satisfying
\begin{equation}
\ep \frac{\log_2 x}{ \log_3 x} \leq h(x) \leq (\log x)^{\frac{\ep^2}3}
\label{equation condition on h GRH theorem}
\end{equation}
for all $x\geq e^3$, then for all moduli~$q$ there exist associated values~$x_q$ satisfying 
$ h(\exp((\log x_q)^{c_1\ep^{-1}})) \leq \phi(q) \leq  h(\exp((\log x_q)^{c_2\ep^{-1}})) $ such that
$$ G(x_q;q) \gg_\ep   x_q\log q \cdot  \frac{ \log_2 x_q }{\phi(q)} . $$
\item If $h(x)$ is a function with the property~\eqref{nice function} and satisfying
\begin{equation}
h(x)  \leq  \ep \frac{\log_2 x}{\log_3 x}
\label{equation stronger condition on h}
\end{equation}
for all $x\geq e^3$, then for all sufficiently large moduli~$q$ there exist associated values~$x_q$ satisfying $\phi(q)\asymp_\ep h(x_q)$ such that
$$ G(x_q;q) \geq \Big(\frac 14-\ep\Big) x_q (\log q + \log_3 x_q)^2. $$
\end{enumerate}
These results hold with $V_{\Lambda}(x_q;q)$ in place of $G(x_q;q)$, under the weaker assumption that GRH$(\chi)$ is true for every nonprincipal character~$\chi$.
\end{theorem}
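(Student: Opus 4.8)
The plan is to derive both parts from Proposition~\ref{corollary GRH false hard} by making the right choices of the two auxiliary functions $f,g$: in part~(a) we let $f$ be a large enough constant and let $g$ sweep across its admissible window, whereas in part~(b) we set $g(q)=\log q$ (so that essentially all characters are synchronized, $\Phi_q\asymp\phi(q)$) and let $f$ vary.

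\emph{Part~(b).} With $g(q)=\log q$, Proposition~\ref{corollary GRH false hard} produces for each $f_q\geq f_0$ a value $x_q$ with $\log_2 x_q\asymp_\ep\phi(q)f_q(\log q+\log f_q)$ and $G(x_q;q)/x_q\geq(\tfrac14-2\ep)(\log(q^2 f_q))^2$. As $f_q$ runs over $[f_0,\infty)$ the quantity $\log_2 x_q$ sweeps out, up to bounded multiplicative gaps, the interval $[\gg_\ep\phi(q)\log q,\infty)$; since $h(x)\leq\ep\log_2 x/\log_3 x$ we have $h(\exp\exp(c\phi(q)\log q))\ll_\ep\phi(q)$, so the target $h^{-1}(\phi(q))$ lies in this reachable range, and property~\eqref{nice function} (used with $A$ equal to the ratio of the implied constants in $\asymp_\ep$) guarantees that although the $\asymp_\ep$-imprecision can move $x_q$ as much as $x\mapsto x^{\log x}$, it only moves $h(x_q)$ by a bounded factor. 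Hence there is a choice of $f_q$ with $\phi(q)\asymp_\ep h(x_q)$. For that choice, $\log\phi(q)=\log q-O(\log_3 q)$ and $q/\phi(q)\ll\log_2 q$ give $\log_3 x_q=\log(qf_q)+O(\log_2(qf_q))$, so $\log q+\log_3 x_q=(1+o(1))\log(q^2 f_q)$; substituting into the proposition's bound — which we invoke from the start with $\ep$ replaced by a suitable small multiple of itself — yields $G(x_q;q)\geq(\tfrac14-\ep)x_q(\log q+\log_3 x_q)^2$ for $q$ large.

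\emph{Part~(a).} Take $f(q)=f_0$ a fixed large enough constant, so $\log(q^2 f_0)=2\log q+O(1)\asymp\log q$ (and only boundedly many frequencies per character get synchronized). Letting $g=g_q$ range over $[\ep^{-1}\log q/\phi(q),\log q]$ we get $\log_2 x_q\asymp_\ep\phi(q)g_q$, sweeping out (up to bounded multiplicative gaps) the interval $[\asymp_\ep\log q,\asymp_\ep\phi(q)\log q]$, and the proposition's bound becomes $G(x_q;q)/x_q\gg_\ep g_q\log q\asymp_\ep\log q\cdot\log_2 x_q/\phi(q)$ — exactly the asserted inequality. It remains to pick $g_q$ so that $x_q$ lies in the prescribed window $h(\exp((\log x_q)^{c_1\ep^{-1}}))\leq\phi(q)\leq h(\exp((\log x_q)^{c_2\ep^{-1}}))$, which amounts to placing $\log_2 x_q$ in an interval of multiplicative width $c_2/c_1$ about $\log_2 h^{-1}(\phi(q))$; the bounds $\ep\log_2 x/\log_3 x\leq h(x)\leq(\log x)^{\ep^2/3}$ force $\log_2 h^{-1}(\phi(q))\in[\asymp_\ep\log q,\lesssim_\ep\phi(q)\log q]$, which is inside the reachable range, and since $\log_2 x_q$ varies boundedly-multiplicatively with $g_q$ a suitable $g_q$ exists. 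The constants $c_1<c_2$ come out of tracking the ($\ep$-dependent) implied constants throughout. In both parts the $V_\Lambda$ statements follow from the same argument, using the $V_\Lambda$ half of Proposition~\ref{corollary GRH false hard} (which needs only GRH$(\chi)$ for nonprincipal $\chi$). The main obstacle is exactly this parameter-selection step — steering the $x_q$ delivered by the proposition into the narrow $h$-window — relying in part~(b) on property~\eqref{nice function} to absorb the $\asymp_\ep$ slack and in part~(a) on the flexibility built into the exponents $c_1,c_2$.
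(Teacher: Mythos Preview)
Your proposal is correct and follows essentially the same approach as the paper: both parts are deduced from Proposition~\ref{corollary GRH false hard} with the same parameter specializations (constant $f$, variable $g$ in~(a); $g=\log q$, variable $f$ in~(b)), and the verification that the resulting $x_q$ lands in the required $h$-window uses the same ingredients (the two-sided bounds on $h$ in~(a), property~\eqref{nice function} in~(b)). The only cosmetic difference is that the paper writes down explicit formulas for $f(q),g(q)$ in terms of $h^{-1}(\phi(q))$, whereas you argue by sweeping that a suitable choice exists---both lead to the same conclusion.
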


\begin{proof}
Under the condition~\eqref{equation condition on h GRH theorem}, we apply Proposition~\ref{corollary GRH false hard} with $f(q)$ equal to a sufficiently large absolute constant, and with $g(q)=\ep\log_2 (h^{-1}(\phi(q)))/(2\phi(q)) $. 
Note that the inequality $h(\exp( q^{2\ep^{-1}\phi(q)})) \geq \phi(q)$ holds for~$q$ large enough, and thus $g(q) \leq \log q.$ Moreover, $ h(\exp(q^{2\ep^{-2}})) \leq q^{ \frac 23} \leq \phi(q)$, and hence $g(q) \geq  (\eps^{-1}\log q) / \phi(q)$. Therefore, the hypotheses of Proposition~\ref{corollary GRH false hard} are satisfied. We deduce the existence of a sequence $\{x_q\}_{q\geq q_0}$ such that $\log_2 x_q \asymp \phi(q) g(q)$ and
$$ \frac{G(x_q;q)}{x_q} \gg  g(q)\log q \asymp \frac{\log_2 x_q \cdot \log q }{\phi(q)},  $$
establishing part~(a).

On the other hand, under the conditions~\eqref{nice function} and~\eqref{equation stronger condition on h}, we make the choices $g(q)=\log q$ and 
$$f(q): =\begin{cases}
\displaystyle \frac{\log_2 (h^{-1}(\phi(q)))}{\phi(q)\log q_{\mathstrut}}, & \text{ if } \log_2 (h^{-1}(\phi(q))) \leq q^2, \\
\displaystyle  \frac{\log_2 (h^{-1}(\phi(q))) }{\phi(q)\log_3 (h^{-1}(\phi(q)))^{\mathstrut}}, & \text{ otherwise}.
\end{cases}$$
The condition~\eqref{equation stronger condition on h} ensures that $f(q)$ is minorized by a large enough positive constant when~$q$ is sufficiently large. Moreover, one can check that 
$$\phi(q) f(q) \log q \cdot \Big(1+\frac{\log (f(q))}{\log q}\Big) \asymp \log_2 (h^{-1}(\phi(q))).$$
Hence, Proposition~\ref{corollary GRH false hard} (applied with $\frac\ep4$ in place of~$2\ep$) yields a real number $x_q$ satisfying $\log_2 x_q \asymp \log_2 (h^{-1}(\phi(q)))$ with the property that if $\log_2 (h^{-1}(\phi(q))) > q^2$,
\begin{align*}
\frac{G(x_q;q)}{x_q} &\ge \Big( \frac14-\frac\ep2 \Big) (\log(q^2 f(q)))^2 \\
&\ge \Big( \frac14-\frac\ep2 \Big) \Big(\log \Big( \frac{q \log_2 (h^{-1}(\phi(q)))}{\log_3 (h^{-1}(\phi(q)))} \Big)\Big)^2 \ge \Big( \frac14-\ep \Big) \big(\log \big( q \log_2 (h^{-1}(\phi(q))) \big)\big)^2,
\end{align*}
when~$q$ is sufficiently large, and similarly when $\log_2 (h^{-1}(\phi(q))) \leq q^2$.
Moreover, the estimate $\log_2 x_q \asymp \log_2 (h^{-1}(\phi(q)))$ combined with the property~\eqref{nice function} implies that $\phi(q)\asymp h(x_q)$, establishing part~(b).
\end{proof}

\begin{proof}[Proof of Theorems~\ref{theorem main} and~\ref{theorem main precise}]
We prove Theorem~\ref{theorem main precise} which implies Theorem~\ref{theorem main}. If we assume that GRH is false, then the desired result for $G(x;q)$ follows from Proposition~\ref{prop:GRHfalse}. On the other hand, if we assume that GRH holds, then the desired result follows from applying Theorem~\ref{th:main}, which holds for all moduli~$q$, and then restricting to the positive proportion of moduli~$q$ that satisfy $\phi(q) \ge \frac12q$, say. (The constant $\frac12$ is unimportant here; any constant less than $1$ suffices, since we know~\cite[Theorem 1, \S 8]{Sch2} (see also~\cite[Section~5]{Sch}) that the limiting distribution function $\phi(q)/q$ is strictly increasing on~$(0,1)$.)

The proof is similar for $V_{\Lambda}(x;q)$, and the Riemann hypothesis for principal characters~$\chi_0$ is never needed (see the formulas~\eqref{equation parseval}).
\end{proof}

\begin{proof}[Proof of Theorem~\ref{theorem fixed q}]
If GRH$(\chi)$ is false, then the desired result for $\theta(x,\chi)$ follows from Proposition~\ref{proposition GRH false single chi}. If GRH$(\chi)$ is true, then we argue analogously to the proof of Proposition~\ref{corollary GRH false hard}.

Take $\F_q=\{\chi,\overline{\chi}\}$ in Lemma~\ref{lemma final expression for R}, as well as  $T=C/\delta$, $M=C\log T$ and $\delta< C^{-2}$ with $C$ large enough. Take moreover $N= M^{   T \log (qT) /\pi }$ in Lemma~\ref{lemma diophantine}. Hence, there exists $ n\in [N^{\frac 13},N]$ such that $y=(n+1)\delta$ has the property that 
$$
S_{\delta}(y)\leq  - \frac 12  e^{\frac y2}  \log(\delta^{-1}) .
$$
Since $ \log_2 x= \log y \asymp_q \delta^{-1} \log(\delta^{-1})\log_2(\delta^{-1}), $ we have that
$ e^{\frac y2}  \log(\delta^{-1})\asymp_q  x^{\frac 12} \log_3 x$ and the lower bound~\eqref{equation omega result fixed chi} follows. The proof of the lower bound~\eqref{equation omega result fixed q} is similar, this time taking~$\F_q$ to be the set of all characters modulo~$q$ and applying equation~\eqref{CS on H GRH}.  
\end{proof}

\begin{proof}[Proof of Theorem~\ref{theorem average over q}]
If $Q(x)$ is bounded, and thus is eventually constant $Q(x)=Q_0$, then the result follows from the bound
$$ \sum_{ Q_0<q \leq 2Q_0 } G(x;q) \geq  G(x;2Q_0) $$
and Theorem~\ref{theorem fixed q}. 

We now assume that $Q(x)$ tends to infinity. If GRH is false, we let $\chi_e \bmod q_e$ be a primitive character for which $L(s,\chi_e)$ has a non-trivial zero off the critical line. Then, for $x$ large enough the interval $(Q(x),2Q(x)]$ will contain a multiple $q_j$ of $q_e$. Hence, if $\chi_j \bmod q_j$ is the character induced by $\chi_e$, we have that
$$ \sum_{ Q(x)<q \leq 2Q(x) } G(x;q) \geq  \frac{|\theta(x,\chi_j)-\one_{\chi_j=\chi_0} |^2}{\phi(q)}, $$
and the rest of the proof proceeds as in the proof of Proposition~\ref{proposition Hooley implies GRH}. 

If GRH is true, then we argue as in the proof of Proposition~\ref{corollary GRH false hard}. We apply Lemma~\ref{lemma diophantine} to the set $\Lambda = \{ 0\leq \gamma_{\chi} \leq T\colon \chi \bmod q,\, Q< q\leq 2Q \}$. Taking $N= M^{ 2 Q^2 T \log (QT)/\pi  }$, we see that the set $S$ of values of $n\leq N$ for which $ \|n\delta_Q\gamma_{\chi} /2\pi\| < (2\pi M)^{-1}$ has at least one element exceeding~$N^{\frac 19}$. 
Then we set $T=C/\delta_Q$, $M=C\log T$ and $\delta_Q\leq C^{-2}$ with $C$ large enough in
Lemma~\ref{lemma final expression for R}, and obtain that for $y=\delta_Q(n+1)$ with $n \in S$,
$$
S_{\delta_Q}(y)\leq  -\Big( \frac 12-\ep\Big)  e^{\frac y2} \phi(q) \log(q^2\delta_Q^{-1}) .
$$
Hence, as in~\eqref{CS on H GRH}, for each large enough $Q$ there exists $x_Q$ such that
$$ \frac 1Q \sum_{ Q<q\leq 2Q} G(x_Q;q) \gg  \frac {x_Q}Q \sum_{ Q<q\leq 2Q}  (\log (q^2\delta_Q^{-1}))^2  \gg x_Q (\log (Q^2\delta_Q^{-1}))^2,  $$
and for which
$$ \log_2 x_Q \asymp Q^2 \delta_Q^{-1} \log(Q\delta_Q^{-1}). $$
The rest of the proof proceeds as in the proof of Theorem~\ref{th:main}.
\end{proof}

\section*{Acknowledgements}

The work of the first author was supported at the University of Ottawa by an NSERC discovery grant and at the Institut Math\'ematique de Jussieu by a postdoctoral fellowship from the Fondation Sciences Math\'ematiques de Paris, and was initiated at the University of Michigan. The second author was supported in part by a Natural Sciences and Engineering Research Council of Canada Discovery Grant. We would like to thank R\'egis de la Bret\`eche for helpful discussions and for his comments.


\begin{thebibliography}{99}

\bibitem[BaF]{BaFr} R. Baker and T. Freiberg, \emph{Sparser variance for primes in arithmetic progression.} Monatsh. Math. \textbf{187} (2018), no. 2, 217--236.

\bibitem[BrF]{BrFi}  R. de la Bret\`eche and D. Fiorilli, \emph{Moments of moments of primes in arithmetic progressions}. Preprint.

\bibitem[Ba]{B} M. B. Barban, \emph{The large sieve method and its applications in the theory of numbers.} Uspekhi Mat. Nauk 21 (1966) 51--102 (Russian), Russian Math. Surveys \textbf{22} (1966) 49--103.

\bibitem[BW]{BW} J. Br\"udern and T. D. Wooley, \emph{Sparse variance for primes in arithmetic progression.} Q. J. Math. \textbf{62} (2011), no. 2, 289--305. 

\bibitem[CLLR]{CLLR} V. Chandee, Y. Lee, S.-C. Liu, M. Radziwi\l\l, \emph{Simple zeros of primitive Dirichlet $L$-functions and the asymptotic large sieve.} Q. J. Math. \textbf{65} (2014), no. 1, 63--87. 

\bibitem[D]{Dav} Harold Davenport, \emph{Multiplicative number theory.} Third edition. Revised and with a preface by Hugh L. Montgomery. Graduate Texts in Mathematics, 74. Springer-Verlag, New York, 2000. xiv+177 pp.

\bibitem[DH]{DH} H. Davenport and H. Halberstam, \emph{Primes in arithmetic progressions.} Michigan Math. J. \textbf{13} (1966) 485--489.


\bibitem[F]{Fi} Daniel Fiorilli, \emph{The Distribution of the Variance of Primes in Arithmetic Progressions.}  Int. Math. Res. Not. IMRN 2015, no. 12, 4421--4448.

\bibitem[FiMa]{FiMa} Daniel Fiorilli and Greg Martin, \emph{Inequities in the Shanks-R\'enyi Prime Number Race: An asymptotic formula for the densities.} J. Reine Angew. Math. \textbf{676} (2013), 121--212.

\bibitem[FiMi]{FiMi} Daniel Fiorilli and Steven J. Miller, \emph{Surpassing the ratios conjecture in the $1$-level density of Dirichlet $L$-functions.} Algebra Number Theory \textbf{9} (2015), no. 1, 13--52.

\bibitem[FG1]{FG} J. B. Friedlander, D. A. Goldston, \emph{Variance of distribution of primes in residue classes.} 
Quart. J. Math. Oxford Ser. (2) \textbf{47} (1996), no. 187, 313--336. 

\bibitem[FG2]{FG2} J. B. Friedlander, D. A. Goldston, \emph{Note on a variance in the distribution of primes.}  Number theory in progress, Vol. 2 (Zakopane-Ko\'scielisko, 1997), 841--848, de Gruyter, Berlin, 1999. 
%
\bibitem[G]{G} P. X. Gallagher, \emph{The large sieve.} Mathematika \textbf{14} (1967) 14--20.

\bibitem[GV]{GV} Daniel A. Goldston and Robert C. Vaughan, \emph{On the Montgomery-Hooley asymptotic formula.} Sieve methods, exponential sums, and their applications in number theory (Cardiff, 1995), 117--142, 
London Math. Soc. Lecture Note Ser., 237, Cambridge Univ. Press, Cambridge, 1997. 


\bibitem[HS]{HS} A.J. Harper, K. Soundararajan, \emph{Lower bounds for the variance of sequences in arithmetic progressions: primes and divisor functions.}
Q. J. Math. \textbf{68} (2017), no. 1, 97--123. 

\bibitem[H1]{Ho1} C. Hooley, \emph{On the Barban-Davenport-Halberstam theorem. I.} Collection of articles dedicated to Helmut Hasse on his seventy-fifth birthday, III.  J. Reine Angew. Math. \textbf{274/275} (1975), 206--223
 
\bibitem[H2]{Ho2} C. Hooley, \emph{On the Barban-Davenport-Halberstam theorem. II.} J. London Math. Soc. (2) \textbf{9} (1974/75), 625--636. 

 


\bibitem[H5]{H5} Hooley, C. \emph{On the Barban-Davenport-Halberstam theorem. V.} Proc. London Math. Soc. (3) 33 (1976), no. 3, 535?548.

\bibitem[H7]{Ho7} C. Hooley, \emph{On the Barban-Davenport-Halberstam theorem. VII.} J. London Math. Soc. (2) \textbf{16} (1977), no. 1, 1--8. 

\bibitem[HI]{HoI} C. Hooley, \emph{The distribution of sequences in arithmetic progressions.} Proceedings of the International Congress of Mathematicians (Vancouver, B.C., 1974), Vol. 1, pp. 357--364. Canad. Math. Congress, Montreal, Que., 1975.

\bibitem[HM]{HoM} C. Hooley, \emph{On theorems of Barban-Davenport-Halberstam type.} Number theory for the millennium, II (Urbana, IL, 2000), 195--228, A K Peters, Natick, MA, 2002.
 
 
 \bibitem[HR]{HR}
\newblock C. Hughes and Z. Rudnick, \emph{Linear Statistics of
Low-Lying Zeros of $L$-functions},  Quart. J. Math. Oxford
\textbf{54} (2003), 309--333.


\bibitem[IK]{IwKo} H. Iwaniec and E. Kowalski, Analytic number theory. American Mathematical Society Colloquium Publications, 53. American Mathematical Society, Providence, RI, 2004. xii+615 pp. ISBN: 0-8218-3633-1 

\bibitem[KP]{KP} J. Kaczorowski and J. Pintz, \emph{Oscillatory properties of arithmetical functions. I.} Acta Math. Hungar. \textbf{48} (1986), no. 1-2, 173--185.




\bibitem[KR]{KR} J. P. Keating and Z. Rudnick, \emph{The Variance of the Number of Prime Polynomials in Short Intervals and in Residue Classes}. To appear, International Mathematics Research Notices. 

\bibitem[Lit]{Li} J. E. Littlewood, \emph{Sur la distribution des nombres premiers.} C. R. Acad. Sci. Paris, vol. 158, 1914,  263--266.

\bibitem[Liu]{Liu} H.-Q. Liu, \emph{Lower bounds for sums of Barban-Davenport-Halberstam type.} J. Reine Angew. Math. \textbf{438} (1993), 163--174. 

\bibitem[MP]{MP} H. Mikawa, T. P. Peneva, \emph{Primes in arithmetic progressions to spaced moduli.} Arch. Math. (Basel) \textbf{84} (2005), no. 3, 239--248.

\bibitem[M]{Mo} Hugh L. Montgomery, \emph{Primes in arithmetic progressions.} Michigan Math. J. \textbf{17} (1970) 33--39.

\bibitem[MV]{MV} Hugh L. Montgomery and Robert C. Vaughan, \emph{Multiplicative number theory. I. Classical theory.} Cambridge Studies in Advanced Mathematics, 97. Cambridge University Press, Cambridge, (2007). xviii+552 pp.

\bibitem[Pe]{P} A. Perelli, \emph{The L$^1$ norm of certain exponential sums in number theory: a survey.} Number theory, II (Rome, 1995). Rend. Sem. Mat. Univ. Politec. Torino \textbf{53} (1995), no. 4, 405--418. 

\bibitem[Pi]{Pi} J. Pintz, \emph{Oscillatory properties of the remainder term of the prime number formula.} Studies in pure mathematics, 551--560, Birkh\"auser, Basel, 1983.

\bibitem[RS]{RubSar} Michael Rubinstein and Peter Sarnak, \emph{Chebyshev's bias.} Experiment. Math. \textbf{3} (1994), no. 3, 173-197. 

\bibitem[SP]{SP} J.-C. Schlage-Puchta, \emph{Oscillations of the error term in the prime number theorem.} Acta Math. Hungar. \textbf{156} (2018), no. 2, 303--308. 

\bibitem[Sch1]{Sch}
I.~J. Schoenberg, \emph{\"{U}ber die asymptotische Verteilung reeller Zahlen mod $1$}, Math. Z. \textbf{28} (1928), 171--199.

\bibitem[Sch2]{Sch2} I.~J. Schoenberg, \emph{On asymptotic distributions of arithmetical functions.} Trans. Amer. Math. Soc. \textbf{39} (1936), no. 2, 315--330.
 
 
\bibitem[Se]{Se} Atle Selberg, \emph{Contributions to the theory of Dirichlet's $L$-functions.} Skr. Norske Vid. Akad. Oslo. I. (1946). no. 3, 62 pp.

\bibitem[Sm]{S} E. Smith, \emph{A Barban-Davenport-Halberstam asymptotic for number fields.} Proc. Amer. Math. Soc. 138 (2010), no. 7, 2301--2309.

\bibitem[T]{Te} G\'erald Tenenbaum, \emph{Introduction \`a la th\'eorie analytique et probabiliste des nombres.} Deuxi\`eme \'edition, Cours Sp\'ecialis\'es, 1. Soci\'et\'e Math\'ematique de France, Paris, (1995). xv+457 pp.

\bibitem[V]{V} Robert C. Vaughan, \emph{On a variance associated with the distribution of primes in arithmetic progressions.} Proc. London Math. Soc. (3) \textbf{82} (2001), no. 3, 533--553. 





\end{thebibliography}
\end{document}